\documentclass[12pt,reqno,a4paper]{amsart}
\usepackage{enumerate}
\usepackage[totalheight=650pt]{geometry}

\usepackage{amsmath, amssymb}
\usepackage{amsthm}

\usepackage[dvips]{graphicx}                      
\usepackage{psfrag}

\newcommand{\color}[2][{}]{}         


\usepackage{bbm}         

\usepackage{mathrsfs}    
\renewcommand\mathcal\mathscr

\usepackage{accents}     



\newcommand{\hm}[1]{\textbf{*}\leavevmode{\marginpar{\tiny%
$\hbox to 0mm{\hspace*{-0.5mm}$\leftarrow$\hss}%
\vcenter{\vrule depth 0.1mm height 0.1mm width \the\marginparwidth}%
\hbox to 0mm{\hss$\rightarrow$\hspace*{-0.5mm}}$\\\relax\raggedright #1}}}


\numberwithin{equation}{section}




\theoremstyle{plain}            
\newtheorem{theorem}{Theorem}[section]

\newtheorem{proposition}[theorem]{Proposition}
\newtheorem{lemma}[theorem]{Lemma}
\newtheorem{corollary}[theorem]{Corollary}

\theoremstyle{definition}       
\newtheorem{definition}[theorem]{Definition}

\newtheorem{assumption}[theorem]{Assumption}

\theoremstyle{remark}
\newtheorem{remark}[theorem]{Remark}


\newcommand{\Sec}[1]{Section~\ref{sec:#1}}

\newcommand{\Subsec}[1]{Subsection~\ref{ssec:#1}}


\newcommand{\Fig}[1]{Figure~\ref{fig:#1}}


\newcommand{\Thm}[1]{Theorem~\ref{thm:#1}}
\newcommand{\Thms}[2]{Theorems~\ref{thm:#1} and~\ref{thm:#2}}

\newcommand{\Lem}[1]{Lemma~\ref{lem:#1}}

\newcommand{\Cor}[1]{Corollary~\ref{cor:#1}}

\newcommand{\Prp}[1]{Proposition~\ref{prp:#1}}
\newcommand{\Prps}[2]{Propositions~\ref{prp:#1} and~\ref{prp:#2}}

\newcommand{\Rem}[1]{Remark~\ref{rem:#1}}

\newcommand{\Ass}[1]{Assumption~\ref{ass:#1}}
\newcommand{\Asss}[2]{Assumptions~\ref{ass:#1} and~\ref{ass:#2}}


\DeclareMathOperator{\dd}    {d\!}

\DeclareMathOperator{\dom}    {dom}

\DeclareMathOperator{\Aut}    {Aut}

\DeclareMathOperator{\leb}    {{\boldsymbol \lambda}}

\DeclareMathOperator{\ran}    {ran}

\DeclareMathOperator{\supp}   {supp}
\DeclareMathOperator{\vol}    {vol}
\DeclareMathOperator{\tr}     {tr}  

\newcommand{\pot}             {\mathrm {pot}}

\newcommand{\comb}            {\mathrm {comb}}
\newcommand{\diff}            {\mathrm {diff}}
\newcommand{\comp}            {\mathrm {comp}}


\newcommand{\specsymb} {\sigma} 

\newcommand{\spec}[2][{}]   {\specsymb_{\mathrm{#1}}(#2)}


%

\def\XXint#1#2#3{{\setbox0=\hbox{$#1{#2#3}{\int}$}
     \vcenter{\hbox{$#2#3$}}\kern-.5\wd0}}



\newcommand{\R}{\mathbb{R}} 
\newcommand{\C}{\mathbb{C}} 
\newcommand{\N}{\mathbb{N}} 
\newcommand{\Z}{\mathbb{Z}} 
\newcommand{\Torus}{\mathbb{T}} 

\newcommand{\eps}{\varepsilon} 
\renewcommand{\phi}{\varphi}   
\newcommand{\e}{\mathrm e}  
\newcommand{\im}{\mathrm i} 

\newcommand{\wt}{\widetilde}           
\newcommand {\qf}[1]{\mathfrak{#1}}    

\newcommand {\Exp}[2][{}]{\mathbb{E}_{{#1}}{(#2)}} 
\newcommand {\Prob}   {{\mathbb P}}    




\newcommand{\Sobsymb} {\mathsf H}      
\newcommand{\Contsymb} {\mathsf C}     
\newcommand{\Lsymb}    {\mathsf L}     
\newcommand{\lsymb}    {\ell}          
\newcommand{\Lsqrsymb}    {\mathsf L_2}     
\newcommand{\lsqrsymb}    {\ell_2}          



\newcommand{\Cont}[2][{}]{\Contsymb^{#1}({#2})}

\newcommand{\Contc}[2][{}]{\Contsymb_{\mathrm c}^{#1}({#2})}



\newcommand{\Lp}[2][p]{\Lsymb_{#1}({#2})} 


\newcommand{\Lsqr}[2][{}]{\Lsymb_2^{#1}({#2})} 

\newcommand{\lsqr}[2][{}]{\lsymb_2^{#1}({#2})}   

\newcommand{\Linfty}[2][{}]{\Lsymb_\infty^{#1}({#2})} 



\newcommand{\Sob}[2][1]{\Sobsymb^{#1}({#2})}         
\newcommand{\Sobx}[3][1]{\Sobsymb_{{#2}}^{#1}({#3})} 




\newcommand{\abs}[2][{}]{\lvert#2\rvert_{{#1}}}
\newcommand{\abssqr}[2][{}]{\lvert{#2}\rvert^2_{#1}} 
\newcommand{\bigabs}[2][{}]{\bigl\lvert{#2}\bigr\rvert_{#1}}     
\newcommand{\Bigabs}[2][{}]{\Bigl\lvert{#2}\Bigr\rvert_{#1}}     
\newcommand{\Bigabssqr}[2][{}]{\Bigl\lvert{#2}\Bigr\rvert^2_{#1}}
\newcommand{\norm}[2][{}]{\|{#2}\|_{{#1}}}    
\newcommand{\normsqr}[2][{}]{\|{#2}\|^2_{#1}} 


\newcommand{\iprod}[3][{}]{\langle{#2},{#3}\rangle_{#1}}  


\newcommand{\set}[2]{\{ \, #1 \, | \, #2 \, \} }      
\newcommand{\bigset}[2]{\bigl\{ \, #1 \, \bigl|\bigr. \, #2 \, \bigr\} }
\newcommand{\Bigset}[2]{\Bigl\{ \, #1 \, \Bigl|\Bigr. \, #2 \, \Bigr\} }

\newcommand{\map}[3]{ #1 \colon #2 \longrightarrow #3}    

\newcommand{\bd}  {\partial}                
\newcommand{\clo}[1]{\overline{{#1}}} 

\newcommand{\compl}[1]{#1^{\mathrm c}}       
\newcommand{\dcup}{\mathbin{\mathaccent\cdot\cup}}

\DeclareMathOperator*{\bigdcup}{\mathaccent\cdot{\bigcup}}





\newcommand{\1}{\mathbbm 1}                


\newcommand{\Und}{\qquad\text{and}\qquad}


\newcommand{\Dir}{{\mathrm D}}              
\newcommand{\lapl}[2][{}]{\Delta_{{#2}}^{{#1}}}



\newcommand{\de} {\mathord{D}} 

\newcommand{\mc}{\mathcal}
\newcommand{\ul}{\underline}
\newcommand{\orul}[1]{\underrightarrow{#1}}




\newcommand{\RR}{\mathbb{R}}
\newcommand{\CC}{\mathbb{C}}

\newcommand{\ZZ}{\mathbb{Z}}

%



\begin{document}

\title[Continuity of the IDS on random length metric
graphs]{Continuity of the integrated density of states on random
  length metric graphs}

\author[D.~Lenz]{Daniel Lenz}
\author[N.~Peyerimhoff]{Norbert Peyerimhoff} 
\author[O.~Post]{Olaf Post} 
\author[I.~Veseli\'c]{Ivan Veseli\'c}

\address[D.~Lenz]{Friedrich-Schiller-Universit\"at Jena, Fakult\"at f\"ur Mathematik \& Informatik,
Mathematisches Institut, 07737 Jena, Germany} \email{daniel.lenz@uni-jena.de}
\urladdr{www.tu-chemnitz.de/mathematik/mathematische\_physik/}

\address[N.~Peyerimhoff]{Department of Mathematical Sciences, Durham
  University, Science Laboratories South Road, Durham, DH1 3LE, Great
  Britain} \email{norbert.peyerimhoff@durham.ac.uk}
\urladdr{www.maths.dur.ac.uk/~dma0np/}

\address[O.~Post]{Institut f\"ur Mathematik, SFB 647 ``Space -- Time
  -- Matter'', Humboldt-Universit\"at zu Berlin, Rudower Chaussee~25,
  12489 Berlin, Germany} \email{post@math.hu-berlin.de}
\urladdr{www.math.hu-berlin.de/\protect{\char126}post/}

\address[I.~Veseli\'c]{Fakult\"at f\"ur Mathematik,TU Chemnitz,
  D-09107 Chemnitz, Germany \\ \hspace*{1,3em}\& Emmy-Noether
  Programme of the DFG}
\urladdr{www.tu-chemnitz.de/mathematik/schroedinger/members.php}

\keywords{integrated density of states, periodic and random operators,
  metric graphs, quantum graphs, continuity properties}
\subjclass[2000]{35J10; 82B44} 
\date{\today, \emph{File}: \texttt{\jobname.tex}}


\begin{abstract}
  We establish several properties of the integrated density of states
  for random quantum graphs: Under appropriate ergodicity and
  amenability assumptions, the integrated density of states can be
  defined using an exhaustion procedure by compact subgraphs. A trace
  per unit volume formula holds, similarly as in the Euclidean case.
  Our setting includes periodic graphs.  For a model where the edge
  lengths are random and vary independently in a smooth way we prove a
  Wegner estimate and related regularity results for the integrated
  density of states.

  These results are illustrated for an example based on the Kagome
  lattice.  In the periodic case we characterise all compactly
  supported eigenfunctions and calculate the position and size of
  discontinuities of the integrated density of states.
\end{abstract}

\maketitle

%
\section{Introduction}
\label{sec:intro}
%

Quantum graphs are Laplace or Schr\"odinger operators on metric
graphs. As structures intermediate between discrete and continuum
objects they have received quite some attention in recent years in
mathematics, physics and material sciences, see e.g. the recent
proceeding volume~\cite{ekkst:08} for an overview.

Here, we study periodic and random quantum graphs.  Our results
concern spectral properties which are related to the integrated
density of states (IDS), sometimes called spectral distribution
function.  As in the case of random Schr\"odinger operators in
Euclidean space, disorder may enter the operator via the potential.
Moreover, and this is specific to quantum graphs, randomness may also
influence the characteristic geometric ingredients determining the
operator, viz.
\begin{itemize}
 \item the lengths of the edges of the metric graph and
 \item the vertex conditions at each junction between the edges.
\end{itemize}

In the present paper we pay special attention to randomness in these
geometric data.  Our results may be summarised as follows. For quite
wide classes of quantum graphs we establish
\begin{itemize}
\item the existence, respectively the convergence in the macroscopic
  limit, of the integrated density of states under suitable
  ergodicity and amenability conditions (see \Thm{ids}),
\item a trace per unit volume formula for the IDS
  (see equation~\eqref{eq:idsn}),
\item a Wegner estimate for random edge length models (assuming
  independence and smoothness for the disorder) (\Thm{wegner}). This
  implies quantitative continuity estimates for the IDS
  (\Cor{wegner}).
\end{itemize}

These abstract results are illustrated by the thorough discussion of
an example concerning a combinatorial and a metric graph based on the
Kagome lattice.  In this case we calculate positions and sizes of all
jumps of the IDS. Our results show the effect of smoothing of the IDS
via randomness.

The article is organised as follows: In the remainder of this section
we summarise the origin of results about the construction of the IDS
and of Wegner estimates and point out aspects of the proofs which are
different in the case of quantum graphs in comparison to random
Schr\"odinger operators on $L^2(\RR^d)$ or $\ell^2(\ZZ^d)$.  We
mention briefly recent results about spectral properties of random
quantum graphs which are in some sense complementary to ours. Finally,
we point out some open problems in this field of research.  In the
next section, we introduce the random length model and state the main
results. In \Sec{example} we present the Kagome lattice example. In
\Sec{approx} we prove \Thm{ids} concerning the approximability of the
IDS. Finally, in \Sec{wegner} we prove the Wegner estimate
\Thm{wegner}.

Intuitively, the IDS concerns the number of quantum states per unit
volume below a prescribed energy.  From the physics point of view the
natural definition of this quantity is via a macroscopic limit. This
amounts to approximating the (ensemble-averaged) spectral distribution
function of an operator on the whole space by normalised eigenvalue
counting functions associated to finite-volume restrictions of the
operator.  For ergodic random and almost-periodic operators in
Euclidean space this approach has been implemented rigorously
in~\cite{Pastur-71,Shubin-79}, and developed further in a number of
papers, among them~\cite{KirschM-82c},~\cite{Matsumoto-93}
and~\cite{HupferLMW-01b}.  All these operators were stationary and
ergodic with respect to a commutative group of translations.  For
graphs and manifolds beyond Euclidean space the relevant group is in
general no longer abelian.  The first result establishing the
approximability of the IDS of a periodic Schr\"odinger operator on a
manifold was~\cite{adachi-sunada:93}.  An important assumption on the
underlying geometry is amenability.  Analogous results on transitive
graphs have been established e.g.~in~\cite {mathai-yates:02}
and~\cite{msy:03}.  For Schr\"odinger operators with a random
potential on a manifold with an amenable covering group the existence
of the IDS was established in \cite{peyerimhoff-veselic:02}, and for
Laplace-Beltrami operators with random metrics in~\cite{lpv:04}.  For
analogous results for discrete operators on amenable graphs see
e.g.~\cite{Veselic-05b} and~\cite {LenzV}.  A key ingredient of the
proofs of the above results is the amenable ergodic theorem
of~\cite{lindenstrauss:01}.  More recenty, the question of
approximation of the IDS uniformly with respect to the energy variable
has been pursued, see for instance~\cite{lmv:08} and the references
therein.

Independently of the approximability by finite volume eigenvalue
counting functions it is possible to give an abstract definition of
the IDS by an averaged trace per unit volume formula,
see~\cite{Shubin-79,BellissardLT-85,lenz:99,LenzPV-07}.  In the
amenable setting, both definitions of the IDS coincide.

For a certain class of metric graphs the approximability of the IDS
has been established before.
In~\cite{helm-veselic:07,glv:07,glv.in:08} this has been carried out
for random metric graphs with an $\ZZ^d$ structure. A step of the
proof which is specific to the setting of quantum graphs concerns the
influence of finite rank perturbations on eigenvalue counting
functions.  When one considers Laplacians on manifolds, one would
rather use the principle of not feeling the boundary of heat kernels,
cf.~e.g.~\cite{adachi-sunada:93,peyerimhoff-veselic:02,lpv:04}, to
derive the analogous step of the proof.

Next we discuss the literature on Wegner estimates and on the
regularity of the IDS.  Wegner gave in~\cite{Wegner-81} convincing
arguments for the Lipschitz continuity of the IDS of the discrete
Anderson model on $\ell^2(\ZZ^d)$. The proof is based on an estimate
for the expected number of eigenvalues in a finite energy interval of
a restricted box Hamiltonian.  A rigorous proof of the latter estimate
was given in~\cite{kirsch:96} (for the analogous alloy-type model on
$L^2(\RR^d)$).  However, the bound of~\cite{kirsch:96} was not
sufficient to establish the Lipschitz continuity of the IDS.
In~\cite{chn:01} tools to prove H\"older continuity were supplied, see
also~\cite{hknsv:06}.  They concern bounds on the spectral shift
function. Up to now the most widely applicable result concerning the
Lipschitz-continuity of the IDS is given in~\cite{chk:07}.  An
alternative approach to derive Lipschitz continuity of the IDS goes
via spectral averaging of resolvents,
see~\cite{kotani-simon:87,combes-hislop:94}.  However, this method
requires more assumptions on the underlying model.

Wegner's estimate and all references mentioned so far concern the case
where the random variables couple to a perturbation which is a
non-negative operator. If this is not the case, additional ideas are
necessary to obtain the desired bounds,
see~\cite{klopp:95a,veselic:02a, hislop-klopp:02,kostrykin-veselic:06,
  veselic:pre08}.  In our situation, where the perturbation concerns
the metric of the underlying space, the dependence on the random
variables is not monotone. This is also the case for random metrics on
manifolds studied in~\cite{lppv:08}.  To deal with non-monotonicity,
the proof of the Wegner estimate (Theorem \ref{thm:wegner}) takes up
an idea developed in~\cite{lppv:08}, which is not unrelated
to~\cite{klopp:95a}.  The relevant formula used in the proof is
\eqref{eq:lambdai.scal}.  We need also a partial integration formula
whose usefulness was first seen in~\cite{hislop-klopp:02}.

In the context of quantum graphs it is not necessary to rely on
sophisticated estimates on the spectral shift function.  It is
sufficient to adapt a finite rank perturbation bound, which was used
in~\cite{kirsch-veselic:02b} for the analysis of one-dimensional
random Schr\"odinger operators.  These estimates are closely related
to the finite rank estimates mentioned earlier in the context of the
approximability of the IDS.  For Schr\"odinger operators on metric
graphs where the randomness enters via the potential, Wegner estimates
have been proved in~\cite{helm-veselic:07,GruberV-08,ghv.in:08}.  In
the recent preprint~\cite{klopp-pankrashkin:09} a Wegner estimate for
a model with $\Z^d$-structure and random edge lengths has been
established.  The proof is based on different methods than we use in
the present paper.

Next we want to explain an application of Wegner estimates apart from
the continuity of the IDS. It concerns the phenomenon of localisation
of waves in random media. More precisely, for certain types of random
Schr\"odinger operators on $\ell^2(\ZZ^d)$ and on $L^2(\RR^d)$ it is
well known that in certain energy intervals near spectral boundaries
the spectrum is pure point.  There are two basic methods to establish
this fact (apart form the one-dimensional situation where specific
methods apply).  The first one is called multiscale analysis and was
invented in~\cite{froehlich-spencer:83}. The second approach from
\cite{aizenman-molchanov:93} is called fractional moment method or
Aizenman-Molchanov method.  A certain step of the localisation proof
via multiscale analysis concerns the control of spectral resonances of
finite box Hamiltonians.  A possibility to achieve this control is the
use of a Wegner estimate. In fact, the Wegner estimates needed for
this purpose are much weaker than those necessary to establish
regularity of the IDS. This has been discussed in the context of
random quantum graphs in Section 3.2 of~\cite{ghv.in:08}.

Recently localisation has been proven for several types of random
quantum graphs.
In~\cite{ehs:07,klopp-pankrashkin:08,klopp-pankrashkin:09} this has
been done for models with $\ZZ^d$-structure,
while~\cite{hislop-post:pre06} considers operators on tree-graphs.  On
the other hand, delocalisation, i.e.~existence of absolutely
continuous spectrum, for quantum graph models on trees has been shown
in~\cite{asw:06a}. This result should be seen in the context of
earlier, similar results for combinatorial tree
graphs~\cite{klein:96,klein:98,asw:06a,asw:06d,fhs:06}.

Now let us discuss some open questions concerning random quantum graph
models.  As for models on $\ell^2(\ZZ^d)$, proofs of localisation
require that the random variables entering the operators should have a
regular distribution. In particular, if the law of the variables is a
Bernoulli measure, no known proof of localisation applies. This is
different for random Schr\"odinger operators $L^2(\RR^d)$.  Using a
quantitative version of the unique continuation principle for
solutions of Schr\"odinger equations, localisation was established
in~\cite{BourgainK-05} for certain models with Bernoulli disorder.
The proof does not carry over to the analogous model on
$\ell^2(\ZZ^d)$, since there is no appropriate version of the unique
continuation principle available.  For random quantum graphs the
situation is even worse, since they exhibit in great generality
compactly supported eigenfunctions, even if the underlying graph is
$\ZZ^d$.

Like for random, ergodic Schr\"odinger operators on $\ell^2(\ZZ^d)$
and on $L^2(\RR^d)$ there is no proof of delocalisation for random
quantum graphs with $\ZZ^d$ structure. In the above mentioned papers
on delocalisation it was essential that the underlying graph is a
tree.  An even harder question concerns the mobility edge. Based on
physical reasoning one expects that localised point spectrum and
delocalised absolutely continuous spectrum should be separated in
disjoint intervals by mobility edges. In the context of random
operators where the disorder enters via the geometry this leads to an
intriguing question pointed out already in~\cite{ChayesCFST-86}.  If
one considers a graph over $\ZZ^d$ which is diluted by a percolation
process, the Laplacian on the resulting combinatorial or metric graph
has a discontinuous IDS.  In fact, the set of jumps can be
characterised rather explicitely and is dense in the
spectrum~\cite{ChayesCFST-86,Veselic-05b,glv.in:08}.  Now the question
is, where the eigenvalues of these strongly localised states repell in
some manner absolutely continuous spectrum (if it exists at all).

For the interested reader we provide here references to textbook
accounts of the issues discussed above. They concern the more
classical models on $L^2(\RR^d)$ and $\ell^2(\ZZ^d)$, rather than
quantum graphs.  In~\cite{veselic:07} one can find a detailed
discussion and proofs of the approximability of the IDS by its finite
volume analogues and of Wegner estimates.  The survey article
\cite{KirschM-07} is devoted to the IDS in general, while the
multiscale proof of localisation is exposed in the monograph
\cite{stollmann:01}.  The theory of random Schr\"odinger operators is
presented from a broader perspective in the books
\cite{carmona-lacroix:90, pastur-figotin:92} and in the summer school
notes~\cite{Kirsch-89a,kirsch:pre07}.

\subsection*{Acknowledgements}

The second author is grateful for the kind invitation to the
Humboldt University of Berlin which was supported by the SFB~647.
NP and OP also acknowledge the financial support of the Technical
University Chemnitz.

\section{Basic notions, model and results}
\label{sec:qg}

In the following subsections, we fix basic notions (metric graphs,
Laplacians and Schr\"odinger operators with vertex conditions),
introduce the random length model and state our main results.  For
general treatments and further references on metric graphs, we refer
to~\cite{ekkst:08}.

\subsection{Metric graphs}

Since our random model concerns a perturbation of the \emph{metric}
structure of a graph, we carefully distinguish between
\emph{combinatorial}, \emph{topological} and \emph{metric graphs}.  A
\emph{combinatorial graph} $G = (V,E,\bd )$ is given by a countable
vertex set $V$, a countable set $E$ of edge labels and a map $\bd(e) =
\{ v_1,v_2 \}$ from the edge labels to (unordered) pairs of vertices.
If $v_1 = v_2$, we call $e$ a \emph{loop}. Note that this definition
allows multiple edges, but we only consider locally finite
combinatorial graphs, i.e., every vertex has only finitely many
adjacent edges.  A topological graph $X$ is a topological model of a
combinatorial graph together with a choice of directions on the edges:
\begin{definition}
  \label{def:tg}
  A \emph{(directed) topological graph} is a CW-complex $X$ containing
  only (countably many) $0$- and $1$-cells. The set $V=V(X) \subset X$
  of $0$-cells is called the \emph{set of vertices}.  The $1$-cells of
  $X$ are called \emph{(topological) edges} and are labeled by the
  elements of $E=E(X)$ (the \emph{(combinatorial) edges}), i.e., for
  every edge $e \in E$, there is a continuous map
  $\map{\Phi_e}{[0,1]}X$ whose image is the corresponding (closed)
  $1$-cell, and $\map{\Phi_e}{(0,1)}{\Phi_e((0,1))} \subset X$ is a
  homeomorphism. A $1$-cell is called a \emph{loop} if
  $\Phi_e(0)=\Phi_e(1)$. The map $\map{\bd = (\bd_-,\bd_+)} E {V
    \times V}$ describes the direction of the edges and is defined by
  \begin{equation*}
    \bd_-e := \Phi_e(0) \in V, \qquad
    \bd_+e := \Phi_e(1) \in V.
  \end{equation*}
  For $v \in V$ we define
  \begin{equation*}
    E_v^\pm=E_v^\pm(X) := \set{e \in E}{\bd_\pm e=v}.
  \end{equation*}
  The set of all adjacent edges is defined as the \emph{disjoint}
  union\footnote{The disjoint union is necessary in order to obtain
    \emph{two} different labels in $E_v(X)$ for a loop.}
  \begin{equation*}
    E_v=E_v(X) := E_v^+(X) \dcup E_v^-(X).
  \end{equation*}
  The \emph{degree} of a vertex $v \in V$ in $X$ is defined as
  \begin{equation*}
    \deg v = \deg_X(v) := \abs{E_v}
                        = \abs{E_v^+} + \abs{E_v^-}.
  \end{equation*}
  A \emph{topological subgraph $\Lambda$} is a CW-subcomplex of
  $X$, and therefore $\Lambda$ is itself a topological graph with
  (possible empty) boundary $\bd \Lambda := \Lambda \cap \clo{\compl
  \Lambda} \subset V(X)$.
\end{definition}

Since a topological graph is a topological space, we can introduce the
space $\Cont X$ of $\C$-valued continuous functions and the associated
notion of measurability. A metric graph is a topological graph where
we assign a length to every edge.

\begin{definition}
  \label{def:mg}
  A \emph{(directed) metric} graph $(X,\ell)$ is a topological
  graph $X$ together with a \emph{length function}
  $\map \ell {E(X)}{(0,\infty)}$. The length function induces an
  identification of the interval $I_e:=[0,\ell(e)]$ with the edge
  $\Phi_e([0,1])$ (up to the end-points of the corresponding $1$-cell,
  which may be identified in $X$ if $e$ is a loop) via the map
  \begin{equation*}
    \map{\Psi_e}{I_e} X, \qquad
    \Psi_e(x) = \Phi_e\Bigl(\frac x {\ell(e)} \Bigr).
  \end{equation*}
\end{definition}

Note that every topological graph $X$ can be canonically regarded as a
metric graph where \emph{all edges have length one}. The
corresponding length function $\1_{E(X)}$ is denoted by $\ell_0$. In
our random model, we will consider a fixed topological graph $X$ with
a random perturbation $\ell_\omega$ of this length function $\ell_0$.

To simplify matters, we canonically identify a metric graph $(X,\ell)$
with the disjoint union $X_\ell$ of the intervals $I_e$ for all $e \in
E$ subject to appropriate identifications of the end-points of these
intervals (according to the combinatorial structure of the graph),
namely
\begin{equation*}
  X_\ell := \bigdcup_{e \in E} I_e / {\sim}.
\end{equation*}
The coordinate maps $\{\Psi_e\}_e$ can be glued together to a map
\begin{equation}
  \label{eq:GX}
  \map {\Psi_\ell} {X_\ell} X.
\end{equation}
\begin{remark}
  \label{rem:met.space}

  A metric graph is canonnically equpped with a metric and a measure.
  Given the information about the lenght of edges, each path in
  $X_\ell$ has a well defined lenght. The distance between two
  arbitrary points $x,y \in X_\ell$ is defined as the infimum of the
  lenghts of paths joining the two points. The measure on $X_\ell$ is
  defined in the following way. For each measurable $\Lambda \subset
  X$ the sets $\Lambda \cap \psi_e(I_e)$ are measurable as well, and
  are assigned the Lebesgue measure of the preimage
  $\psi_e^{-1}(\Lambda \cap \psi_e(I_e))$.  Consequently, we define
  the volume of $\Lambda$ by
  \begin{equation}
    \label{eq:voldef}
\vol(\Lambda,\ell) :=\sum_{e \in E} \leb
 \bigl(\psi_e^{-1}(\Lambda \cap \psi_e(I_e)) \bigr)
  \end{equation}
\end{remark}

Using the identification~\eqref{eq:GX}, we define the function space
$\Lsqr {X,\ell}$ as
\begin{gather*}
  \Lsqr{X,\ell} := \bigoplus_{e \in E} \Lsqr {I_e}, \qquad
  f = \{f_e\}_e \quad \text{with $f_e \in \Lsqr{I_e}$ and}\\
  \normsqr[\Lsqr{X,\ell}] f
  = \sum_{e \in E} \int_{I_e} \abs{f_e(x)}^2 \dd x.
\end{gather*}

\subsection{Operators and vertex conditions}

For a given metric graph $(X,\ell)$, we introduce the operator
\[ (\de f)_e(x) = (\de_\ell f)_e(x) = \frac{d f_e}{dx} (x), \] where
the derivative is taken in the interval $I_e = [0,\ell(e)]$.  Note
that both the norm in $\Lsqr{X,\ell}$ and $\de =\de_\ell$ depend on
the length function. This observation is particularly important in our
random length model below, where we perturb the canonical length
function $\ell_0 =\1_{E(X)}$ and therefore have (a priori) different
spaces on which a function $f$ lives. Our point of view is that $f$ is
a function on the fixed underlying topological graph $X$, and that the
metric spaces are canonically identified via the maps $\map
{\Psi_{\ell_0}^{-1} \circ \Psi_\ell} {(X,\ell)} {(X,\ell_0)}$. One
easily checks that
\begin{subequations}
  \begin{gather}
    \label{eq:L2.resc}
    \normsqr[\Lsqr{X,\ell}] f
    = \sum_{e \in E} \ell(e) \int_{(0,1)} \abs{f_e(x)}^2 \dd x,\\
    \label{eq:D.resc}
    (\de_\ell f)_e(x)
    = \frac 1{\ell(e)}  (\de_{\ell_0} f)_e
           \Bigl( \frac 1{\ell(e)} x \Bigr),
  \end{gather}
\end{subequations}
where $f_e$ and $\de_{\ell_0} f$ on the right side are considered as
functions on $[0,1]$ via the identification $\Psi_{\ell_0}^{-1} \circ
\Psi_\ell$.

Next we introduce general vertex conditions for Laplacians $\lapl
{(X,\ell)} = - {\de_\ell}^2$ and Schr\"odinger operators $H_{(X,\ell)}
= \lapl {(X,\ell)} + q$ with real-valued potentials $q \in \Linfty X$.
The \emph{maximal} or \emph{decoupled} Sobolev space of order $k$ on
$(X,\ell)$ is defined by
\begin{gather*}
  \Sobx[k] \max {X,\ell}
  := \bigoplus_{e \in E} \Sob [k]{I_e}\\
  \normsqr[{\Sobx[k] \max {X,\ell}}] f
  := \sum_{e \in E} \normsqr[{\Sob [k]{I_e}}] {f_e}.
\end{gather*}
Note that $\map{\de_\ell} {\Sobx[k+1] \max {X,\ell}} {\Sobx[k] \max
  {X,\ell}}$ is a bounded operator. We introduce the following two different
evaluation maps ${\Sobx[1] \max {X,\ell}} \longrightarrow
{\bigoplus_{v \in V} \C^{E_v}}$:
\begin{equation*}
  {\ul f}_{\, e}(v) :=
  \begin{cases}
    f_e(0), & \text{if $v=\bd_-e$,}\\
    f_e(\ell(e)), & \text{if $v=\bd_+e$,}
  \end{cases} \quad\text{and}\quad
  \orul f_{e}(v) :=
  \begin{cases}
    -f_e(0), & \text{if $v=\bd_-e$,}\\
    f_e(\ell(e)), & \text{if $v=\bd_+e$,}
  \end{cases}
\end{equation*}
and $\ul f(v) = \{ {\ul f}_{\, e}(v) \}_{e \in E_v} \in \C^{E_v}$,
${\orul f}(v) = \{ {\orul f}_{e}(v) \}_{e \in E_v} \in \C^{E_v}$. It
follows from standard Sobolev estimates (see
e.g.~\cite[Lem.~8]{kuchment:04}) that these evaluation maps are
bounded by $\max\{(2/\ell_{\min})^{1/2},1\}$, provided the minimal
edge length
\begin{equation}
  \label{eq:len.bdd}
  0<\ell_{\min} 
  := \inf_{e \in E} \ell(e)
\end{equation}
is strictly positive. The second evaluation map is used in connection
with the derivative $\de f$ of a function $f \in \Sobx[2] \max
{X,\ell}$. Note that $\orul {\de f}$ is independent of the orientation
of the edge.

A \emph{single-vertex condition} at $v \in V$ is given by a Lagrangian
subspace $L(v)$ of the Hermitian symplectic vector space $(\C^{E_v}
\oplus \C^{E_v},\eta_v)$ with canonical two-form $\eta_v$ defined by
\begin{equation*}
  \eta_v((x,x'),(y,y')) := \iprod {x'} y - \iprod x {y'},
\end{equation*}
where $\iprod \cdot \cdot$ denotes the standard unitary inner product
in $\C^{E_v}$. The set of all Lagrangian subspaces of $(\C^{E_v}
\oplus \C^{E_v},\eta_v)$ is denoted by ${\mathcal L}_v$ and has a
natural manifold structure (see,
e.g.,~\cite{harmer:00,kostrykin-schrader:99} for more details on these
notions).  A Lagrangian subspace $L(v)$ can uniquely be described by
the pair $(Q(v),R(v))$ where $Q(v)$ is an orthogonal projection in
$\C^{E_v}$ with range $\mc G(v) := \ran Q(v)$ and $R(v)$ is a
symmetric operator on $\mc G(v)$ such that
\begin{equation}
  \label{eq:qg.vx.cond}
  L(v) := \bigset {(x,x')}
    {(1-Q(v))x = 0, \quad Q(v) x' = R(v) x}
\end{equation}
(see e.g.~\cite{kuchment:04}).

A field of single-vertex conditions $L := \{L(v)\}_{v \in V}$ is
called a \emph{vertex condition}. We say that $L$ is \emph{bounded},
if
\begin{equation}
  \label{eq:vx.pot.bdd}
  C_R := \sup_{v \in V} \norm {R(v)} < \infty,
\end{equation}
where the norm is the operator norm on $\mc G(v)$.  For any such
bounded vertex condition $L$, a bounded potential $q$ and a metric
graph $(X,\ell)$ with $\ell_{\min}>0$, we obtain a self-adjoint
Schr\"odinger operator $H_{(X,\ell),L} = \lapl {(X,\ell),L} + q$, by
choosing the domain
\begin{equation*}
  \dom H_{(X,\ell),L} := 
    \set{f \in \Sobx[2] \max {X,\ell}}
     { (\ul f(v),\orul {\de f}(v)) \in L(v) 
             \text{ for all $v \in V$}}.
\end{equation*}
Of particular interest are the following vertex conditions with
vanishing vertex operator $R(v) = 0$ for all $v \in V$:
\emph{Dirichlet} vertex conditions (where $L(v) = \{ 0 \} \oplus
\C^{E_v}$ or $\mc G(v)= \{0\}$), \emph{Kirchhoff} (also known as
\emph{free}) vertex conditions (where $(x,x') \in L(v)$ if all
components of $x$ are equal and the sum of all components of $x'$ add
up to zero, or equivalently $\mc G(v) = \C(1,\dots,1)$) and
\emph{Neumann} vertex conditions (where $L(v) = \C^{E_v} \oplus \{ 0
\}$ or equivalently $\mc G(v) = \C^{E_v}$).

\subsection{Random length model}
\label{ssec:ranlengthmod}

The underlying \emph{geometric} structure of a random length model is
a random length metric graph. A \emph{random length metric graph} is
based on a fixed topological graph $X$ with $V$ and $E$ the sets of
vertices and edges of $X$, a probability space $(\Omega, \Prob)$, and
a \emph{measurable} map $\map \ell {\Omega \times E} {(0,\infty)}$,
which describes the random dependence of the edge lengths. We also
assume that there are $\omega$-independent constants
$\ell_{\min},\ell_{\max} > 0$ such that $\ell_{\min} \le
\ell_\omega(e) \le \ell_{\max}$ for all $\omega \in \Omega$ and $e \in
E$. We will use the notation $\ell_\omega(e) := \ell(\omega,e)$.

A \emph{random length model} associates to such a geometric structure
$(X,\Omega,\Prob,\ell)$ a random family of Schr\"odinger operators
$H_\omega$, by additionally introducing \emph{measurable} maps $\map
{L(v)} \Omega {\mc L_v}$ for all $v \in V$, and $\map q {\Omega \times
  X} \R$, describing the random dependence of the vertex conditions
and the potentials of these operators. We will use the notation
$L_\omega := \{L_\omega(v)\}_{v \in V}$ and $q_\omega(x) =
q(\omega,x)$.  We assume that we have constants $C_R, C_\pot>0$ such
that
\begin{equation}
  \label{eq:ran.pot.bdd}
  \norm[\infty]{q_\omega} \le C_\pot
  \Und
  \norm{R_\omega(v)} \le C_R
\end{equation}
for almost all $\omega \in \Omega$ and all $v \in V$, where $R_\omega(v)$
is the vertex operator associated to $L_\omega(v)$.
From~\eqref{eq:ran.pot.bdd} and the lower length
bound~\eqref{eq:len.bdd} it follows that the Schr\"odinger operators
$H_\omega := \lapl{\omega} + q_\omega$ are self-adjoint and bounded
from below by some constant $\lambda_0 \in \R$ uniformly in $\omega
\in \Omega$ (see \Lem{lower.bd}).  We call the tuple
$(X,\Omega,\Prob,\ell,L,q)$ a \emph{random length model} with
associated Laplacians and Schr\"odinger operators $\lapl{\omega}$ and
$H_\omega$ and underlying random metric graphs $(X,\ell_\omega)$.

\subsection{Approximation of the IDS via exhaustions}

Let us describe the setting, for which our first main result holds.

\begin{assumption}
  \label{ass:covgraph}
  Let $(X,\Omega,\Prob,\ell,L,q)$ be a random length model with the
  following properties:
  \begin{enumerate}
  \item The topological graph $X$ is {non-compact} and connected
    with underlying (undirected) combinatorial graph $G = (V,E,\bd)$.
    There is a subgroup $\Gamma \subset \Aut(G)$, acting freely on $V$
    with only finitely many orbits. Then $\Gamma$ acts also
    canonically on $X$ (but does not necessarily respect the
    directions) by
    \begin{equation*}
      \gamma \Phi_e(x) = 
      \begin{cases}
        \Phi_{\gamma e}(x) & \text{if $\bd_\pm(\gamma e) =
          \gamma(\bd_\pm e)$,}\\
        \Phi_{\gamma e}(1-x) & \text{if $\bd_\pm(\gamma e) =
          \gamma(\bd_\mp e)$.}
      \end{cases}
    \end{equation*}
    This action carries over to $\Gamma$-actions on the metric graphs
    $(X,\ell_0)$ and $(X,\ell_\omega)$ via the
    identification~\eqref{eq:GX}.  Note that $\Gamma$ acts even
    \emph{isometrically} on the equilateral graph $(X,\ell_0)$ with
    $\ell_0 = \1_E$. We can think of $(X,\ell_0)$ as a \emph{covering}
    of the compact topological graph $(X/\Gamma,\ell_0)$.

  \item We also assume that $\Gamma$ acts \emph{ergodically} on
    $(\Omega,\Prob)$ by measure preserving transformations with the
    following consistencies between the two $\Gamma$-actions on $X$
    and $\Omega$:
    \begin{subequations}
      \begin{description}
      \item[Metric consistency] We assume that
        \begin{equation}
          \label{eq:ell.cons}
          \ell_{\gamma \omega}(e) = \ell_\omega(\gamma e)
        \end{equation}
        for all $\gamma \in \Gamma$, $\omega \in \Omega$ and $e \in
        E$.  This implies that for every $\gamma \in \Gamma$, the map
        \begin{equation*}
          \map \gamma {(X,\ell_\omega)} {(X,\ell_{\gamma \omega})}
        \end{equation*}
        is an isometry between two (different) metric graphs.
        Moreover, the induced operators
        \begin{equation*}
          \map {U_{(\omega,\gamma)}} {\Lsqr {X,\ell_{\gamma^{-1} \omega)}}} 
          {\Lsqr {X,\ell_\omega}}
        \end{equation*}
        are unitary.
      \item[Operator consistency] The transformation behaviour of
        $q_\omega$ and $L_\omega$ is such that we have for all $\omega
        \in \Omega$, $\gamma \in \Gamma$,
        \begin{equation}
          \label{eq:opcons}
          H_\omega = U_{(\omega.\gamma)}
          H_{\gamma^{-1} \omega} U_{(\omega, \gamma)}^*.
        \end{equation}
      \end{description}
    \end{subequations}
    Such a random length model $(X,\Omega,\Prob,\ell,L,q)$ is called a
    \emph{random length covering model} with associated operators
    $H_\omega$ and covering group $\Gamma$.
  \end{enumerate}
\end{assumption}

\begin{remark}
  \label{rem:trivrandfam}
  The simplest random length covering model is given when the
  probability space $\Omega$ consists of only one element with
  probability $1$. In this case, we have only one length function
  $\ell = \ell_\omega$, one vertex condition $L = L_\omega$, and one
  potential $q = q_\omega$.  The corresponding family of operators
  consists then of a single operator $H = H_\omega$. Moreover, the
  metric consistency means that $\Gamma$ acts isometrically on
  $(X,\ell)$, and the operator consistency is nothing but the
  periodicity of $H$, i.e., the property that $H$ commutes with the
  induced unitary $\Gamma$-action on $\Lsqr {X,\ell}$.
\end{remark}

Next, we introduce some more notation. Let $\mc F_0$ be a relatively
compact topological fundamental domain of the $\Gamma$-action on
$(X,\ell_0)$ such that its closure $\mc F =\clo{\mc F}_0$ is a
topological subgraph. (An example of such a topological fundamental
domain is given in \Fig{hex-graph}~(a) below.)  There is a canonical
spectral distribution function $N(\lambda)$, associated to the family
$H_\omega$, given by the trace formula
\begin{equation}
  \label{eq:idsn}
  N(\lambda)
  := \frac{1}{\Exp { \vol({\mathcal F},\ell_\bullet) }}
    \Exp{ \tr_\bullet [\1_{\mathcal F} 
                P_\bullet((-\infty,\lambda])] }, 
\end{equation}
where $\Exp {\cdot}$ denotes the expectation in $(\Omega,\Prob)$,
$\tr_\omega$ is the trace on the Hilbert space $\Lsqr
{X,\ell_\omega}$, and $P_\omega(I)$ denotes the spectral projection
associated to $H_\omega$ and the interval $I \subset \R$. Moreover,
the volume $\vol(\mc F,\ell_\bullet)$ is defined in~\eqref{eq:voldef}.
The function $N$ is called the \emph{(abstract) integrated density of
  states} with abbreviation IDS.

In the case of an amenable group $\Gamma$ the abstract IDS can also be
obtained via appropriate exhaustions. This is the statement of
\Thm{ids} below. A discrete group $\Gamma$ is called \emph{amenable},
if there exist a sequence $I_n \subset \Gamma$ of finite, non-empty
subsets with
\begin{equation}
 \label{eq:Foelner}
 \lim_{n\to\infty} \frac{|I_n\, \Delta\, I_n \gamma|}{|I_n|} = 0,  \quad 
 \text{ for all $\gamma \in \Gamma$.}
\end{equation}
A sequence $I_n$ satisfying~\eqref{eq:Foelner} is called a
\emph{F{\o}lner sequence}.

For every non-empty finite subset $I \subset \Gamma$, we define $
\Lambda(I) := \bigcup_{\gamma \in I} \gamma \mathcal F$. A sequence
$I_n \subset \Gamma$ of finite subsets is F\o lner if and only if the
associated sequence $\Lambda_n = \Lambda(I_n)$ of topological
subgraphs satisfies the \emph{van Hove condition}
\begin{equation}
  \label{eq:isop0}
  \lim_{n \to \infty} \frac{| \bd \Lambda(I_n)|}
                      {\vol(\Lambda(I_n),\ell_0)}
  = 0. 
\end{equation}
The proof of this fact is analogous to the proof of~\cite[Lemma
2.4]{peyerimhoff-veselic:02} in the Riemannian manifold case. Note
that~\eqref{eq:isop0} still holds if we replace $\bd \Lambda(I_n)$ by
$\bd_r \Lambda(I_n)$ for any $r \ge 1$, where $\bd_r \Lambda$ denotes
the \emph{thickened combinatorial boundary} $\set{ v \in V}{d(v,\bd
  \Lambda) \le r}$
and $d$ denotes the \emph{combinatorial distance}
which agrees (on the set of vertices) with the distance function of
the unilateral metric graph $(X,\ell_0)$.

A F\o lner sequence $I_n$ is called \emph{tempered}, if we additionally
have
\begin{equation} \label{eq:tempfol}
  \sup_{n \in \N} \frac{|\bigcup_{k \le n} I_{n+1} I_k^{-1}|}{|I_{n+1}|} 
< \infty. 
\end{equation}
Tempered F\o lner sequences are needed for an ergodic theorem of Lindenstrauss
\cite{lindenstrauss:01}. This ergodic theorem plays a crucial role in the
proof of \Thm{ids} presented below. However, the additional
property~\eqref{eq:tempfol} is not very restrictive since it was also shown in
\cite{lindenstrauss:01} that every F\o lner sequence $I_n$ has a tempered
subsequence $I_{n_j}$.

For any compact topological subgraph $\Lambda$ of $X$, we denote the
operator with Dirichlet vertex conditions on the boundary vertices
$\bd \Lambda$ and with the original vertex conditions $L_\omega(v)$ on
all inner vertices $v \in V(\Lambda) \setminus \bd \Lambda$ by
$H_\omega^{\Lambda,\Dir}$.  The label $\Dir$ refers to the Dirichlet
conditions on $\bd \Lambda$. For a precise definition of the Dirichlet
operator via quadratic forms, we refer to \Sec{approx}.  The spectral
projection corresponding to $H_\omega^{\Lambda,\Dir}$ is denoted by
$P_\omega^{\Lambda,\Dir}$. It is well-known that compactness of
$\Lambda$ implies that the operator $H_\omega^{\Lambda,\Dir}$ has
purely discrete spectrum.  The \emph{normalised eigenvalue counting
  function} associated to the operator $H_\omega^{\Lambda,\Dir}$ is
defined as
\begin{equation*}
  N_\omega^\Lambda(\lambda)
  = \frac{1}{\vol(\Lambda,\ell_\omega)} 
      \tr_\omega [P_\omega^{\Lambda,\Dir}((-\infty,\lambda])].
\end{equation*}
The function $N_\omega^\Lambda$ is the distribution function of a
(unique) pure point measure which we denote by $\mu_\omega^\Lambda$.

If $\Lambda = \Lambda(I_n)$ is associated to a F\o lner sequence $I_n
\subset \Gamma$, we use the abbreviations $H_\omega^{n,\Dir} :=
H_\omega^{\Lambda(I_n),\Dir}$ for the Schr\"odinger operator with
Dirichlet conditions on $\bd \Lambda(I_n)$,
$N_\omega^n:=N_\omega^{\Lambda(I_n)}$ for the normalised eigenvalue
counting function and $\mu_\omega^n:= \mu_\omega^{\Lambda(I_n)}$ for
the corresponding pure point measure on $\Lambda(I_n)$. We can now
state our first main result:
\begin{theorem}
  \label{thm:ids}
  Let $(X,\Omega,\Prob,\ell,L,q)$ be a random length covering model as
  described in \Ass{covgraph} with \emph{amenable} covering group
  $\Gamma$. Let $N$ be the IDS of the operator family $H_\omega$.
  Then there exist a subset $\Omega_0 \subset \Omega$ of full $\Prob$-measure
  such that we have, for every \emph{tempered} F\o lner sequence
  $I_n \subset \Gamma$,
  \begin{equation*}
    \lim_{n \to \infty} N_\omega^n(\lambda) = N(\lambda) 
  \end{equation*}
  for all $\omega \in \Omega_0$ and all points $\lambda \in \R$ at which $N$
  is continuous.
\end{theorem}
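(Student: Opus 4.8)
The plan is to treat the numerator and the denominator of
\[
  N_\omega^n(\lambda)
  = \frac{\tr_\omega[P_\omega^{n,\Dir}((-\infty,\lambda])]}
         {\vol(\Lambda(I_n),\ell_\omega)}
\]
separately, to identify each of them (up to a negligible boundary term in the numerator) with a F\o lner average of a bounded stationary process over $I_n$, and to conclude with Lindenstrauss' pointwise ergodic theorem. Fix a tempered F\o lner sequence $I_n$ throughout. For the denominator, writing $\Lambda(I_n)=\bigcup_{\gamma\in I_n}\gamma\mc F$ and using the metric consistency~\eqref{eq:ell.cons} (the isometry $\map{\gamma}{(X,\ell_{\gamma^{-1}\omega})}{(X,\ell_\omega)}$ carries $\mc F$ onto $\gamma\mc F$ and preserves volume) gives $\vol(\gamma\mc F,\ell_\omega)=\vol(\mc F,\ell_{\gamma^{-1}\omega})$. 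Hence $\frac{1}{|I_n|}\vol(\Lambda(I_n),\ell_\omega)$ is exactly the F\o lner average of the bounded, hence integrable, stationary function $\omega\mapsto\vol(\mc F,\ell_\omega)$, and Lindenstrauss' theorem~\cite{lindenstrauss:01} yields almost sure convergence to $\Exp{\vol(\mc F,\ell_\bullet)}$, the denominator of~\eqref{eq:idsn}.

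For the numerator I would first replace the Dirichlet operator by a localisation of the \emph{global} operator. The trace $\tr_\omega[\1_{\Lambda(I_n)}P_\omega((-\infty,\lambda])]$ is finite (it is the quantity that makes~\eqref{eq:idsn} well defined) and splits as $\sum_{\gamma\in I_n}\tr_\omega[\1_{\gamma\mc F}P_\omega((-\infty,\lambda])]$. By the operator consistency~\eqref{eq:opcons} the unitaries $U_{(\omega,\gamma)}$ intertwine $H_\omega$ and $H_{\gamma^{-1}\omega}$ and carry $\1_{\gamma\mc F}$ to $\1_{\mc F}$, so each summand equals $\tr_{\gamma^{-1}\omega}[\1_{\mc F}P_{\gamma^{-1}\omega}((-\infty,\lambda])]$. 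Thus $\frac{1}{|I_n|}\tr_\omega[\1_{\Lambda(I_n)}P_\omega((-\infty,\lambda])]$ is again a F\o lner average of a bounded stationary function of $\omega$, and the ergodic theorem gives almost sure convergence to $\Exp{\tr_\bullet[\1_{\mc F}P_\bullet((-\infty,\lambda])]}$, the numerator of~\eqref{eq:idsn}.

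The step I expect to be the main obstacle is to show that passing from this localised global trace to the actual Dirichlet eigenvalue count costs only a boundary term,
\[
  \bigl| \tr_\omega[P_\omega^{n,\Dir}((-\infty,\lambda])]
       - \tr_\omega[\1_{\Lambda(I_n)}P_\omega((-\infty,\lambda])] \bigr|
  \le C\,|\bd_r\Lambda(I_n)|,
\]
with $C$ independent of $n$, $\omega$ and $\lambda$. This is the quantum-graph-specific ingredient: imposing Dirichlet conditions at the finitely many boundary vertices $\bd\Lambda(I_n)$ relates the Dirichlet operator to the global operator localised to $\Lambda(I_n)$ by a perturbation of finite rank, with one degree of freedom per edge end meeting $\bd\Lambda(I_n)$, hence of rank at most $C\,|\bd_r\Lambda(I_n)|$. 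Since a finite-rank (equivalently finite-codimension in the form domain) perturbation of rank $m$ changes an eigenvalue counting function by at most $m$, uniformly in $\lambda$, by the min-max principle and eigenvalue interlacing, the displayed bound follows. Dividing by $\vol(\Lambda(I_n),\ell_\omega)$, which by the uniform length bounds $\ell_{\min}\le\ell_\omega\le\ell_{\max}$ is comparable to $\vol(\Lambda(I_n),\ell_0)$ and therefore grows like $|I_n|$, while the van Hove property~\eqref{eq:isop0} forces $|\bd_r\Lambda(I_n)|/\vol(\Lambda(I_n),\ell_0)\to 0$, shows that the boundary contribution to $N_\omega^n(\lambda)$ vanishes.

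Combining the three limits gives $N_\omega^n(\lambda)\to N(\lambda)$ almost surely for each \emph{fixed} $\lambda$. To obtain a single full-measure set I would fix a countable dense set $D\subset\R$ of energies, apply the previous argument to every $\lambda\in D$, and let $\Omega_0$ be the intersection of the countably many exceptional sets together with the one coming from the denominator. For an arbitrary continuity point $\lambda$ of $N$ and $\lambda',\lambda''\in D$ with $\lambda'<\lambda<\lambda''$, the monotonicity of $\lambda\mapsto N_\omega^n(\lambda)$ yields, for $\omega\in\Omega_0$,
\[
  N(\lambda') \le \liminf_{n} N_\omega^n(\lambda)
             \le \limsup_{n} N_\omega^n(\lambda) \le N(\lambda''),
\]
and letting $\lambda',\lambda''\to\lambda$ and using continuity of $N$ at $\lambda$ gives $\lim_{n} N_\omega^n(\lambda)=N(\lambda)$. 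This argument works verbatim along any tempered F\o lner sequence, since Lindenstrauss' theorem applies along each of them, which yields the assertion of the theorem.
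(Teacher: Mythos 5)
Your overall architecture (split numerator and denominator, use the operator consistency~\eqref{eq:opcons} to recognise the summands over $\gamma\in I_n$ as a stationary $\Lsymb_1$-process, apply Lindenstrauss, kill the boundary via the van Hove property, and finish with a countable dense set of energies) matches the paper's strategy. But the step you yourself flag as the main obstacle is a genuine gap, and the justification you offer for it does not work. The quantity $\tr_\omega\bigl[\1_{\Lambda(I_n)}P_\omega((-\infty,\lambda])\bigr]$ is \emph{not} an eigenvalue counting function of any operator: $P_\omega((-\infty,\lambda])$ is the spectral projection of the operator on the full non-compact graph, which in general has purely continuous spectrum, and its compression $\1_\Lambda P_\omega \1_\Lambda$ is merely a positive trace-class operator, not a spectral projection. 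The min-max/interlacing principle you invoke compares the counting functions $n(H_1,\lambda)$ and $n(H_2,\lambda)$ of two operators with discrete spectrum whose (form) domains differ by finite codimension — that is exactly \Lem{ssf} of the paper — but here one of your two quantities is not of that form, so no rank bound gives you $\bigl|\tr[P_\omega^{n,\Dir}((-\infty,\lambda])]-\tr[\1_{\Lambda(I_n)}P_\omega((-\infty,\lambda])]\bigr|\le C\,|\bd_r\Lambda(I_n)|$ uniformly in $\lambda$. Nor can you reach the spectral projections through Dirichlet--Neumann bracketing, because $x\mapsto\1_{(-\infty,\lambda]}(x)$ is not operator monotone: $H^{\Lambda,\Dir}\oplus H^{\Lambda',\Dir}\ge H_\omega$ does not imply any inequality between the corresponding projections.

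This is precisely why the paper does not work with projections directly. It first proves the convergence for the resolvent functions $f_\lambda(x)=(x+\lambda)^{-1}$: since $x\mapsto -(x+\lambda)^{-1}$ \emph{is} operator monotone, the bracketing of \Lem{brack.op} yields
\begin{equation*}
  0 \le \tr\bigl[(H_\omega+\lambda)^{-1}_{\Lambda}-(H_\omega^{\Lambda,\Dir}+\lambda)^{-1}\bigr]
    \le \tr\bigl[f_\lambda(H_\omega^{\Lambda})-f_\lambda(H_\omega^{\Lambda,\Dir})\bigr],
\end{equation*}
and the right-hand side is now a difference of traces of functions of two \emph{bona fide} operators with discrete spectrum whose vertex conditions differ only at $\bd\Lambda$, so the spectral shift estimate~\eqref{eq:ssftrineq} bounds it by $C|\bd\Lambda|$. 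Lindenstrauss is then applied to $g_\lambda(\omega)=\tr[\1_{\mc F}f_\lambda(H_\omega)]$ (whose integrability is supplied by \Lem{trest}), the class of test functions is enlarged to $\Cont{[0,\infty]}$ by Stone--Weierstrass, and only at the very end does one pass to indicator functions at continuity points of $N$ by a vague-convergence argument — which is the rigorous counterpart of your final dense-set step. To repair your proof you would have to replace your projection comparison by this resolvent (or an analogous heat-semigroup) detour; as written, the central estimate is unsupported.
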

The proof is given in \Sec{approx}.
\begin{remark}
  The proof of \Thm{ids} yields even more. Let $\mu$ denote the
  measure associated to the distribution function $N$. Then we have
  \begin{equation}
    \label{eq:idsmeasconv} 
    \lim_{j \to \infty} \mu_\omega^n(f) = \mu(f)
  \end{equation}
  for all $\omega \in \Omega_0$ and all functions $f$ of the form
  $f(x) = g(x)(x+1)^{-1}$ with a function $g$ continuous on
  $[0,\infty)$ and with limit at infinity. (The behaviour of $g(x)$
  for $x < 0$ is of no importance since the spectral measures of all
  operators under consideration are supported on $\R^+ = [0,\infty)$.)
\end{remark}

\subsection{Wegner estimate}

In this subsection, we state a linear Wegner estimate for Laplace
operators of a random length model with independently distributed edge
lengths and fixed Kirchhoff vertex conditions. This Wegner estimate is
linear both in the number of edges and in the length of the considered
energy interval. {As mentioned in the introduction, a similar result
  for the case $\Z^d$ was proved recently by different methods in
  \cite{klopp-pankrashkin:09}}. In contrast to the previous
subsection, we do not require periodicity of the graph $X$ associated
to a group action.  More precisely, we assume the following:

\begin{assumption}
  \label{ass:wegner}
  Let $(X,\Omega,\Prob,\ell,L,q)$ be a random length model with the
  following properties:
  \begin{enumerate}
  \item We have $q \equiv 0$, i.e., the random family of operators are
    just the Laplacians ($H_\omega = \lapl {\omega}$) and we have no
    randomness in the vertex condition by fixing $L$ to be Kirchhoff
    in all vertices. Thus it suffices to look at the tuple
    $(X,\Omega,\Prob,\ell)$.

  \item We have a uniform upper bound $d_{\max} < \infty$ on the
    vertex degrees $\deg v$, $v \in V(X)$.
  
 \item Since the only randomness occurs in the edge lengths satisfying
    \begin{equation*}
      0 < \ell_{\min} \le \ell_\omega(e) \le \ell_{\max} 
         \quad \text{for all $\omega \in \Omega$ and $e \in E(X)$,}
    \end{equation*}
    we think of the probability space $\Omega$ as a Cartesian product
    $\prod_{e \in E} [\ell_{\min},\ell_{\max}]$ with projections
    $ \Omega \ni\omega \mapsto \omega_e = \ell_\omega(e) \in
    [\ell_{\min},\ell_{\max}]$.  The measure $\Prob$ is assumed to be a
    product $\bigotimes_{e \in E} \Prob_e$ of probability measures
    $\Prob_e$. Moreover, for every $e \in E$, we assume that $\Prob_e$
    is absolutely continuous with respect to the Lebesgue measure on
    $[\ell_{\min},\ell_{\max}]$ with density functions $h_e \in
    \Cont[1] \R$ satisfying
    \begin{equation}
      \label{eq:Cg.ex}
      \norm[\infty] {h_e}, \norm[\infty] {h_e'} \le C_h,
    \end{equation}
    for a constant $C_h > 0$ independent of $e \in E$.
  \end{enumerate}
\end{assumption}
Recall that $\tr_\omega$ is the trace in the Hilbert space $\Lsqr
{\Lambda,\ell_\omega}$. In the next theorem $P_\omega^{\Lambda,\Dir}$
denotes the spectral projection of the Laplacian
$\Delta_{\omega}^{\Lambda,\Dir}$ on $(\Lambda,\ell_\omega)$ with
Kirchhoff vertex conditions on all interior vertices and Dirichlet
boundary conditions on $\bd \Lambda$.  Under these assumptions we
have:
\begin{theorem}
  \label{thm:wegner}
  Let $(X,\Omega,\Prob,\ell)$ be a random length model satisfying
  \Ass{wegner}. Let $u > 1$ and $J_u = [1/u,u]$. Then there exists a
  constant $C>0$ such that
  \begin{equation*}
    \Exp {\tr P_\bullet^{\Lambda,\Dir}(I)}
     \le C \cdot \leb (I) \cdot \abs{E(\Lambda)}
  \end{equation*}
  for all compact subgraphs $\Lambda \subset X$ and all compact
  intervals $I \subset J_u$, where $\leb(I)$ denotes the
  Lebesgue-measure of $I$, and where $\abs{E(\Lambda)}$ denotes the
  number of edges in $\Lambda$.  The constant $C > 0$ depends only the
  constants $u$, $d_{\max}$, $\ell_{\min}$, $\ell_{\max}$ and the
  bound $C_h > 0$ associated to the densities $h_e$
  (see~\eqref{eq:Cg.ex}).
\end{theorem}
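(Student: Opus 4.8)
The plan is to control the expected eigenvalue count by first–order perturbation theory in the edge lengths combined with spectral averaging along the dilation direction. Since $(\Lambda,\ell_\omega)$ is compact, $\Delta_\omega^{\Lambda,\Dir}$ has purely discrete spectrum $0<\lambda_1(\omega)\le\lambda_2(\omega)\le\cdots$, and $\tr P_\omega^{\Lambda,\Dir}(I)=\#\{k:\lambda_k(\omega)\in I\}$. First I would establish an a priori counting bound: by the Weyl asymptotics for a Kirchhoff--Dirichlet Laplacian on a finite metric graph, the number of eigenvalues below $u$ is at most $\tfrac{\sqrt u}{\pi}\sum_{e\in E(\Lambda)}\ell_\omega(e)+O(\abs{V(\Lambda)})$, which, using $\ell_\omega(e)\le\ell_{\max}$ and the degree bound $d_{\max}$ (so that $\abs{V(\Lambda)}\le c(d_{\max})\abs{E(\Lambda)}$), is bounded by $c_0\,\abs{E(\Lambda)}$ uniformly in $\omega$, with $c_0=c_0(u,\ell_{\max},d_{\max})$. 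In particular only finitely many branches can meet a fixed enlargement of $J_u$, which makes all sums over $k$ below effectively finite and is the source of the factor $\abs{E(\Lambda)}$.

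The decisive step records how each eigenvalue responds to a variation of the edge lengths. Writing the operator in rescaled form (\Eqs{L2.resc}{D.resc}), the eigenvalue problem on $(\Lambda,\ell_\omega)$ becomes a generalised eigenvalue problem on the fixed equilateral graph whose mass form carries a factor $\ell_\omega(e)$ and whose energy form a factor $\ell_\omega(e)^{-1}$ on each edge. The Hellmann--Feynman formula (the first–order contribution from the eigenfunctions cancels by the eigenvalue equation and the $\Lsqr{\Lambda,\ell_\omega}$–normalisation $\norm[\Lsqr{\Lambda,\ell_\omega}]{f_k}=1$) then gives, for the normalised eigenfunction $f_k$ of $\lambda_k(\omega)$,
\[
  \frac{\partial \lambda_k}{\partial \ell_\omega(e)}
  = -\frac{1}{\ell_\omega(e)}\Bigl(\int_{I_e}\abs{f_{k,e}'}^2\,\dd x
       + \lambda_k \int_{I_e}\abs{f_{k,e}}^2\,\dd x\Bigr) \le 0 .
\]
Summing with the weights $\ell_\omega(e)$ and using $\sum_e\int_{I_e}\abs{f_{k,e}'}^2=\lambda_k$ and $\sum_e\int_{I_e}\abs{f_{k,e}}^2=1$ yields the scaling identity (the infinitesimal form of the uniform dilation $\ell_\omega\mapsto\tau\ell_\omega$, under which $\lambda_k\mapsto\tau^{-2}\lambda_k$)
\[
  \sum_{e\in E(\Lambda)}\ell_\omega(e)\,\frac{\partial\lambda_k}{\partial\ell_\omega(e)}
  = -2\lambda_k(\omega).
\]
Hence, whenever $\lambda_k$ lies in (a fixed enlargement of) $J_u$, one has $\sum_e\bigl(-\partial_{\ell_\omega(e)}\lambda_k\bigr)\ge 2/(u\,\ell_{\max})=:\kappa>0$: although no single edge length need move the eigenvalue appreciably (this is the non-monotonicity alluded to in the introduction), the totality of edges provides a direction in parameter space along which $\lambda_k$ decreases at a rate bounded below.

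Next comes spectral averaging with partial integration, adapting \cite{lppv:08,hislop-klopp:02}. I would first telescope: with smooth non-increasing switch functions $\tilde\xi_a,\tilde\xi_b\ge\1_{(-\infty,a]},\1_{(-\infty,b]}$ related by the shift $\tilde\xi_b(\cdot)=\tilde\xi_a(\cdot-\leb(I))$, one writes $\tr P_\omega^{\Lambda,\Dir}(I)\le\tr[\tilde\xi_b(\Delta_\omega^{\Lambda,\Dir})]-\tr[\tilde\xi_a(\Delta_\omega^{\Lambda,\Dir})]=\int_0^{\leb(I)}\tr[\phi_s(\Delta_\omega^{\Lambda,\Dir})]\,\dd s$, where $\phi_s=-\tilde\xi_a'(\cdot-s)\ge0$ is a unit-mass bump localised in the enlargement of $J_u$. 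This already exhibits the factor $\leb(I)$, and it remains to bound $\Exp{\tr[\phi_s(\Delta_\bullet^{\Lambda,\Dir})]}$ by a constant times $\abs{E(\Lambda)}$, uniformly in $s$. For this I use the lower bound $\kappa$: writing $\Theta_s$ for a primitive of $\phi_s$, the identity $\sum_e(-\partial_{\ell_\omega(e)})\Theta_s(\lambda_k)=\phi_s(\lambda_k)\sum_e(-\partial_{\ell_\omega(e)}\lambda_k)\ge\kappa\,\phi_s(\lambda_k)$ gives, after summing over $k$, $\tr[\phi_s(\Delta_\omega^{\Lambda,\Dir})]\le\kappa^{-1}\sum_e(-\partial_{\ell_\omega(e)})\tr[\Theta_s(\Delta_\omega^{\Lambda,\Dir})]$. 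Integrating against the product density $\prod_e h_e$ and integrating by parts in each variable $\ell_\omega(e)\in[\ell_{\min},\ell_{\max}]$ moves the derivative onto the density: the boundary terms are controlled by $\norm[\infty]{h_e}\le C_h$ and the interior terms by $\norm[\infty]{h_e'}\le C_h$, while the occurring traces, restricted to the energy window where $\phi_s$ lives, are dominated by the a priori count $c_0\abs{E(\Lambda)}$. Collecting the constants produces the asserted bound $C\,\leb(I)\,\abs{E(\Lambda)}$ with $C=C(u,d_{\max},\ell_{\min},\ell_{\max},C_h)$.

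The hard part is the partial integration step, together with the bookkeeping that keeps it finite. The dilation field $\sum_e\ell_\omega(e)\partial_{\ell_\omega(e)}$ does not preserve the measure $\prod_e h_e\,\dd\ell_\omega(e)$ and pushes mass to the boundary of the box $\prod_e[\ell_{\min},\ell_{\max}]$, so one must verify that neither the boundary contributions nor the $h_e'$ terms destroy the linearity in $\abs{E(\Lambda)}$; the traces of the primitive $\Theta_s$ are only conditionally finite and must be handled through the telescoped difference, so that solely the $O(\abs{E(\Lambda)})$ eigenvalues in the window near $J_u$ contribute. This is precisely where the uniform a priori count $c_0\abs{E(\Lambda)}$, the uniform lower speed $\kappa$ on $J_u$, and the uniform density bound $C_h$ are jointly used, and it is the metric–graph analogue of the non-monotone argument of \cite{lppv:08}.
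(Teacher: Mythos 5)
Your overall strategy coincides with the paper's: smooth the indicator of $I$ into an integral of a bump, trade the bump for a derivative of a switch function along the uniform-dilation direction via the homogeneity identity $\sum_e\ell(e)\,\partial_{\ell(e)}\lambda_k=-2\lambda_k$ (the paper implements this in the log-coordinates $\omega_e=\ln\ell_\omega(e)$, where it becomes the exact scaling law~\eqref{eq:lambdai.scal}), and then integrate by parts against the product density. Up to your inequality $\tr[\phi_s(\Delta_\omega^{\Lambda,\Dir})]\le\kappa^{-1}\sum_e(-\partial_{\ell_\omega(e)})\tr[\Theta_s(\Delta_\omega^{\Lambda,\Dir})]$ the argument is sound and is the analogue of~\eqref{eq:rhopest}.

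The gap is in the final partial-integration step. After integrating by parts in $\ell_\omega(e)$, the quantities that appear (against $h_e'$ in the interior and against $h_e$ at the endpoints $\ell_{\min},\ell_{\max}$) are values of $\tr[\Theta_s(\Delta_\omega^{\Lambda,\Dir})]$ itself. Since $\phi_s\ge 0$ has unit mass, any primitive $\Theta_s$ is a step-like function taking different constant values on the two sides of the energy window; it is \emph{not} supported in the window, so $\bigabs{\tr\Theta_s(\Delta_\omega^{\Lambda,\Dir})}$ counts all eigenvalues on one side and is of order $\abs{E(\Lambda)}$ --- your a priori Weyl bound confirms this size but cannot improve it. Summing over the $\abs{E(\Lambda)}$ edges then gives $O(\abs{E(\Lambda)}^2)$, not the linear bound claimed. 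The missing ingredient is the observation~\eqref{eq:lapldiff}: because $\partial_{\omega_e}$ annihilates any $\omega_e$-independent quantity, one may subtract the reference term $\tr \rho_\lambda(\Delta_{(\omega',c)}^{\Lambda,\Dir}+t)$ \emph{before} integrating by parts, so that the integrand becomes a difference of traces of two operators differing only in the length of the single edge $e$. By the spectral shift function estimate of \Lem{ssf} (a finite-rank perturbation bound: decoupling the edge $e$ at its two endpoints costs at most $4d_{\max}$ each time, and the remaining comparison of the two explicit single-edge Dirichlet spectra costs $2C_{u,\ell_{\max}}$), that difference is bounded by $8d_{\max}+2C_{u,\ell_{\max}}$, a constant independent of $\abs{E(\Lambda)}$. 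This is what makes each of the $\abs{E(\Lambda)}$ edge terms $O(1)$ and the total linear; without the subtraction-plus-SSF step your bookkeeping does not close.
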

The proof will be given in \Sec{wegner}. We finish this section with
the following corollary. Recall that the periodic situation is a
special case of a random length covering model (see
\Rem{trivrandfam}):
\begin{corollary}
  \label{cor:wegner}
  Let $(X,\Omega,\Prob,\ell)$ be a random length covering model,
  satisfying both \Asss{covgraph}{wegner}, with \emph{amenable}
  covering group $\Gamma$. Then the IDS $N$ of the Laplacians
  $\Delta_\omega$ is a continuous function on $\R$ and even Lipschitz
  continuous on $(0,\infty)$.
\end{corollary}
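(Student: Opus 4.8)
The goal is to deduce continuity (and Lipschitz continuity away from zero) of the IDS $N$ from the Wegner estimate of \Thm{wegner}. The natural strategy is to combine the two main results of the previous subsections: \Thm{ids} tells us that $N$ is the pointwise limit (at continuity points) of the finite-volume counting functions $N_\omega^n$ along a tempered F\o lner sequence, while \Thm{wegner} gives a quantitative bound on the expected number of eigenvalues of the Dirichlet restrictions in any interval $I \subset J_u = [1/u,u]$. Since the periodic/random length covering model satisfying both assumptions is in particular a random length model with independent edge lengths, \Thm{wegner} applies to each $H_\omega^{n,\Dir} = \Delta_\omega^{\Lambda(I_n),\Dir}$.

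The plan is as follows. First I would fix a compact interval $I = [a,b] \subset (0,\infty)$, and choose $u>1$ large enough that $I \subset J_u$. The measure $\mu$ associated to $N$ can be recovered from the finite-volume measures $\mu_\omega^n$ via the convergence statement \eqref{eq:idsmeasconv} in the remark following \Thm{ids}. I would estimate $\mu(I)$ from above by approximating $\1_I$ (or rather $\1_I(x)(x+1)^{-1}$, to fit the admissible test functions) by continuous functions and using Fatou's lemma or a direct limit argument, together with the Wegner bound
\begin{equation*}
  \Exp{\tr P_\bullet^{\Lambda(I_n),\Dir}(I)} \le C \cdot \leb(I) \cdot \abs{E(\Lambda(I_n))}.
\end{equation*}
Dividing by the normalising volume $\vol(\Lambda(I_n),\ell_\omega)$ and taking expectations, the point is that $\abs{E(\Lambda(I_n))}$ and $\Exp{\vol(\Lambda(I_n),\ell_\bullet)}$ grow at comparable rates (both are proportional to $\abs{I_n}$ up to the bounded edge-length and bounded-degree constants), so the ratio stays bounded. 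This should yield $\mu(I) \le C' \leb(I)$ for a constant $C'$ independent of $I \subset J_u$, which is exactly Lipschitz continuity of $N$ on $J_u$; letting $u \to \infty$ gives Lipschitz continuity on all of $(0,\infty)$.

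For continuity at the remaining point $\lambda = 0$ (and on $(-\infty,0]$), I would use that all spectral measures are supported on $[0,\infty)$ since the operators are non-negative Laplacians (Kirchhoff conditions, $q\equiv 0$), so $N$ vanishes on $(-\infty,0)$ and is therefore trivially continuous there; continuity at $0$ from the right follows because $\mu(\{0\}) \le \mu([0,\eps]) \le C'\eps \to 0$ using the Lipschitz bound on small intervals approaching $0$ from within $(0,\infty)$, provided one is slightly careful since $0 \notin J_u$ for any finite $u$ — here one takes intervals $[\delta,\eps]$ and lets $\delta \to 0$. The main obstacle I anticipate is the bookkeeping in passing from the \emph{expected} trace bound of \Thm{wegner} to an almost-sure statement about the limiting measure $\mu$: one must correctly interchange the expectation, the macroscopic limit, and the integration against test functions, and verify that the normalisation by $\vol(\Lambda(I_n),\ell_\omega)$ (which is random and $\omega$-dependent) is controlled by $\Exp{\vol(\mc F,\ell_\bullet)}\cdot\abs{I_n}$ in the limit. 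This is where the ergodic theorem underlying \Thm{ids} and the uniform length bounds $\ell_{\min},\ell_{\max}$ do the essential work.
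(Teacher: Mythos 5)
Your treatment of the Lipschitz continuity on $(0,\infty)$ matches the paper's (which simply states that it follows from \Thms{ids}{wegner}); the bookkeeping you describe --- comparing $\abs{E(\Lambda(I_n))}$ with $\vol(\Lambda(I_n),\ell_\omega)$ via the uniform length bounds and passing the expected trace bound of \Thm{wegner} through the macroscopic limit of \Thm{ids} --- is the right way to make that one-line citation precise.

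The gap is in your argument at $\lambda=0$. The constant $C$ in \Thm{wegner} depends on $u$: the final estimate in \Sec{wegner} carries an explicit factor $u$ as well as $C_{u,\ell_{\max}}$, which bounds the number of Dirichlet eigenvalues of a single edge below $u$ and therefore also grows with $u$. To cover an interval $[\delta,\eps]$ you must take $u\ge 1/\delta$, so the ``Lipschitz constant'' you invoke blows up as $\delta\to 0$; the bound $\mu([\delta,\eps])\le C(u)\,(\eps-\delta)$ gives no uniform control near $0$, and in any case $\lim_{\delta\to 0}\mu([\delta,\eps])=\mu((0,\eps])$ never sees the possible atom at $\{0\}$, which is precisely what continuity at $0$ requires you to exclude. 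That no use of \Thm{wegner} can repair this is signalled by \Rem{1d.lapl}: for a graph isometric to $\R$ the IDS behaves like $\sqrt\lambda$ near $0$, so no Lipschitz bound holds there even though the IDS happens to be continuous. The paper closes this point structurally: $N$ is the distribution function of a spectral measure of the direct integral $\int_\Omega^\oplus\lapl\omega\,\dd\Prob(\omega)$ (via the groupoid setting of~\cite{LenzPV-07}), hence a jump at $0$ would force $\ker\lapl\omega\neq\{0\}$ for almost every $\omega$; but $\lapl\omega f=0$ with Kirchhoff conditions gives $\normsqr{Df}=0$, so $f$ is constant, and constants are not square-integrable because $X$ is connected, non-compact and has $\ell_{\min}>0$, hence infinite volume. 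You need an argument of this kind --- some way to rule out an $\Lsqrsymb$-kernel of $\lapl\omega$ --- to finish the proof.
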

\begin{proof}
  The Lipschitz continuity of $N$ on $(0,\infty)$ follows immediately
  from \Thms{ids}{wegner}. It remains to prove continuity of $N$ on
  $(-\infty,0]$. Note that our model is a special situation of the
  general \emph{ergodic groupoid} setting given in~\cite{LenzPV-07}.
  Thus, $N$ is the distribution function of a spectral measure of the
  direct integral operator $\int_\Omega^\oplus \lapl \omega\, \dd
  \Prob(\omega)$. Since $\lapl \omega \ge 0$ for all $\omega$,
  $N(\lambda)$ vanishes for all $\lambda < 0$. Moreover, if $N$ would
  have a jump at $\lambda = 0$, then $\ker \lapl \omega$ would be
  non-trivial for almost all $\omega \in \Omega$. But $\lapl \omega f
  = 0$  implies 
  \begin{equation*}
     0 = \iprod f {\Delta_\omega f}
       = \int_X \Bigabssqr{\frac{df}{dx}(x)} \dd x
  \end{equation*}
  since $\lapl \omega$ has Kirchhoff vertex conditions.  Thus $f$ is a
  constant function. Now $X$ is connected as well as non-compact,
  which implies that $\vol(X,\ell_\omega) = \infty$ by the lower bound
  $\ell_{\min}$ on the lengths of the edges.  Hence constant functions
  are not in $\Lsqrsymb$. This gives a contradiction.
\end{proof}

Our result on Lipschitz continuity of $N$ on $(0,\infty)$ is optimal
in the following sense:
\begin{remark}
  \label{rem:1d.lapl}
  It is well-known that the IDS of the free Laplacian $\lapl \R$ on
  $\R$ is proportional to the square root of the energy.  Note that
  this does not change when adding Kirchhoff boundary conditions at
  arbitrary points. Therefore, every model satisfying
  \Asss{covgraph}{wegner} for a metric graph isometric to $\R$ has in
  fact the above IDS.  Therefore, we cannot expect Lipschitz
  continuity of the IDS at zero for random length models without further
  assumptions.
\end{remark}

\section{Kagome lattice as an example of a planar graph}
\label{sec:example}

In this section, we illustrate the concepts of the previous section
for an explicit example. We introduce a particular regular
tessellation of the Euclidean plane admitting finitely supported
eigenfunctions of the combinatorial Laplacian. We discuss in detail
the discontinuities of the IDS of the combinatorial Laplacian and of
the Kirchhoff Laplacian of the induced equilateral metric graph. On
the other hand, applying \Cor{wegner}, we see that the IDS of a random
family of Kirchhoff Laplacians for independent distributed edge
lengths is continuous. Thus, randomness leads to an improvement of the
regularity of the IDS in this example.

We consider the infinite planar topological graph $X \subset \C$ as
illustrated in \Fig{plagraph}. This graph is sometimes called
\emph{Kagome lattice}. Every vertex of $X$ has degree four and belongs
to a uniquely determined upside triangle. Introducing $w_1 = 1$ and
$w_2 = \e^{\pi \im/3}$, we can identify the lower left vertex of a
particular upside triangle with the origin in $\C$ and its other two
vertices with $w_1, w_2 \in \C$. Consequently, the vertex set of $X$
is given explicitly as the disjoint union of the following three sets:
\begin{equation*}
   V(X) = (2 \Z w_1 + 2 \Z w_2)\, \dcup\, 
          (w_1 + 2 \Z w_1 + 2 \Z w_2)\, \dcup\, 
          (w_2 + 2 \Z w_1 + 2 \Z w_2).
\end{equation*}
A pair $v_1,v_2 \in V = V(X)$ of vertices is connected by a straight
edge if and only if $\abs{v_2 - v_1} =1$. We write $v_1 \sim v_2$ for
adjacent vertices. The above realisation of the planar graph $X
\subset \C$ is an isometric embedding of the metric graph
$(X,\ell_0)$.
 
\begin{figure}[h]
  \begin{center}
    \psfrag{w1}{$w_1$} 
    \psfrag{w2}{$w_2$}
    \includegraphics{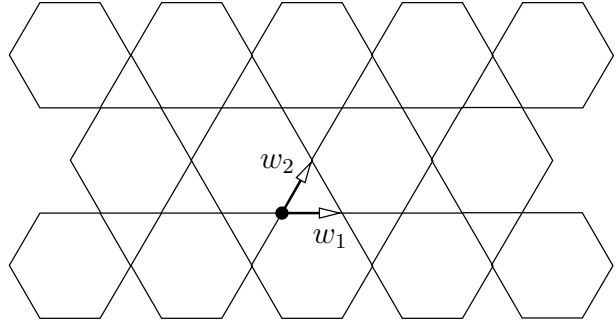}
  \end{center}
  \caption{Illustration of the planar graph $X$ (Kagome lattice).}
  \label{fig:plagraph}
\end{figure}

The group $\Z^2$ acts on $X$ via the maps $T_\gamma(x) := 2\gamma_1
w_1 + 2\gamma_2 w_2 + x$. A topological fundamental domain ${\mathcal
  F}_0$ of $X$ is thickened in \Fig{hex-graph}~(a). The set of
vertices of the topological subgraph ${\mathcal F} =
\overline{{\mathcal F}_0}$ (obtained by taking the closure of 
${\mathcal F}_0$ considered as a subset of the metric space $(X,\ell)$)
is given by $\{ a,b,c,a',b',b'',c'' \}$.

Note that we have to distinguish carefully between a topological and a
combinatorial fundamental domain. Let $G$ denote the underlying
combinatorial graph with set $V$ of vertices and $E$ of combinatorial
edges. The maps $T_\gamma$ act also on the set of vertices $V$ and a
combinatorial fundamental domain is given by $Q = \{ a,b,c \}$.  We
denote the translates $T_\gamma(Q)$ of $Q$ by $Q_\gamma$.

\begin{figure}[h]
  \begin{center}
    \psfrag{$2w_1$}{$2w_1$} 
    \psfrag{$2w_2$}{$2w_2$}
    \psfrag{$a$}{$a$}
    \psfrag{$b$}{$b$}
    \psfrag{$c$}{$c$}
    \psfrag{$a'$}{$a'$}
    \psfrag{$b'$}{$b'$}
    \psfrag{$b''$}{$b''$}
    \psfrag{$c''$}{$c''$}
    \psfrag{$b'''$}{$b'''$}
    \psfrag{(a)}{(a)}
    \psfrag{(b)}{(b)}
    \psfrag{$v_1$}{$v_1$}
    \psfrag{$v_2$}{$v_2$}
    \psfrag{$v_3$}{$v_3$}
    \psfrag{$v_4$}{$v_4$}
    \psfrag{$v_5$}{$v_5$}
    \psfrag{$w_1$}{$w_1$}
    \psfrag{$w_2$}{$w_2$}
    \psfrag{H0}{$H_{\gamma_0}$}
    \includegraphics{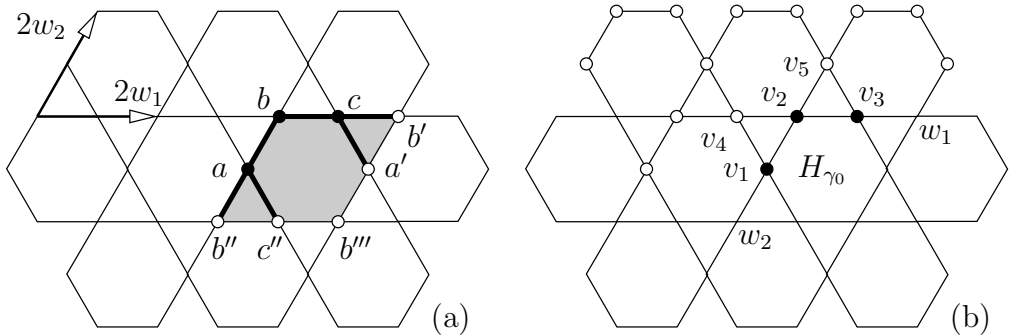}
  \end{center}
  \caption{(a) The periodic graph with thickened topological
    fundamental domain ${\mathcal F}_0$ and combinatorial fundamental
    domain $Q = \{ a,b,c \}$ (b) If $\gamma_0$ is vertically extremal
    for $F$, all white encircled vertices are zeroes of $F$.}
  \label{fig:hex-graph}
\end{figure}

\subsection{Spectrum and IDS of the combinatorial Laplacian}

We first observe that $G$ admits finitely supported eigenfunctions of
the \emph{combinatorial Laplacian} $\lapl{\comb}$: Choose an arbitrary
hexagon $H \subset X$ with vertices $\{ u_0,u_1,\dots,u_5 \}$. Then
there exists a centre $w_0 \in \C$ of $H$ such that we have
\begin{equation*}
  \{ u_0, u_1, \dots, u_5 \} 
  = \set{w_0 + \e^{k \pi \im/3}}{k = 0, 1, \dots, 5}.
\end{equation*}
The following function $\map{F_H} V {\{0,\pm1\}}$ on the vertices
\begin{equation} 
  \label{eq:FH}
  F_H(v) := 
  \begin{cases} 
     0, & \text{if $v \in V \setminus \{u_0,\dots,u_5\}$,}\\ 
    (-1)^k, & \text{if $v = w_0 + \e^{k \pi \im/3}$,}
  \end{cases}
\end{equation}
satisfies
\begin{equation*}
  \lapl \comb F_H(v) 
  = \frac 1 {\deg(v)} \sum_{w \sim v} (F_H(v) - F_H(w)) 
  = \frac32 F_H(v).
\end{equation*}
Thus, the vertices of every hexagon $H \subset X$ are the support of a
combinatorial eigenfunction $\map{F_H} V \R$. The functions $F_H$ are
the only finitely supported eigenfunctions up to linear combinations:

\begin{proposition}
  \label{prp:fineigen}
  \begin{enumerate}[(a)]
  \item
    \label{fin.a}
    Let $\map F V \R$ be a combinatorial eigenfunction on $X$ with
    finite support $\supp F \subset V$. Then
    \begin{equation*}
      \lapl \comb F = \frac32 F
    \end{equation*}
    and $F$ is a linear combination of finitely many eigenfunctions
    $F_H$ of the above type~\eqref{eq:FH}.
  \item
    \label{fin.b}
    Let $H_i \, (i=1, \dots, k)$ be a collection of distinct, albeit
    not necessarily disjoint, hexagons, and $F_i:=F_{H_i}$ the
    associated compactly supported eigenfunctions.  Then the set $F_1,
    \dots, F_k$ is linearly independent.
  \item
    \label{fin.c}
    If $ g \in \ell^2(V)$ satisfies $\Delta_{comb} g = \mu g$, then
    $\mu = 3/2$.
  \item
    \label{fin.d}
    The space of $\ell^2(V)$-eigenfunctions to the eigenvalue $3/2$ is
    spanned by compactly supported eigenfunctions.
  \end{enumerate}
\end{proposition}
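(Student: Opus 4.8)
The plan is to prove the four parts roughly in the order (a), (b), (c), (d), since (c) and (d) follow from the combinatorial structure established in the earlier parts. The key geometric fact I would exploit is the structure of the Kagome lattice: every vertex has degree four and lies in exactly one ``upside'' triangle and one ``downside'' triangle, while the hexagons $H$ are the faces of the lattice. The eigenfunctions $F_H$ are supported on the six vertices of a hexagon with alternating signs, so the first thing to record is that $\lapl{\comb} F_H = \tfrac32 F_H$, which is already computed in the text preceding the statement.

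For part (a), the approach is to show that any finitely supported eigenfunction is forced to have eigenvalue $3/2$ and to be a combination of the $F_H$. First I would argue that the eigenvalue must be $3/2$: writing the eigenvalue equation $\lapl{\comb} F = \mu F$ at a vertex $v$ where $F$ attains its maximum modulus on its (finite) support, and using that $\lapl{\comb}$ is an averaging operator with $\lapl{\comb} F(v) = F(v) - \frac{1}{\deg v}\sum_{w\sim v} F(w)$, one obtains sign and magnitude constraints. A cleaner route is to use the specific local geometry: look at a vertex $v$ on the ``boundary'' of $\supp F$ (maximal in some chosen direction, as suggested by \Fig{hex-graph}~(b), where $\gamma_0$ is called \emph{vertically extremal}). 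The extremality forces several neighbouring values to vanish, and propagating this constraint through the triangles pins down both the eigenvalue and the alternating-sign pattern on each hexagon. Concretely, I expect to show that $F$ restricted to each triangle satisfies a local relation that, combined with vanishing outside a finite set, can only be solved by superpositions of the hexagon functions. Then one subtracts off suitable multiples of $F_H$ to reduce the support and induct on $|\supp F|$.

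For part (b), linear independence of the $F_{H_i}$, the natural idea is to use the extremal-hexagon trick: among finitely many distinct hexagons $H_1,\dots,H_k$, choose one, say $H_j$, that is extremal (e.g.\ vertically topmost, as in \Fig{hex-graph}~(b)), so that it possesses a vertex not shared by any other $H_i$. Evaluating a vanishing linear combination $\sum_i c_i F_{H_i}$ at that private vertex forces the corresponding coefficient $c_j = 0$, and one proceeds by induction on $k$. Part (c) then follows from (a) together with a density/approximation argument: any $\ell^2$-eigenfunction $g$ can be approximated, and since the only eigenvalue admitting compactly supported eigenfunctions is $3/2$, and the lattice is such that finitely supported eigenfunctions are dense in the $\ell^2$ eigenspace, any genuine $\ell^2$-eigenvalue coincides with $3/2$; alternatively one shows directly that the point spectrum of $\lapl{\comb}$ in $\ell^2(V)$ is $\{3/2\}$ apart from the bands. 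Part (d) asserts the $\ell^2$-eigenspace at $3/2$ is spanned by the compactly supported $F_H$, which I would deduce by showing that the orthogonal complement (within the eigenspace) of the span of all $F_H$ is trivial, again using the local triangle relations from (a).

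The main obstacle I anticipate is part (a): rigorously showing that \emph{every} finitely supported eigenfunction has eigenvalue exactly $3/2$ and decomposes into hexagon functions. The extremality argument must be carried out carefully so that the vanishing of boundary values genuinely propagates and does not leave room for exotic finitely supported solutions at other eigenvalues. The delicate point is organising the induction on the support: after subtracting hexagon functions to kill the extremal hexagon, one must verify that the remaining function is still a finitely supported eigenfunction with strictly smaller support, so that the induction terminates. For part (d), the subtlety is the passage from ``finitely supported eigenfunctions span a dense subspace'' to ``they span the whole eigenspace,'' which requires either a closedness argument or an explicit demonstration that no additional (non-compactly-supported) $\ell^2$ eigenfunction can be orthogonal to all the $F_H$.
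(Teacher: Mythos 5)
Your outline for parts (a) and (b) follows essentially the paper's route: for (a) the paper also selects a \emph{vertically extremal} cell $\gamma_0$ of the support, uses the eigenvalue equation at the vertices where $F$ is forced to vanish to obtain $F(v_1)=-F(v_2)=F(v_3)\neq 0$ and hence the eigenvalue $3/2$, and then subtracts a multiple of $F_{H_{\gamma_0}}$; for (b) it likewise finds a vertex lying in exactly one hexagon of the collection (a vertex of $\bigcup_i H_i$ adjacent to the complement) and peels off coefficients, which is the same idea as your ``private vertex of an extremal hexagon''. However, your proposed \emph{induction on $|\supp F|$} in (a) does not work as stated, and you have sensed the problem without resolving it: subtracting $F(v_1)F_{H_{\gamma_0}}$ clears the cell $Q_{\gamma_0}$ but can \emph{create} new nonzero values on up to two cells in the row below ($Q_{\gamma_0-\eps_2}$ and $Q_{\gamma_0+\eps_1-\eps_2}$), so the support need not shrink and a bare induction on its cardinality has no decreasing quantity. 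The paper's fix is global bookkeeping by rows: sweeping the top row from left to right only pollutes the row immediately beneath, so the number of occupied rows strictly decreases with each full sweep, and the observation that a nonzero finitely supported eigenfunction cannot be supported in a single row (its support must extend downward) forces the process to end with the zero function. Some such organisation is indispensable. Also, your first suggestion for pinning down the eigenvalue (evaluating at a vertex of maximal modulus) would at best give a maximum-principle bound, not the value $3/2$; only your second, extremal-vertex argument does the job.

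The more serious gap is in (c) and (d). For (c) you invoke ``finitely supported eigenfunctions are dense in the $\ell^2$ eigenspace''; but that is precisely statement (d), which you have not proved, so the argument is circular. Moreover (d) is not a soft closedness or orthogonality argument: an $\ell^2$-eigenfunction orthogonal to all $F_H$ has no reason to be finitely supported, so the local extremal-vertex relations from (a) simply do not apply to it, and your proposed ``show the orthogonal complement is trivial using the triangle relations'' has no mechanism behind it. What is actually needed --- and what the paper invokes --- is a nontrivial general theorem for periodic (amenable-covering) combinatorial operators: every $\ell^2$-eigenvalue admits a \emph{finitely supported} eigenfunction, and the finitely supported eigenfunctions span the $\ell^2$-eigenspace. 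For (c) the paper deduces from the jump of the IDS (a spectral measure by \cite[Prop.~5.2]{LenzPV-07}) and \cite[Prop.~5.2]{Veselic-05b} the existence of a compactly supported eigenfunction to the eigenvalue $\mu$, whence $\mu=3/2$ by (a); for (d) it cites \cite[Thm.~2.2]{LenzV} (alternatively Kuchment's Floquet-theoretic result). Without supplying this input, or an explicit Floquet computation, parts (c) and (d) remain unproved.
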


\begin{proof} To prove~\eqref{fin.a}, assume that $\map F V \R$ is a
  finitely supported eigenfunction. Let $Q = \{ a,b,c \}$ be a
  combinatorial fundamental domain of $\Z^2$, as illustrated in
  \Fig{hex-graph}~(a) and $Q_\gamma := T_\gamma(Q)$. Let $H_\gamma$ be
  the uniquely defined hexagon containing the three vertices
  $Q_\gamma$. Moreover, we define
  \begin{equation*}
    A_0 := \set{\gamma \in \Z^2}{\supp F \cap Q_\gamma \neq \emptyset}.
  \end{equation*}
  Let $\eps_1 = (1,0)$ and $\eps_2 = (0,1)$. We say that $\gamma_0
  =(\gamma_{01},\gamma_{02}) \in A_0$ is \emph{vertically extremal} for $F$,
  if the second coordinate $\gamma_{02}$ is maximal amongst all
  $\gamma \in A_0$ and if $\gamma_0 - \eps_1 \notin A_0$. This means
  that $F$ vanishes in the left neighbour of $Q_{\gamma_0}$ and in all
  vertices vertically above $Q_{\gamma_0}$. Hence, $\gamma_0$ in
  \Fig{hex-graph} (b) is vertically extremal if $F$ vanishes in all
  white encircled vertices and does not vanish in at least one of the black
  vertices. Obviously, $A_0$ has always vertically extremal elements.
  Choosing such a $\gamma_0 \in A_0$, we will show below that $F$ is an
  eigenfunction with eigenvalue $3/2$ and that the following
  facts hold:
  \begin{itemize}
  \item[(i)] $\gamma_0 + \eps_1$ belongs to $A_0$,
  \item[(ii)] $\gamma_0 - \eps_2$ or $\gamma_0 - \eps_2 - \eps_1$
    belong to $A_0$,
  \item[(iii)] adding a suitable multiple of $F_{H_{\gamma_0}}$ to
    $F$, we obtain a new eigenfunction $F_1$ and a set $A_1 :=
    \set{\gamma \in \Z^2}{\supp F_1 \cap Q_\gamma \neq \emptyset}$
    satisfying
    \begin{equation*}
      \gamma_0 \notin A_1, \quad 
      A_1 \setminus A_0 \subset 
         \{ \gamma_0-\eps_2, \gamma_0+\eps_1-\eps_2 \}.
    \end{equation*}
  \end{itemize}

  To see this, let $\gamma_0 \in A_0$ be vertically extremal and
  $v_1,\dots,v_5,w_1,w_2$ be chosen as in \Fig{hex-graph}~(b). The
  eigenvalue equation at the vertices $v_4$ and $v_5$, in which $F$
  vanishes, imply that we have $F(v_1) = - F(v_2) = F(v_3) \neq 0$.
  Applying the eigenvalue equation again, now at $v_2$, yields that
  the eigenvalue of $F$ must be $3/2$.

  If $\gamma_0 + \eps_1 \notin A_0$, $F$ would vanish in $w_1$ and
  all its neighbours, except for $v_3$. This would contradict to the
  eigenvalue equation at $w_1$ and~(i) is proven. Similarly, if
  $\gamma_0 -\eps_2, \gamma_0 - \eps_2 - \eps_1 \notin A_0$, we would
  obtain a contradiction to the eigenvalue equation at the vertex
  $w_2$. This proves~(ii).
  
  By adding $F(v_1) F_{H_{\gamma_0}}$ to $F$, we obtain a new
  eigenfunction $F_1$ (again to the eigenvalue $3/2$) which vanishes
  at all vertices of $Q_{\gamma_0} = \{ v_1, v_2, v_3 \}$. Thus we
  have $\gamma_0 \notin A_1$. But $F$ and $F_1$ differ only in the
  vertices $Q_{\gamma_0}$, $Q_{\gamma_0+\eps_1}$,
  $Q_{\gamma_0-\eps_2}$ and $Q_{\gamma_0+\eps_1-\eps_2}$, establishing
  property~(iii).

  The above procedure can be iteratively (from left to right) applied
  to the hexagons in the top row of $A_0$: Step (iii) can be applied
  to the function $F_1$ and a vertically extremal element of $A_1$.
  After a finite number $n$ of steps the top row of hexagons in $A_0$
  is no longer in the support of the function $F_n$. (Note that
  property (i) implies that when removing the penultimate hexagon form
  the right, one has simultaneously removed the rightermost one, too.)
  Again, this procedure can be iterated removing successively rows of
  hexagons. This time property (ii) guarantees that the procedure
  stops after a finite number $N$ of steps with $F_N\equiv 0$. We have
  proven statement~\eqref{fin.a}.

  Now we turn to the proof of~\eqref{fin.b}.  Since the graph is
  connected there exists a vertex $v$ in $A:=\cup_{i=1}^k H_i$ which
  is adjacent to some vertex outside $A$. Then $v$ is contained in
  precisely one hexagon $H_{i_0}$.  (In the full graph each vertex is
  in two hexagons.)  Thus the condition
  \begin{equation}
    \label{e_lin-comb} 
    \sum_{i=1}^k \alpha_i F_i = 0   \quad \alpha_i \in \CC
  \end{equation}
  evaluated at the vertex $v$ implies $\alpha_{i_0}=0$. This shows
  that all coefficients $\alpha_i$ in \eqref{e_lin-comb} corresponding
  to hexagons $H_i$ lying at the boundary of $A$ vanish. This leads to
  an equation analogous to \eqref{e_lin-comb} where the indices in the
  sum run over a strict subset of $\{1,\dots,k\}$.  Now one iterates
  the pocedure and shows that actually all coefficients
  $\alpha_1,\dots,\alpha_k$ in \eqref{e_lin-comb} are zero. We have
  shown linear independence of $F_1, \dots, F_k$.

  To prove~\eqref{fin.c} we recall that the IDS $\Delta_{\rm comb}$ is
  a spectral measure (see e.g.~\cite[Prop.~5.2]{LenzPV-07}). Thus the
  IDS jumps at the value $\mu$.  This in turn implies by
  \cite[Prop.~5.2]{Veselic-05b} that there is a compactly supported
  $\tilde g$ satisfying the eigenvalue equation. Now~\eqref{fin.a}
  implies $\mu = 3/2$.

  Statement~\eqref{fin.d} follows from~\cite[Thm.~2.2]{LenzV}, cf.
  also the proof of \Prp{combids}.
\end{proof}

We are primarily interested in $\lsqrsymb$-eigenfunctions of $\lapl
\comb$, since their eigenvalues coincide with the
\emph{discontinuities} of the corresponding IDS. For combinatorial
covering graphs with \emph{amenable} covering group $\Gamma$, every
$\lsqrsymb$-eigenfunction $F$ implies the existence of a
\emph{finitely supported} eigenfunction to the same eigenvalue which
is implied, e.g., by~\cite[Prop.~5.2]{Veselic-05b} or
\cite[Thm.~2.2]{LenzV}.  (Related, but different results have been
obtained before in~\cite{mathai-yates:02}. If the group is even 
abelian, as is the case for the Kagome lattice, the analogous result 
was proven even earlier in~\cite{kuchment:91}.) 
 It should be mentioned
here that the situation is very different in the smooth category of
Riemannian manifolds. There, compactly supported eigenfunctions cannot
occur due to the \emph{unique continuation principle}. In the discrete
setting of graphs, non-existence of finitely supported combinatorial
eigenfunctions is --- at present --- only be proved for particular
examples or in the case of planar graphs of non-positive combinatorial
curvature; see~\cite{klps:06} for more details.  Hence, \Prp{fineigen}
tells us that $X$ does not admit combinatorial
$\lsqrsymb$-eigenfunctions associated to eigenvalues $\mu \neq 3/2$.

Next, let us discuss spectral informations which can be obtained with
the help of \emph{Floquet theory}. Using a general result of Kuchment
(see~\cite{kuchment:91} or~\cite[Thm.~8]{kuchment:05}) for periodic
finite difference operators (applying Floquet theory to such
operators) we conclude that the \emph{compactly supported}
eigenfunctions of $\lapl \comb$ associated to the eigenvalue $3/2$ are
already dense in the whole eigenspace $\ker (\lapl \comb - 3/2)$.
As for the whole spectrum, we derive the following result:

\begin{proposition}
  \label{prp:combspec} 
  Denote by $\spec[ac]{\lapl \comb}$ and $\spec[p]{\lapl \comp}$ the
  absolutely continuous and point spectrum of $\lapl \comb$ on our
  $\Z^2$-periodic graph $X$. Then we have
  \begin{equation*}
    \spec[ac]{\lapl \comb}
    = \Bigl[0,\frac{3}{2} \Bigr] \quad  \text{and} \quad
    \spec[p]{\lapl \comb} 
    = \Bigl\{ \frac 32 \Bigr\}.
  \end{equation*}
\end{proposition}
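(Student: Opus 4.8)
The plan is to diagonalise $\lapl\comb$ by Floquet--Bloch theory and read off both pieces of spectrum from the resulting band functions. Since every vertex of $X$ has degree four, the combinatorial Laplacian is $\lapl\comb = I - \tfrac14 A$, where $A$ is the adjacency operator on $\ell^2(V)$; hence $\lambda \in \spec{\lapl\comb}$ if and only if $\mu = 4(1-\lambda) \in \spec A$, so it suffices to understand $A$. The group $\Z^2$ acts freely with the three-vertex combinatorial fundamental domain $Q = \{a,b,c\}$, so the Floquet transform unitarily identifies $\ell^2(V)$ with $\int_{\Torus^2}^\oplus \C^3\,\dd\theta$ and carries $A$ to multiplication by a real-analytic family of Hermitian $3\times3$ matrices $A(\theta)$, $\theta \in \Torus^2 = [0,2\pi)^2$.

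First I would write down $A(\theta)$ explicitly. Each of the vertices $a,b,c$ of the up-triangle $Q$ is joined to the other two vertices of its own cell and to their copies in two neighbouring cells, so the off-diagonal entries of $A(\theta)$ are of the form $1 + \e^{\pm\im\theta_k}$ and $1 + \e^{\pm\im(\theta_1-\theta_2)}$. Computing the characteristic polynomial of $A(\theta)$, one finds that it splits off the constant root $\mu \equiv -2$: this is the \emph{flat band}, and it is precisely the adjacency eigenvalue of the hexagon eigenfunctions, since $A F_H = -2 F_H$, equivalently $\lapl\comb F_H = \tfrac32 F_H$ (cf.\ \Prp{fineigen}). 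The remaining two roots are the \emph{dispersive bands} $\mu_\pm(\theta) = 1 \pm \sqrt{\,3 + 2(\cos\theta_1 + \cos\theta_2 + \cos(\theta_1-\theta_2))\,}$, which are real-analytic and non-constant.

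From here the two claims follow from the standard Floquet dichotomy (see the Kuchment references cited above): a non-constant real-analytic band contributes purely absolutely continuous spectrum equal to its range, while a constant band contributes an eigenvalue of infinite multiplicity. The radicand ranges over $[0,9]$, so $\mu_+ \in [1,4]$ and $\mu_- \in [-2,1]$; translating through $\lambda = 1 - \mu/4$ gives dispersive contributions $[0,\tfrac34]$ and $[\tfrac34,\tfrac32]$, whose union is $[0,\tfrac32]$, establishing $\spec[ac]{\lapl\comb} = [0,\tfrac32]$. For the point spectrum, \Prp{fineigen}\,(c) already shows that the only possible $\ell^2$-eigenvalue is $3/2$, while the compactly supported $F_H$ (equivalently, the flat band itself) show that $3/2$ genuinely is an eigenvalue; hence $\spec[p]{\lapl\comb} = \{3/2\}$.

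The main obstacle is the explicit Bloch computation together with the range analysis: one must fix the correct phase conventions in $A(\theta)$, verify that the characteristic polynomial really does factor off $(\mu+2)$ for every $\theta$, and check that the radicand attains the full interval $[0,9]$, so that the dispersive bands sweep out exactly $[0,\tfrac32]$ and, in particular, $\mu_-$ touches the flat band at $\theta = 0$, producing the embedded eigenvalue $\tfrac32$ at the top of the absolutely continuous spectrum. The invocation of the Floquet dichotomy itself is routine and may simply be quoted.
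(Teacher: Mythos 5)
Your proposal is correct and follows essentially the same route as the paper: Floquet--Bloch reduction to a $3\times3$ fibre matrix over $\Torus^2$, identification of the flat band at $3/2$ and of the two dispersive bands $\tfrac34\mp\tfrac14\sqrt{3+2\kappa}$ with $\kappa=\cos\theta_1+\cos\theta_2+\cos(\theta_1-\theta_2)\in[-\tfrac32,3]$, yielding $[0,\tfrac34]\cup[\tfrac34,\tfrac32]$. The only (cosmetic) difference is that you conjugate through the adjacency operator via $\lapl\comb=I-\tfrac14A$, whereas the paper computes the characteristic polynomial of $\lapl[\theta]\comb$ directly; your band ranges and the factorisation of the constant root agree with the paper's after the affine change of variable.
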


The proof follows from standard Floquet theory (for a similar
hexagonal graph model see~\cite{kuchment-post:07}):

\begin{proof}
  Note that we have the unitary equivalence
  \begin{equation*}
    \lapl \comb \cong \int_{\Torus^2}^\oplus \lapl[\theta] \comb \dd \theta,
  \end{equation*}
  where $\lapl[\theta] \comb$ is the $\theta$-equivariant Laplacian on
  $Q$, $\theta \in \Torus^2:=\R^2/(2\pi \Z)^2$. This operator is
  equivalent to the matrix
  \begin{equation*}
    \lapl[\theta] \comb \cong \frac 14
    \begin{pmatrix}
      4 & -1-\e^{-\im \theta_2} & -\e^{-\im \theta_1}-\e^{-\im \theta_2}\\
      -1-\e^{\im \theta_2} & 4 & -1-\e^{-\im \theta_1}\\
      -\e^{\im \theta_1}-\e^{\im \theta_2} & -1-\e^{\im \theta_1} & 4
    \end{pmatrix}
  \end{equation*}
  using the basis $F \cong (F(a),F(b),F(c))$ for a function on $Q$ and
  the fact that $F(T_\gamma v)= \e^{\im \iprod \theta \gamma} F(v)$
  (equivariance). The characteristic polynomial is
  \begin{equation*}
    p(\mu) = \Bigl(\mu - \frac 32 \Bigr) 
           \bigg( \Bigl(\mu - \frac 34 \Bigr)^2 
           - \frac{3+2\kappa}{16}\bigg),
  \end{equation*}
  where $\kappa=\cos\theta_1 + \cos \theta_2 + \cos
  (\theta_1-\theta_2)$, and the eigenvalues of $\lapl[\theta] \comb$
  are
  \begin{equation*}
    \mu_1 = \frac 32 \quad \text{and }
    \mu_\pm = \frac 34 \pm \frac 14\sqrt{3 + 2\kappa}.
  \end{equation*}
  In particular, we recover the fact that $\lapl \comb$ has an
  eigenfunction, since $\mu_1$ is independent of $\theta$, only
  $\mu_\pm$ depend on $\theta$ via $\kappa=\kappa(\theta)$. Note
  that we have
  \begin{equation*}
    - \frac 32
    = \kappa \Bigl( \frac{2\pi} 3, \frac{4\pi} 3 \Bigr)
    \le \kappa(\theta)
    \le \kappa(0,0)
    = 3,
  \end{equation*}
  giving the spectral bands $B_- = [0,3/4]$ and $B_+ = [3/4, 3/2]$.
\end{proof}

The next result discusses (dis)continuity properties of the IDS
associated to the combinatorial Laplacian on $X$: 

\begin{proposition}
  \label{prp:combids}
  Let $N_\comb$ be the (abstract) IDS of the $\Z^2$-periodic operator
  $\lapl \comb$, given by
  \begin{equation*}
    N_\comb(\mu) = \frac{1}{|Q|} \tr [\1_Q P_\comb((-\infty,\mu])],
  \end{equation*}
  where $\tr$ is the trace on the Hilbert space $\lsqr {V}$ and
  $P_\comb$ denotes the spectral projection of $\lapl \comb$. Then
  $N_\comb$ vanishes on $(-\infty,0]$, is continuous on $\R \setminus
  \{ 3/2 \}$ and has a jump of size $1/3$ at $\mu = 3/2$. Moreover,
  $N_\comb$ is strictly monotone increasing on $[0,3/2]$ and
  $N_\comb(\mu) = 1$ for $\mu \ge 3/2$.
\end{proposition}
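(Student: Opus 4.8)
The plan is to compute $N_\comb$ explicitly from the Floquet--Bloch picture established in \Prp{combspec} and to read off the five assertions band by band. Recall that under the unitary equivalence $\lapl \comb \cong \int_{\Torus^2}^\oplus \lapl[\theta]\comb\,\dd\theta$ the fibre is a $3\times 3$ matrix with the flat band $\mu_1(\theta)\equiv\frac32$ and the two dispersive bands $\mu_\pm(\theta)=\frac34\pm\frac14\sqrt{3+2\kappa(\theta)}$, where $\kappa(\theta)=\cos\theta_1+\cos\theta_2+\cos(\theta_1-\theta_2)$ has image $[-\frac32,3]$. Since $\abs Q=3$ and each fibre acts on $\C^3$, the first step is to invoke the standard Floquet representation of the trace per unit cell, decomposing $P_\comb((-\infty,\mu])$ into its fibres $P_\theta((-\infty,\mu])$ on $\C^3$ and using $\tr P_\theta((-\infty,\mu]) = \#\set{j}{\mu_j(\theta)\le\mu}$, to obtain
\begin{equation*}
  N_\comb(\mu)
  = \frac13\sum_{j\in\{-,+,1\}}
     \frac{\leb\bigl(\set{\theta\in\Torus^2}{\mu_j(\theta)\le\mu}\bigr)}{\leb(\Torus^2)}.
\end{equation*}

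With this formula the flat band $\mu_1\equiv\frac32$ (equivalently, the compactly supported eigenfunctions of \Prp{fineigen}) contributes the term $\frac13\,\1_{[3/2,\infty)}(\mu)$: it vanishes for $\mu<\frac32$ and jumps to $\frac13$ at $\mu=\frac32$, which is the asserted jump of size $\frac13$. For the two dispersive terms $\frac13\,\leb\bigl(\set{\theta}{\mu_\pm(\theta)\le\mu}\bigr)/\leb(\Torus^2)$ I would prove continuity in $\mu$ by showing that each level set $\set{\theta}{\mu_\pm(\theta)=\mu}$ has Lebesgue measure zero, which holds because $\kappa$, hence each $\mu_\pm$, is a non-constant real-analytic function on the connected torus. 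This yields continuity of $N_\comb$ on $\R\setminus\{\frac32\}$ and also continuity of the dispersive part at $\frac32$, so the flat-band atom is the only discontinuity. For $\mu\ge\frac32$ all three sets equal the full torus, whence $N_\comb(\mu)=\frac13\cdot 3=1$.

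For the vanishing on $(-\infty,0]$ I would note that all three band functions are $\ge 0$, so every set in the formula is empty for $\mu<0$; at $\mu=0$ the only possibly nonempty set is $\set{\theta}{\mu_-(\theta)\le 0}=\set{\theta}{\kappa(\theta)=3}=\{(0,0)\}$, a single point, so $N_\comb(0)=0$. Strict monotonicity on $[0,\frac32]$ uses that $\mu_-$ maps $\Torus^2$ onto $[0,\frac34]$ and $\mu_+$ onto $[\frac34,\frac32]$: given $0\le s<t\le\frac32$, the open interval $(s,t)$ meets the interior of at least one of these ranges, say that of $\mu_-$; choosing $c\in(s,t)\cap(0,\frac34)$ and a point $\theta_0$ with $\mu_-(\theta_0)=c$, continuity makes $\mu_-^{-1}((s,t))$ a nonempty open subset of $\Torus^2$, hence of positive measure, so $N_\comb(t)-N_\comb(s)>0$.

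The \emph{main obstacle} is the measure-zero property of the level sets of $\mu_\pm$, on which both the continuity away from $\frac32$ and the strict monotonicity rest; the clean route is real-analyticity of $\kappa$ on $\Torus^2$, since a non-constant real-analytic function on a connected real-analytic manifold has zero sets of measure zero, and the intermediate value theorem on the connected torus then guarantees that every subinterval of the open band ranges is attained on a set of positive measure. A secondary point to check carefully is the correct normalisation in the Floquet trace formula, so that the three bands yield total mass $1$ and the atom has weight exactly $\frac13$, together with the minor observation that the two dispersive bands meet rather than leave a gap at $\mu=\frac34$, which is immediate since $\mu_\pm=\frac34$ precisely when $\kappa=-\frac32$.
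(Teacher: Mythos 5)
Your proof is correct, but it takes a genuinely different route from the paper's. The paper does not compute $N_\comb$ explicitly: it quotes two abstract facts from the literature (the points of increase of $N_\comb$ coincide with $\spec{\lapl \comb}$, and discontinuities can occur only at eigenvalues), which together with \Prp{combspec} already yield every assertion except the size of the jump; the jump size $1/3$ is then obtained by a real-space exhaustion argument --- a F\o lner sequence $\Lambda_n$, the identity $D(\mu)=\lim_{n}\dim E_n(\mu)/|\Lambda_n|$ proved via a boundary-map rank estimate, and the count (using the linear independence from \Prpenum{fineigen}{fin.b}) that the hexagon eigenfunctions supported well inside $\Lambda_n$ number $\tfrac13|\Lambda_n|$ up to a boundary term killed by the van Hove property. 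You instead diagonalise in momentum space: the Floquet trace-per-unit-cell formula turns $N_\comb$ into an explicit average over $\Torus^2$ of fibre counting functions, the flat band $\mu_1\equiv 3/2$ produces the atom of mass exactly $1/3$, and the measure-zero level sets of the non-constant real-analytic $\kappa$ give continuity and strict monotonicity everywhere else. (One small point: $\mu_\pm$ themselves fail to be analytic where $3+2\kappa=0$, but every level set of $\mu_\pm$ is contained in a level set of $\kappa$, so your reduction to $\kappa$ is the right way to phrase it.) Your argument is more self-contained and quantitative for this example, since the fibre eigenvalues were already computed in the proof of \Prp{combspec}, and it delivers all five assertions in one formula; the paper's argument is less explicit but is designed to generalise to amenable, possibly non-abelian covering groups where no Floquet decomposition is available (cf.\ \Rem{ids.gen.per.gr}), and it runs in parallel with the metric-graph computation in \Prp{kirchids}, which is why the authors set it up that way.
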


\begin{proof}
  The following facts are given, e.g.,
  in~\cite[p.~119]{mathai-yates:02}:
  \begin{enumerate}
  \item the points of increase of $N_\comb$ coincide with the spectrum
    $\sigma(\lapl \comb)$ and
  \item $N_\comb$ can only have discontinuities at $\sigma_p(\lapl
    \comb)$.
  \end{enumerate}
  Together with \Prp{combspec}, all statements of the proposition
  follow, except for the size of the jump at $\mu = 3/2$.

  Let us choose a F\o lner sequence $I_n \subset \Z^2$ and define
  $\Lambda_n = \bigcup_{\gamma \in I_n} Q_\gamma$.  Let $\bd
  \Lambda_n$ denote the set of boundary vertices of the combinatorial
  graph induced by the vertex set $\Lambda_n$, and
   \begin{equation}
     \label{eq:comb.bd}
     \bd_r \Lambda_n := \set{v \in V(X)}{d(v,\bd \Lambda_n) \le r}
   \end{equation}
   be the thickened (combinatorial) boundary.  Let
  \begin{equation}
    \label{eq:dlambda}
    D(\mu) :=
    N_\comb(\mu) - \lim_{\eps \to 0}
       N_\comb(\mu -\eps)
    = \frac 1 {|\Lambda_n|} 
            \tr \bigl[\1_{\Lambda_n} P_\comb(\{\mu\})\bigr].
  \end{equation} 
  The last equality in~\eqref{eq:dlambda} holds for all $n$ and
  follows easily from the $\Z^2$-invariance of the operator $\lapl
  \comb$. It remains to prove that $D(3/2) = 1/3$. Let $\Lambda_n' =
  \Lambda_n \setminus \bd_1 \Lambda_n$ and
  \begin{equation*}
    D_n(\mu) := \frac{1}{| \Lambda_n |} \dim E_n(\mu),
  \end{equation*}
  where $E_n(\mu) := \set{F \in \ker(\lapl \comb - \mu)} {\supp F
    \subset \Lambda_n'}$. Arguments as
  in~\cite{msy:03} or in~\cite{LenzV} show
  that
  \begin{equation}
    \label{eq:DnD} 
    D(\mu) = \lim_{n \to \infty} D_n(\mu). 
  \end{equation}
  For the convenience of the reader, we outline the proof
  of~\eqref{eq:DnD} below. Using part (b) of Proposition
  \ref{prp:fineigen} one can show that $\dim E_n(\mu)$ equals up to a
  boundary term the number of hexagons contained in $\Lambda_n'$.
  Since every translated combinatorial fundamental domain $Q_\gamma$
  uniquely determines a hexagon $H_\gamma$ and $|Q| = 3$, we conclude
  that $\dim E_n(\mu) \approx \frac 13 | \Lambda_n |$, up to an error
  proportional to $| \bd_1 \Lambda_n |$. The van Hove
  property~\eqref{eq:isop0} (which holds also in the combinatorial
  setting) then implies the desired result $D(3/2) = \lim_{n \to
    \infty} D_n(3/2) = 1/3$.

  Finally, we outline the proof of~\eqref{eq:DnD}: Let $E(\mu) =
  \ker(\lapl \comb - \mu)$ and $S_n(\mu) = \1_{\Lambda_n} E(\mu)$. Let
  $\map {b_n} {S_n(\mu)} {\R^{|\bd_1 \Lambda_n|}}$ be the boundary
  map, i.e., $b_n(F)$ is the collection of all values of $F$ assumed
  at the (thickened) boundary vertices $\bd_1 \Lambda_n$. Then $\ker
  b_n = E_n(\mu) \subset S_n(\mu)$, and we have
  \begin{equation*}
    D_n(\mu) \le D(\mu)
    \le \frac{\dim S_n(\mu)}{|\Lambda_n|}
    = \frac{\dim \ker b_n}{|\Lambda_n|} 
    + \frac{\dim \ran\, b_n}{|\Lambda_n|}
    \le D_n(\mu) + \frac{|\bd_1 \Lambda_n|}{|\Lambda_n|},
  \end{equation*}
  which yields~\eqref{eq:DnD}, by taking the limit, as $n \to \infty$.
\end{proof}

\subsection{Spectrum and IDS of the periodic Kirchhoff Laplacian}

There is a well known correspondence between the spectrum $\spec{\lapl
  \comb}$ on a graph $G$ and the spectrum of the (Kirchhoff) Laplacian
$\lapl 0$ on the corresponding (equilateral) metric graph $(X,\ell_0)$
with $\ell_0 = \1_E$ (see e.g.~\cite{von-below:85,nicaise:85,
  cattaneo:97,bgp:08,post:08a} and the references therein).  Namely,
any $\lambda \neq k^2 \pi^2$ lies in $\spec[p] {\lapl 0}$ resp.\
$\spec[ac]{\lapl 0}$ iff $\mu(\lambda) = 1 - \cos \sqrt{\lambda}$ lies
in $\spec[p]{\lapl \comb}$ resp.\ $\spec[ac]{\lapl \comb}$. Moreover,
the eigenspace of the metric Laplacian is isomorphic to the
corresponding eigenspace of the combinatorial Laplacian.

Let $\map F V \C$ be a finitely supported eigenfunction of $\lapl
\comb$ as in the previous section. In particular, the eigenvalue must
be $\mu=3/2$.  The above mentioned correspondence shows that, for
every $\lambda = (2 k + 2/3)^2 \pi^2$, $k \in \Z$, (i.e.
$\mu(\lambda)=3/2)$), there is a Kirchhoff eigenfunction $\map f X \R$
of compact support associated to the eigenvalue $\lambda$, satisfying
$f(v) = F(v)$ at all vertices $v \in V$. In addition, if $\lambda =
k^2 \pi^2$, there are so-called \emph{Dirichlet eigenfunctions} of
$\lapl 0$, determined by the topology of the graph (see
e.g.~\cite{von-below:85,nicaise:85,kuchment:05, lledo-post:08b}),
which are also generated by compactly supported eigenfunctions.

Using the results~\cite{cattaneo:97,bgp:08}, we conclude from
\Prp{combspec}:
\begin{corollary}
  \label{cor:kirchspec}
  Let $\lapl 0$ denote the Kirchhoff Laplacian of the equilateral
  metric graph $(X,\ell_0)$. Let $\sigma_{\mathrm p}$ and
  $\sigma_{\mathrm{ac}}$ denote the point spectrum and absolutely
  continuous spectrum and $\sigma_\comp$ denote the spectrum given by
  the compactly supported eigenfunctions. Then we have
  \begin{equation*}
    \spec[comp]{\lapl 0}
    = \spec[p] {\lapl 0}
    = \Bigset{\Bigl(2k+\frac23 \Bigr)^2 \pi^2} {k \in \Z} 
    \cup \bigset{k^2 \pi^2} {k \in \N}
  \end{equation*}
  and
  \begin{equation}
    \label{eq:kirch.ac}
    \spec[ac] {\lapl 0}
       =\Bigl[ 0, \Bigl(\frac 23 \Bigr)^2 \pi^2 \Bigr]  \cup
        \bigcup_{k \in \N} 
            \Bigl[ \Bigl(2k-\frac 23 \Bigr)^2\pi^2,
                   \Bigl(2k+\frac 23 \Bigr)^2\pi^2 \Bigr].
  \end{equation}
\end{corollary}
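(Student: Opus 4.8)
The plan is to transport the combinatorial spectral information of \Prp{combspec} to the equilateral metric graph through the correspondence recalled just above the statement: for $\lambda \neq k^2\pi^2$ one has $\lambda \in \spec[p]{\lapl 0}$ (respectively $\lambda \in \spec[ac]{\lapl 0}$) if and only if $\mu(\lambda) = 1 - \cos\sqrt\lambda$ lies in $\spec[p]{\lapl \comb}$ (respectively $\spec[ac]{\lapl \comb}$), the two eigenspaces then being isomorphic. The energies $\lambda = k^2\pi^2$, where this correspondence is silent, will be dealt with separately, since they are precisely the points carrying the topological Dirichlet eigenfunctions.

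First I would read off the point spectrum away from the exceptional set. Since $\spec[p]{\lapl \comb} = \{3/2\}$ by \Prp{combspec}, I solve $1 - \cos\sqrt\lambda = 3/2$, i.e.\ $\cos\sqrt\lambda = -1/2$; its positive solutions are the numbers $\sqrt\lambda = |(2k+2/3)\pi|$, $k \in \Z$, so that $\lambda \in \{(2k+2/3)^2\pi^2 : k \in \Z\}$, which is the first set in the claimed point spectrum. Pulling back $\spec[ac]{\lapl \comb} = [0,3/2]$ amounts to the condition $\cos\sqrt\lambda \in [-1/2,1]$; writing out the intervals of $\sqrt\lambda \ge 0$ on which $\cos$ stays at or above $-1/2$ and squaring produces exactly the bands $[0,(2/3)^2\pi^2]$ and $[(2k-2/3)^2\pi^2,(2k+2/3)^2\pi^2]$, $k \in \N$, of~\eqref{eq:kirch.ac}.

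Next I would treat the exceptional energies $\lambda = k^2\pi^2$, $k \in \N$, at which $\mu(\lambda) = 1 - (-1)^k \in \{0,2\}$ and the correspondence gives no information. At each such energy $(X,\ell_0)$ carries compactly supported Dirichlet eigenfunctions determined by the cycles of $X$ (cf.\ the references cited before the corollary); these contribute the points $\{k^2\pi^2 : k \in \N\}$ to both $\spec[p]{\lapl 0}$ and $\spec[comp]{\lapl 0}$. Whether such a point also belongs to the a.c.\ spectrum is already settled by the band computation above (the a.c.\ spectrum being closed): for odd $k$ it lies in a spectral gap, whereas for even $k$ it is embedded in the interior of an a.c.\ band. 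Together with $\lapl 0 \ge 0$, which rules out spectrum below $0$, this establishes the two displayed spectra as point- and a.c.-components.

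It remains to upgrade $\spec[p]{\lapl 0}$ to $\spec[comp]{\lapl 0}$. Here I would combine the eigenspace isomorphism with the density, quoted before the statement, of compactly supported combinatorial eigenfunctions in $\ker(\lapl \comb - 3/2)$: transported to $(X,\ell_0)$ this shows that the eigenspace at every $\lambda = (2k+2/3)^2\pi^2$ is spanned by compactly supported functions, while the Dirichlet eigenfunctions at $\lambda = k^2\pi^2$ are compactly supported by construction. The main obstacle is the bookkeeping on the exceptional set $\{k^2\pi^2\}$, where the correspondence theorem is mute: one has to check that the a.c.\ band edges coming from $\mu^{-1}(3/2)$, the embedded combinatorial eigenvalue, and the families of topological Dirichlet eigenvalues are each placed in the correct spectral component, the delicate case being the even-$k$ Dirichlet eigenvalues, which are genuine eigenvalues embedded in the interior of the absolutely continuous spectrum.
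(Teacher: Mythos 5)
Your proposal is correct and follows essentially the same route as the paper, which derives the corollary directly from Proposition~\ref{prp:combspec} via the relation $\mu(\lambda)=1-\cos\sqrt\lambda$ of~\cite{cattaneo:97,bgp:08}, treating the exceptional energies $\lambda=k^2\pi^2$ separately through the topological Dirichlet eigenfunctions. You merely spell out the elementary computations (solving $\cos\sqrt\lambda=-1/2$ and pulling back the band $[0,3/2]$) and the placement of the exceptional points relative to the a.c.\ bands, which the paper leaves implicit.
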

Similarly, as in the discrete setting, we conclude the following
(dis)continuity properties of the IDS: 
\begin{proposition}
  \label{prp:kirchids} 
  Let $N_0$ be the (abstract) IDS of the $\Z^2$-periodic Kirchhoff
  Laplacian $\lapl 0$ on the metric graph $(X,\ell_0)$, given by
  \begin{equation*}
    N_0(\lambda)
    = \frac 1 {\vol({\mathcal F},\ell_0)} 
      \tr [\1_{\mc F} P_0((-\infty,\lambda]) ],
  \end{equation*}
  where $\tr$ is the trace on the Hilbert space $\Lsqr {X,\ell_0}$
  and $P_0$ denotes the spectral projection of $\lapl 0$. Then all the
  discontinuities of $\map {N_0} \R {[0,\infty)}$ are
  \begin{enumerate}
  \item at $\lambda = (2k + \frac 23)^2\pi^2$, $k \in \Z$, with jumps
    of size $\frac 16$,
  \item at $\lambda = k^2 \pi^2$, $k \in \N$, with jumps of size
    $\frac 12$.
  \end{enumerate}
  Moreover, $N_0$ is strictly monotone increasing on the absolutely
  continuous spectrum $\spec[ac]{\lapl 0}$ given
  in~\eqref{eq:kirch.ac} and $N_0$ is constant on the complement of
  $\spec {\lapl 0}$.
\end{proposition}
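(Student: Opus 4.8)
The plan is to reduce everything to the combinatorial picture of \Prps{combspec}{combids} together with the spectral correspondence recalled in \Cor{kirchspec}. First I would record the two soft facts, exactly as in the combinatorial case: $N_0$ is the distribution function of a density-of-states (spectral) measure, so its points of increase lie in $\spec{\lapl 0}$ and it can jump only on $\spec[p]{\lapl 0}$. Constancy of $N_0$ off $\spec{\lapl 0}$ is then immediate, and by \Cor{kirchspec} the only candidate jump points are $\lambda=(2k+\tfrac23)^2\pi^2$, $k\in\Z$, and $\lambda=k^2\pi^2$, $k\in\N$. It remains to compute the jump heights, and for this I would use, just as in \Prp{combids}, that the jump of $N_0$ at an eigenvalue $\lambda_0$ equals $\tfrac{1}{\vol(\mc F,\ell_0)}$ times the number, per fundamental domain, of linearly independent compactly supported eigenfunctions in $\ker(\lapl 0-\lambda_0)$ (the density of compactly supported eigenfunctions in the eigenspace being guaranteed, via the correspondence, by Kuchment's theorem as for $\lapl\comb$). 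A direct count gives $\vol(\mc F,\ell_0)=6$: each fundamental domain carries the $|Q|=3$ vertices $a,b,c$ of degree $4$, hence $3\cdot 4/2=6$ edges, each of length one.

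For $\lambda=(2k+\tfrac23)^2\pi^2$ one has $\mu(\lambda)=1-\cos\sqrt\lambda=\tfrac32$ and $\sin\sqrt\lambda\neq 0$, so no eigenfunction vanishes identically on $V$. The correspondence of \Cor{kirchspec} then identifies $\ker(\lapl 0-\lambda)$ with $\ker(\lapl\comb-\tfrac32)$ compatibly with the $\Z^2$-action, hence preserving the per-fundamental-domain dimension. By \Prp{combids} the combinatorial jump is $\tfrac13$, which normalised by $|Q|=3$ gives exactly one combinatorial eigenfunction per fundamental domain (one hexagon per cell); concretely each hexagon function $F_H$ of~\eqref{eq:FH} extends to a compactly supported metric eigenfunction living on the six edges of $H$. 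Thus the per-cell dimension of $\ker(\lapl 0-\lambda)$ equals $1$ and the jump is $1/6$, proving~(i).

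The delicate point is $\lambda=k^2\pi^2$, where $\sin\sqrt\lambda=0$ and $\mu(\lambda)=1-(-1)^k\in\{0,2\}$. Since $\{0,2\}\cap\spec[p]{\lapl\comb}=\emptyset$ by \Prp{combspec}, eigenfunctions with nonzero vertex values contribute no jump, and the whole jump comes from the \emph{Dirichlet} eigenfunctions, which vanish at every vertex and equal $c_e\sin(k\pi x)$ on each edge $e$. Writing out the Kirchhoff condition $\sum_{e\in E_v}\orul{\de f}_e(v)=0$ (using $\cos(k\pi)=(-1)^k$) turns it into a linear condition on $c=(c_e)_e$: for $k$ even it is the cycle condition, whose solution space is the cycle space, and for $k$ odd it is $\sum_{e\in E_v}c_e=0$ for all $v$, i.e.\ the kernel of the \emph{unsigned} incidence operator. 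The main work is to check that both per-cell kernel dimensions equal $|E(\mc F)|-|V(Q)|=6-3=3$. For $k$ even this is just the per-cell dimension of the cycle space. For $k$ odd I would invoke that the Kagome lattice is \emph{non-bipartite} (it contains triangles), so the unsigned incidence operator has full per-cell rank $3$ and its kernel again has per-cell dimension $6-3=3$ (equivalently, its $3\times 6$ Floquet symbol has rank $3$ for a.e.\ $\theta\in\Torus^2$). Hence in both parities the jump is $3/6=1/2$, proving~(ii). I expect this parity bookkeeping, and in particular the use of non-bipartiteness to make the odd case match the even one, to be the crux of the argument.

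Finally, strict monotonicity of $N_0$ on $\spec[ac]{\lapl 0}$ follows from Floquet theory. The dispersion relations $\mu_\pm(\theta)=\tfrac34\pm\tfrac14\sqrt{3+2\kappa(\theta)}$ of \Prp{combspec} are real-analytic and non-constant on $\Torus^2$, and away from the exceptional set $\{\lambda=k^2\pi^2\}$ the map $\lambda\mapsto\mu(\lambda)$ is a local diffeomorphism; transporting the positive combinatorial density of states through $\mu^{-1}$ shows that the density-of-states measure of $\lapl 0$ has strictly positive absolutely continuous part on the interior of each band listed in~\eqref{eq:kirch.ac}. Consequently $N_0$ increases strictly across every subinterval of $\spec[ac]{\lapl 0}$, while being constant on each spectral gap, which completes the proof.
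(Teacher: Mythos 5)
Your argument is correct and arrives at the same jump sizes, but it replaces the paper's key citation by an explicit computation, which is worth comparing. The paper, like you, starts from the facts that $N_0$ is the distribution function of a spectral measure (so jumps sit only on $\spec[p]{\lapl 0}$ and the points of increase fill out $\spec{\lapl 0}$), invokes Kuchment's theorem that compactly supported eigenfunctions exhaust each eigenspace, and converts the jump at $\lambda$ into the per-volume count $\lim_n \dim E_n(\lambda)/\vol(\Lambda_n,\ell_0)$ via the boundary-map estimate as in~\eqref{eq:DnD}; for $\lambda=(2k+\tfrac23)^2\pi^2$ both proofs then count one hexagon per fundamental domain against $\vol(\mc F,\ell_0)=6$. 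The genuine difference is at $\lambda=k^2\pi^2$: the paper simply quotes \cite{von-below:85,nicaise:85} or \cite[Lem.~5.1, Prop.~5.2]{lledo-post:08b} for $\dim E_n(\lambda)\approx|E(\Lambda_n)|-|V(\Lambda_n)|$, whereas you rederive this by writing the Dirichlet eigenfunctions as $c_e\sin(k\pi x)$, turning the Kirchhoff condition into the cycle condition for $k$ even and the unsigned incidence condition for $k$ odd, and using non-bipartiteness (equivalently, invertibility of the signless Laplacian symbol $D+A(\theta)=D\,(2-\lapl[\theta]\comb)$, whose eigenvalues $2-\mu$ stay positive) to get per-cell kernel dimension $6-3=3$ in both parities. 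That is precisely the content of the cited results, so your route is more self-contained at the cost of the parity bookkeeping, and it makes transparent why the two parities give the same answer. Two small repairs: the metric eigenfunction attached to a hexagon function $F_H$ is supported on all $18$ edges meeting $V(H)$, not only on the six edges of $H$ (immaterial for the count, since the support is still compact and one still gets one independent eigenfunction per cell); and your reduction to Dirichlet eigenfunctions at $\lambda=k^2\pi^2$ should not pass through the vertex-value correspondence, which is stated only for $\lambda\neq k^2\pi^2$ --- argue instead directly that continuity forces $\ul f_e(\bd_+e)=(-1)^k\,\ul f_e(\bd_-e)$ on every edge, so connectedness, the presence of triangles (for odd $k$) and square-integrability force the vertex values to vanish. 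Finally, your Floquet argument for strict monotonicity is more than is needed: it already follows from the fact that the points of increase of the distribution function of a spectral measure are exactly $\spec{\lapl 0}$.
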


\begin{proof}
  Our periodic situation fits into the general setting given
  in~\cite{LenzPV-07}, by choosing the trivial probability space
  $\Omega = \{ \omega \}$ with only one element.  Proposition~5.2
  in~\cite{LenzPV-07} states that $N_0$ is the distribution function
  of a spectral measure for the operator $\lapl 0$. Consequently,
  discontinuities of $N_0$ can only occur at the
  $\Lsqrsymb$-eigenvalues of $\lapl 0$, and the points of increase of
  $N_0$ coincide with the spectrum $\sigma(\lapl 0)$, which is given
  in \Cor{kirchspec}. Hence, it only remains to prove the statements
  about the discontinuities of $N_0$. We know from~\cite[Theorem
  11]{kuchment:05} that the compactly supported eigenfunctions densely
  exhaust every $\Lsqrsymb$-eigenspace of $\lapl 0$.

  Let $I_n \subset \Z^2$ be a F\o lner sequence. This time, we look at
  the corresponding topological graphs $\Lambda(I_n)$ and their
  thickened topological boundaries $\bd_r \Lambda(I_n) = \set{x \in
    X}{d(x,\bd \Lambda(I_n)) \le r}$, and denote them by $\Lambda_n$
  and $\bd_r \Lambda_n$, respectively. We are interested in the jumps
  \begin{equation*}
    D(\lambda)
    :=  N_0(\lambda) -  \lim_{\eps \to 0} N_0(\lambda-\eps)
    = \frac 1 {\vol(\Lambda_n,\ell_0)}
       \tr \bigr[ \1_{\Lambda_n} P_0(\{\lambda\}) \bigl],
  \end{equation*}
  where the right hand side is, again, independent of the choice of
  $n$.  Let $\Lambda_n'$ be the closure of $\Lambda_n \setminus \bd_1
  \Lambda_n$ and
  \begin{equation*}
    D_n(\lambda)
    := \frac 1 {\vol(\Lambda_n,\ell_0)} \dim E_n(\lambda),
  \end{equation*}
  with $E_n(\lambda) = \set{f \in \ker(\lapl 0 - \lambda)} {\supp f
    \subset \Lambda_n'}$. Arguments analogously to the proof
  of~\eqref{eq:DnD} yield
  \begin{equation}
    \label{eq:DnDkirch} 
    D(\lambda) = \lim_{n \to \infty} D_n(\lambda). 
  \end{equation}
  For the proof of~\eqref{eq:DnDkirch}, however, we have to define the
  boundary map
  \begin{equation*}
    \map {b_n} {S_n(\lambda)} 
         {\bigoplus_{v \in \bd \Lambda_n} (\C \oplus \C^{E_v})}
    \quad \text{by} \quad
    (b_n f)_v:= (f(v), \orul
    {D f}(v)).
  \end{equation*}

  Let $\lambda = (2k+2/3)^2 \pi^2$, $k \in \Z$. We follow the same
  arguments as in the proof of \Prp{combids}.  Again, $\dim
  E_n(\lambda)$ is equal to the number of hexagons contained in
  $\Lambda_n$ up to a boundary term and
  we have $\vol(\mc F,\ell_0) = 6$ (see \Fig{hex-graph}~(a)).
  Therefore, we derive that the corresponding jump is of size $1/6$.

  Let $\lambda = k^2 \pi^2$, $k \in \N$. We know
  from~\cite{von-below:85,nicaise:85} or from~\cite[Lem.~5.1 and
  Prop.~5.2]{lledo-post:08b} that the dimension of $E_n(\lambda)$ is
  (up to an error proportional to $|\bd \Lambda_n|$) approximately
  equal to
  \begin{equation*}
    |E(\Lambda_n)| - |V(\Lambda_n)|
    \approx \frac 12
    \vol(\Lambda_n,\ell_0). 
  \end{equation*}
  This implies that $N_0$ has a discontinuity at $\lambda = k^2 \pi^2$
  of size $1/2$.
\end{proof}

\begin{remark}
  \label{rem:ids.gen.per.gr}
  Note that \Prps{combids}{kirchids} hold also for general covering
  graphs $X \to X_0$ with amenable covering group $\Gamma$ and compact
  quotient $X_0 \cong X/\Gamma$, once we have information about the
  shape of the support of elementary eigenfunctions (i.e.,
  eigenfunctions, which generate the eigenspace by linear combinations
  and translations). In our Kagome lattice example the elementary
  eigenfunction is supported on a hexagon. For example, the jump of
  size $1/3$ at the eigenvalue $\mu=3/2$ in the discrete case is the
  number $\nu$ of hexagons determined by a combinatorial fundamental
  domain ($\nu=1$) divided by the number of vertices in a
  combinatorial fundamental domain ($\abs Q=3$).

  In the metric graph setting, the jump at $\lambda=(2k+2/3)^2\pi^2$
  is of size $1/6$ due to the fact that we have six edges in one
  topological fundamental domain.

  For the eigenvalues at $\lambda=k^2\pi^2$ (also called
  \emph{topological}, see~\cite{lledo-post:08b}) we even have a
  precise information for any $r$-regular amenable covering graph,
  namely
  \begin{equation*}
    \dim E_n(\lambda)
    \approx |E(\Lambda_n)| - |V(\Lambda_n)|
    \approx \Bigl(1-\frac 2 r \Bigr) \abs{E(\Lambda_n)}
    =\Bigl(1-\frac 2 r \Bigr) \vol(\Lambda_n,\ell_0), 
  \end{equation*}
  up to an error proportional to $\abs{\bd \Lambda_n}$, so that the
  jump of $N_0$ at $\lambda$ is $(1-2/r)$.
\end{remark}

\subsection{IDS of associated random length models}

Finally, we impose a random length structure $\map \ell {\Omega \times
  E} {[\ell_{\min},\ell_{\max}]}$ on the edges of $(X,\ell_0)$ with
independently distributed edge lengths, as described in \Ass{wegner}.
Then \Cor{wegner} tells us that the associated integrated density of
states $\map N \R {[0,\infty)}$ is continuous and even Lipschitz
continuous on $(0,\infty)$. Hence, all discontinuities occurring for
the IDS of the Kirchhoff Laplacian on the $\Z^2$-periodic graph
$(X,\ell_0)$ disappear by introducing this type of randomness.

\section{Proof of the approximation of the IDS via exhaustions}
\label{sec:approx}

In this section, we prove \Thm{ids}, namely, that the non-random
integrated density of states~\eqref{eq:idsn} can be approximated by
suitably chosen normalised eigenvalue counting functions, for
$\Prob$-almost all random parameters $\omega \in \Omega$.

For the following considerations, we need the quadratic forms
associated to the Schr\"odinger operators. Recall that for each
Lagrangian subspace $L_v \subset \C^{E_v} \oplus \C^{E_v}$ describing
the vertex condition at $v \in V$ there exists a unique orthogonal
projection $Q_v$ on $\C^{E_v}$ with range $\mc G_v := \ran Q_v$ and a
symmetric operator on $\mc G_v$ such that~\eqref{eq:qg.vx.cond} holds.

Let $\Lambda \subset X$ be a topological subgraph. The quadratic form
associated to the operator with vertex conditions given by $(\mc G_v,
R_v)$ at inner vertices $V(\Lambda) \setminus \bd \Lambda$ and
Dirichlet conditions at $\bd \Lambda$ is defined as
\begin{gather*}
  \dom \qf h^{\Lambda,\Dir}
  = \bigset{f \in \Sobx \max {X,\ell}}
    {\ul f(v) \in \mc G_v \; \forall v \in V(\Lambda) \setminus \bd \Lambda,
      \; \ul f(v) = 0 \; \forall v \in \bd \Lambda},\\
  \qf h^{\Lambda,\Dir}(f)
  = \normsqr[\Lsqr{\Lambda,\ell}] {Df} 
     + \iprod[\Lsqr{\Lambda,\ell}] {q f} f
     + \sum_{v \in V(\Lambda)} \iprod[\mc G_v]{R_v \ul f(v)}{\ul f(v)}.
\end{gather*}
In particular, if $\Lambda=X$ is the full graph, then there is no
boundary and $\qf h=\qf h^X$ is the quadratic form associated to the
operator $H=H_{(X,\ell),L}$.

If $\ell_{\min}:=\inf_e \ell(e)>0$, $C_\pot:=\norm[\infty] q < \infty$
and $\sup_v \norm{R_v}=: C_R < \infty$, then $\qf h^{\Lambda,\Dir}$ is
a closed quadratic form with corresponding self-adjoint operator
$H^{\Lambda,\Dir}$.
\begin{lemma}
  \label{lem:lower.bd}
  For any subgraph $\Lambda$ of $X$, the quadratic form $\qf
  h^{\Lambda,\Dir}$ is closed. Moreover, the associated self-adjoint
  operator $H^{\Lambda,\Dir}$ has domain given by
  \begin{multline*}
    \dom H^{\Lambda,\Dir}
    = \Bigset{f \in \Sobx[2] \max {X,\ell}}
        {\ul f(v) = 0 \; \forall v \in \bd V,\\
          \ul f(v) \in \mc G_v,\; Q_v \orul{Df}(v)=R_v \ul f(v) \;
          \forall v \in V(\Lambda) \setminus \bd \Lambda}.
  \end{multline*}
  Moreover, $H^{\Lambda,\Dir}$ is \emph{uniformly} bounded from below
  by $-C_0$ where $C_0 \ge 0$ depends only on $\ell_-$, $C_R$ and
  $C_\pot$, but not on $\Lambda$.
\end{lemma}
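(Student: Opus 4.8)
The plan is to treat $\qf h^{\Lambda,\Dir}$ as a form-bounded perturbation of the nonnegative Dirichlet energy $f\mapsto\normsqr[\Lsqr{\Lambda,\ell}]{Df}$ on the form domain $\cD:=\dom\qf h^{\Lambda,\Dir}$; since only the restriction of $f$ to $\Lambda$ enters the form, I regard $\cD$ as a subspace of $\Sobx\max{\Lambda,\ell}$. Everything rests on a sharpened edgewise trace inequality: for $g\in\Sob{I_e}$ on $I_e=[0,\ell(e)]$ with $\ell(e)\ge\ell_{\min}$ and any $\eps>0$,
\[
  \abssqr{g(0)}+\abssqr{g(\ell(e))}
  \le \eps\normsqr[\Lsqr{I_e}]{g'}+C_\eps\normsqr[\Lsqr{I_e}]{g},
\]
with $C_\eps$ depending only on $\eps$ and $\ell_{\min}$ (it follows from $\abssqr{g(0)}=\frac1\delta\int_0^\delta\abssqr{g}\,\dd t-\frac2\delta\int_0^\delta\int_0^t\Re(\bar g g')\,\dd s\,\dd t$ with $\delta=\ell_{\min}$, and Young's inequality). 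Summing over the edges of $\Lambda$ and regrouping by endpoints yields
\[
  \sum_{v\in V(\Lambda)}\abssqr{\ul f(v)}
  =\sum_{e\in E(\Lambda)}\bigl(\abssqr{f_e(0)}+\abssqr{f_e(\ell(e))}\bigr)
  \le \eps\normsqr[\Lsqr{\Lambda,\ell}]{Df}+C_\eps\normsqr[\Lsqr{\Lambda,\ell}]{f},
\]
where, crucially, the constants do not depend on the number of edges, hence not on $\Lambda$.

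For the uniform lower bound I would estimate the vertex term by $\bigl|\sum_{v}\iprod[\mc G_v]{R_v\ul f(v)}{\ul f(v)}\bigr|\le C_R\sum_v\abssqr{\ul f(v)}$ and the potential term by $C_\pot\normsqr[\Lsqr{\Lambda,\ell}]{f}$. Choosing $\eps=1/(2C_R)$ (the vertex term is absent when $C_R=0$) absorbs half of the Dirichlet energy and gives
\[
  \qf h^{\Lambda,\Dir}(f)
  \ge \tfrac12\normsqr[\Lsqr{\Lambda,\ell}]{Df}-C_0\normsqr[\Lsqr{\Lambda,\ell}]{f},
  \qquad C_0:=C_\pot+C_RC_\eps,
\]
with $C_0$ depending only on $\ell_{\min}$, $C_R$ and $C_\pot$ and not on $\Lambda$; this is the asserted uniform semiboundedness. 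The same inequality, used once more with a fixed $\eps$ (say $\eps=\ell_{\min}$) for the reverse estimate, shows that the form norm $\qf h^{\Lambda,\Dir}(\cdot)+(C_0+1)\normsqr[\Lsqr{\Lambda,\ell}]{\cdot}$ is equivalent to the $\Sobx\max{\Lambda,\ell}$-norm on $\cD$. Since the trace maps are bounded, the linear constraints $\ul f(v)\in\mc G_v$ (interior vertices) and $\ul f(v)=0$ (on $\bd\Lambda$) cut out a \emph{closed} subspace $\cD$ of the complete space $\Sobx\max{\Lambda,\ell}$, and $\cD$ contains the $\Lsqr{\Lambda,\ell}$-dense set of functions supported in the edge interiors; hence $\qf h^{\Lambda,\Dir}$ is densely defined, semibounded and closed.

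To identify $\dom H^{\Lambda,\Dir}$ I would use the variational characterisation: $f\in\dom H^{\Lambda,\Dir}$ with $H^{\Lambda,\Dir}f=u$ iff $f\in\cD$ and $\qf h^{\Lambda,\Dir}(f,g)=\iprod[\Lsqr{\Lambda,\ell}]{u}{g}$ for all $g\in\cD$. Testing first against $g$ smooth and supported in the interior of a single edge (so all vertex terms drop out) shows edge by edge that $-D^2f_e+qf_e=u_e$ weakly, whence $f_e\in\Sob[2]{I_e}$; thus $f\in\Sobx[2]\max{\Lambda,\ell}$ and $u=-D^2f+qf$. Testing then against arbitrary $g\in\cD$ and integrating by parts on each edge converts $\iprod[\Lsqr{\Lambda,\ell}]{Df}{Dg}$ into $\iprod[\Lsqr{\Lambda,\ell}]{-D^2f}{g}+\sum_v\iprod{\orul{Df}(v)}{\ul g(v)}$. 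At interior vertices $\ul g(v)$ runs through all of $\mc G_v$ (realised by bumps supported near $v$) while $\ul g(v)=0$ on $\bd\Lambda$, so requiring the identity to hold for every such $g$ forces, at each interior vertex, the vertex condition $Q_v\orul{Df}(v)=R_v\ul f(v)$ of~\eqref{eq:qg.vx.cond}; together with $\ul f(v)=0$ on $\bd\Lambda$ and $\ul f(v)\in\mc G_v$ inherited from $\cD$, this is precisely the claimed domain. The reverse inclusion is the same integration-by-parts identity read backwards.

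The main obstacle is the \emph{uniformity} of the lower bound in $\Lambda$. The plain boundedness of the evaluation maps only yields $\abssqr{\ul f(v)}\le\const\,\normsqr[\Sobx\max{\Lambda,\ell}]{f}$ with a coefficient on $\normsqr{Df}$ that cannot be made small, which is useless for absorbing the (possibly sign-indefinite) vertex term into the Dirichlet energy. It is exactly the refinement with the tunable $\eps\normsqr{Df}$ term, applied edgewise under the uniform length bound $\ell_{\min}$ and summed without cross-terms, that lets the vertex contribution be dominated with a constant depending only on $\ell_{\min}$, $C_R$, $C_\pot$ and never on $\card E(\Lambda)$.
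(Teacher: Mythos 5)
Your proposal is correct and is essentially the worked-out version of the paper's own argument: the paper simply cites Kuchment's Theorem~17 and Corollary~10 and remarks that everything rests on exactly the tunable trace estimate $\sum_{v}\abssqr{\ul f(v)}\le\eta\normsqr{Df}+C_\eta\normsqr{f}$ (with $C_\eta$ depending only on $\eta$, $C_R$, $\ell_{\min}$), which is precisely the inequality you prove edgewise and then use for both the uniform semiboundedness and the closedness. One cosmetic caveat: with the paper's literal conventions ($\orul{Df}$ the outward normal derivative and a $+$ sign on the vertex term of $\qf h^{\Lambda,\Dir}$), your integration by parts actually yields $Q_v\orul{Df}(v)=-R_v\ul f(v)$; this sign mismatch between form and stated domain is already present in the paper, not an error introduced by you.
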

\begin{proof}
  The first assertion follows from~\cite[Thm.~17]{kuchment:04}. The
  \emph{uniform} lower bound is a consequence
  of~\cite[Cor.~10]{kuchment:04} where the lower bound is given
  explicitly.  Basically, the statements follow from a standard
  Sobolev estimate of the type
  \begin{equation*}
    \Bigabs{\sum_v \iprod{R_v \ul f(v)}{\ul f(v)}}
    \le C_R \sum_{v \in V(\Lambda)} \abssqr {\ul f(v)}
    \le \eta \normsqr{Df} + C_\eta \normsqr f
  \end{equation*}
  for $\eta>0$, where $C_\eta$ depends only on $\eta$, $C_R$ and
  $\ell_{\min}$.
\end{proof}

The Dirichlet operator will serve as upper bound in the bracketing
inequality~\eqref{eq:brack.op} later on. In order to have a lower
bound we introduce a Neumann-type operator $H^\Lambda$ via its
quadratic form $\qf h^\Lambda$. Since the vertex conditions can be
negative, we have to use the boundary condition $(\C^{E_v},-C_R)$
instead of a simple Neumann boundary condition $(\C^{E_v},0)$.  The
quadratic form $\qf h^\Lambda$ is defined by
\begin{gather*}
  \dom \qf h^\Lambda
  = \bigset{f \in \Sobx \max {X,\ell}}
    {\ul f(v) \in \mc G_v \; 
                 \forall v \in V(\Lambda) \setminus \bd \Lambda},\\
  \qf h^\Lambda(f)
  = \normsqr[\Lsqr{\Lambda,\ell}] {Df} 
     + \iprod[\Lsqr{\Lambda,\ell}] {q f} f
     + \!\!\! \sum_{v \in V(\Lambda) \setminus \bd \Lambda} \!\!\!
               \iprod[\mc G_v]{R_v \ul f(v)}{\ul f(v)}
     - C_R \sum_{v \in \bd \Lambda} \abssqr[\mc G_v]{\ul f(v)}.
\end{gather*}
Note that the boundary condition $\wt R_v=-C_R$ trivially fulfills the
norm bound $\norm{\wt R_v} \le C_R$, and therefore by \Lem{lower.bd},
the form $\qf h^\Lambda$ is uniformly bounded from below by the same
constant $-C_0$ as $\qf h^{\Lambda,\Dir}$. By adding $C_0$ to the
(edge) potential $q$ we may assume that w.l.o.g.~$H^X$,
$H^{\Lambda,\Dir}$ and $H^\Lambda$ are all non-negative for all
subgraphs $\Lambda$.

We can now show the following bracketing result:
\begin{lemma}
  \label{lem:brack.op}
  Let $\Lambda$ be a topological subgraph of $X$ and $\Lambda'$ be the
  closure of the complement $\compl \Lambda$. Then
  \begin{equation}
    \label{eq:brack.op}
    H^{\Lambda,\Dir} \oplus H^{\Lambda',\Dir}
    \ge H
    \ge H^\Lambda \oplus H^{\Lambda'}
    \ge 0
  \end{equation}
  in the sense of quadratic forms.
\end{lemma}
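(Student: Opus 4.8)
The plan is to prove the three inequalities in~\eqref{eq:brack.op} by comparing the quadratic forms directly, using the variational (form) characterization of the operator ordering. Recall that for self-adjoint operators bounded below, $A \ge B$ in the sense of quadratic forms means $\dom \qf a \subseteq \dom \qf b$ and $\qf a(f) \ge \qf b(f)$ for all $f \in \dom \qf a$. The key observation is that all three forms $\qf h$, $\qf h^{\Lambda,\Dir} \oplus \qf h^{\Lambda',\Dir}$ and $\qf h^\Lambda \oplus \qf h^{\Lambda'}$ carry the \emph{same} bulk term $\normsqr[\Lsqr{X,\ell}]{Df} + \iprod{qf}{f}$ (after splitting the integration over $\Lambda$ and $\Lambda'$), so the comparison reduces entirely to bookkeeping of the \emph{boundary vertices} $\bd \Lambda = \bd \Lambda'$ and the relative sizes of the form domains.

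First I would verify the rightmost inequality $H^\Lambda \oplus H^{\Lambda'} \ge 0$. This is immediate from the normalization remark preceding the lemma: after adding $C_0$ to the potential we arranged that $H^\Lambda$ and $H^{\Lambda'}$ are individually non-negative, and the direct sum of non-negative operators is non-negative. Next, for the upper bracketing $H^{\Lambda,\Dir} \oplus H^{\Lambda',\Dir} \ge H$, I would compare form domains: a function in $\dom(\qf h^{\Lambda,\Dir} \oplus \qf h^{\Lambda',\Dir})$ vanishes at every vertex of $\bd \Lambda$ (the Dirichlet condition), hence it automatically lies in $\dom \qf h$ and can be regarded as a single function on $X$ that is continuous across the cut because both one-sided values are zero. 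On this smaller domain the two forms agree term-by-term: the bulk terms match after reassembling the integral over $\Lambda \dcup \Lambda' = X$, and the vertex terms at $\bd \Lambda$ contribute nothing since $\ul f(v)=0$ there. Thus $\qf h^{\Lambda,\Dir}(f) + \qf h^{\Lambda',\Dir}(f) = \qf h(f)$ on the smaller domain, giving the inequality (a form defined on a smaller domain, agreeing with $\qf h$ there, dominates $H$).

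For the lower bracketing $H \ge H^\Lambda \oplus H^{\Lambda'}$, the inclusion of domains runs the other way: $\dom \qf h \subseteq \dom(\qf h^\Lambda \oplus \qf h^{\Lambda'})$, because a function in $\dom \qf h$ satisfying the global vertex conditions, when restricted to $\Lambda$ and $\Lambda'$, certainly satisfies the weaker Neumann-type conditions (the domain of $\qf h^\Lambda$ imposes $\ul f(v) \in \mc G_v$ only at \emph{interior} vertices and nothing at $\bd \Lambda$). Here the forms no longer agree exactly: the global form $\qf h$ has a vertex term $\iprod[\mc G_v]{R_v \ul f(v)}{\ul f(v)}$ at each $v \in \bd\Lambda$ (since such a vertex is interior to $X$), whereas the split form replaces this by $-C_R \abssqr[\mc G_v]{\ul f(v)}$ on \emph{each} side, i.e.\ by $-2C_R\abssqr[\mc G_v]{\ul f(v)}$ in total. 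The inequality $\qf h(f) \ge (\qf h^\Lambda \oplus \qf h^{\Lambda'})(f)$ then follows from the bound $\iprod{R_v \ul f(v)}{\ul f(v)} \ge -C_R \abssqr{\ul f(v)} \ge -2C_R\abssqr{\ul f(v)}$, which uses $\norm{R_v} \le C_R$ together with the fact that $\ul f(v)$ is common to both sides of the cut (the trace of $f$ from $\Lambda$ and from $\Lambda'$ coincide, as $f$ is a genuine function on $X$).

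I expect the main obstacle to be the careful treatment of the boundary vertices at the interface, in particular keeping straight that a vertex $v \in \bd\Lambda$ is an \emph{interior} vertex of the full graph $X$ but a \emph{boundary} vertex for both $\Lambda$ and $\Lambda'$, so that its single vertex term in $\qf h$ is compared against a boundary term appearing \emph{twice} in the split form. The factor-of-two in the Neumann-type penalty $-C_R$ is exactly what guarantees the one-sided estimate goes through without further assumptions; verifying that the common trace value $\ul f(v)$ is shared consistently between the $\Lambda$- and $\Lambda'$-restrictions (so that the symplectic/continuity data match up) is the one point requiring genuine care rather than routine calculation.
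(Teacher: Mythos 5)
Your argument is correct and follows essentially the same route as the paper's proof: the domain inclusions $\dom \qf h^{\Lambda,\Dir} \oplus \dom \qf h^{\Lambda',\Dir} \subset \dom \qf h \subset \dom \qf h^{\Lambda} \oplus \dom \qf h^{\Lambda'}$, equality of the forms on the decoupled Dirichlet domain (where all interface terms vanish), and the bound $R_v \ge -C_R$ at the cut. One correction to your bookkeeping at a boundary vertex $v \in \bd\Lambda$: the evaluation $\ul f(v) \in \C^{E_v}$ is not a single shared trace value but a vector indexed by the adjacent edges, which are partitioned between $\Lambda$ and $\Lambda'$, so $\abssqr{\ul f(v)} = \abssqr{\ul f_\Lambda(v)} + \abssqr{\ul f_{\Lambda'}(v)}$ and the two Neumann-type penalties add up to exactly $-C_R\abssqr{\ul f(v)}$ rather than $-2C_R\abssqr{\ul f(v)}$; your chain of inequalities happens to pass through the correct estimate $\iprod{R_v \ul f(v)}{\ul f(v)} \ge -C_R\abssqr{\ul f(v)}$ as an intermediate step, so the conclusion is unaffected.
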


\begin{proof}
  It is clear from the inclusions $\{ 0 \} \subset \mc G_v \subset
  \C^{E_v}$ for all boundary vertices $v \in \bd \Lambda$ that the
  quadratic form domains fulfil
  \begin{equation*}
    \dom \qf h^{\Lambda,\Dir} \oplus \dom \qf h^{\Lambda',\Dir}
    \subset
    \dom \qf h
    \subset
    \dom \qf h^\Lambda \oplus \dom \qf h^{\Lambda'}.
  \end{equation*}
  Moreover, if $f=f_\Lambda \oplus f_{\Lambda'}$ is in the decoupled
  Dirichlet domain, then
  \begin{equation*}
    \qf h^{\Lambda,\Dir}(f_\Lambda)
     + \qf h^{\Lambda',\Dir}(f_{\Lambda'})
    = \qf h(f)
  \end{equation*}
  since $\ul f(v)=0$ on boundary vertices, if $f \in \dom \qf h$, then
  \begin{equation*}
    \qf h(f) \ge 
    \qf h^\Lambda(f_\Lambda)
    + \qf h^{\Lambda'}(f_{\Lambda'})
  \end{equation*}
  since $R_v \ge -C_R$. In particular, we have shown the inequality
  for the quadratic forms.
\end{proof}

Next, we provide a useful lemma about the spectral shift function of
two operators. For a non-negative operator $H$ with purely discrete
spectrum $\set{\lambda_k(H)}{k \ge 0}$ (repeated according to
multiplicity), the eigenvalue counting function is given by
\begin{equation*}
  n(H,\lambda) :=
  \tr \1_{[0,\lambda)}(H) =
  \bigabs{\set{k \ge 0} {\lambda_k(H) \le \lambda}}.
\end{equation*}
The \emph{spectral shift function (SSF)} of two non-negative operators
$H_1,H_2$ with purely discrete spectrum is then defined as
\begin{equation*}
  \xi(H_1,H_2,\lambda) := n(H_2,\lambda) - n(H_1,\lambda).
\end{equation*}
We have the following estimate:

\begin{lemma}
  \label{lem:ssf}
  Let $(X,\Omega,\Prob,\ell)$ be a random length metric graph (as
  described in \Subsec{ranlengthmod}) and $\Lambda \subset X$ be a
  compact topological subgraph. Let $L_1, L_2$ be two vertex
  conditions differing in the vertex set $V_\diff \subset V(\Lambda)$
  only, and such that the operators $\lapl {(\Lambda,
    \ell_\omega),L_i}$ are non-negative. Let $0 \le q$ be a bounded
  measurable potential and $H_i = \lapl {(\Lambda,\ell_\omega),L_i} +
  q$. Then we have
  \begin{equation}
    \label{eq:ssfineq} 
    | \xi (H_1,H_2,\lambda) |
    \le 2 \sum_{v \in V_\diff} \deg v. 
  \end{equation}
  Moreover, if $\map \rho {\R_+} \R$ is a monotone
  function with $\rho' \in \Lp [1] {\R_+}$, then
  \begin{equation}
    \label{eq:ssftrineq} 
    \bigabs{\tr [\rho(H_1) - \rho(H_2)]}
    \le 2 | \rho(\infty) - \rho(0) | \sum_{v \in V_\diff} \deg v,
  \end{equation}
  where the trace is taken in the Hilbert space $\Lsqr
  {\Lambda,\ell_\omega}$.
\end{lemma}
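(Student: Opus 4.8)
The plan is to reduce both inequalities to a rank bound on the difference of the vertex conditions, since $L_1$ and $L_2$ differ only on the vertices $V_\diff$. The key observation is that changing the Lagrangian subspace $L(v)$ at a single vertex $v$ is a perturbation that lives on the finite-dimensional boundary data space $\C^{E_v} \oplus \C^{E_v}$, which has dimension $2 \deg v$. More precisely, I would compare $H_1$ and $H_2$ at the level of their quadratic forms (or their resolvents) and show that the relevant perturbation has rank at most $2 \sum_{v \in V_\diff} \deg v$. The cleanest route is to insert an intermediate operator, namely the Laplacian with \emph{Dirichlet} conditions imposed at \emph{all} vertices in $V_\diff$ (decoupling those vertices entirely): both $H_1$ and $H_2$ are obtained from this common Dirichlet operator by a self-adjoint extension supported on $\bigoplus_{v \in V_\diff}(\C^{E_v}\oplus\C^{E_v})$, and such an extension changes the eigenvalue counting function by at most the dimension of that space.

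For the SSF bound~\eqref{eq:ssfineq}, I would invoke the standard interlacing / rank-perturbation principle: if $H_1$ and $H_2$ are two self-adjoint operators that are restrictions, or finite-rank-resolvent perturbations, of a common operator with a defect of total dimension $d$, then $|n(H_2,\lambda) - n(H_1,\lambda)| \le d$ for every $\lambda$. Here $d = 2\sum_{v \in V_\diff}\deg v$ accounts for imposing (or releasing) Dirichlet data on both the function values and the derivative values at each affected vertex. This is exactly the type of finite-rank perturbation bound referenced in the introduction (as used in~\cite{kirsch-veselic:02b}) and is the analogue, in the quantum-graph setting, of the ``not feeling the boundary'' step. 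I expect this to be the main obstacle: one must carefully account for the factor $2\deg v$ — the factor $2$ coming from constraining both $\ul f(v)$ and $\orul{Df}(v)$, and the $\deg v$ from the number of edge-ends meeting at $v$ — and verify that passing through the common Dirichlet operator really does cost at most this rank in \emph{both} directions, independently of $\lambda$.

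For the trace bound~\eqref{eq:ssftrineq}, I would use the classical representation of the trace of a difference in terms of the SSF: writing $\rho(H_i)$ via $\rho(t) = \rho(0) + \int_0^t \rho'(s)\,ds$ and using $n(H_i,\cdot)$, one obtains the formula
\begin{equation*}
  \tr[\rho(H_1) - \rho(H_2)]
  = -\int_0^\infty \rho'(\lambda)\,\xi(H_1,H_2,\lambda)\,d\lambda,
\end{equation*}
valid because $\rho' \in \Lp[1]{\R_+}$ and the SSF is uniformly bounded by~\eqref{eq:ssfineq}. Taking absolute values, pulling the uniform bound on $|\xi|$ out of the integral, and using $\int_0^\infty |\rho'(\lambda)|\,d\lambda = |\rho(\infty) - \rho(0)|$ (here monotonicity of $\rho$ makes $\rho'$ of constant sign, so the integral of $|\rho'|$ equals the total variation $|\rho(\infty)-\rho(0)|$) yields exactly~\eqref{eq:ssftrineq}. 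The only care needed is to justify that $\rho(H_1) - \rho(H_2)$ is trace class and that the integration-by-parts representation is legitimate, both of which follow from the discreteness of the spectra together with the $\Lp[1]{\R_+}$ decay of $\rho'$ and the uniform SSF bound just established.
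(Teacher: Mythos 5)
Your proposal is correct and follows essentially the same route as the paper: there one sets $\mathcal D_0 = \dom H_1 \cap \dom H_2$, notes that its codimension in each $\dom H_i$ is at most twice the number of edge-ends at the vertices of $V_\diff$, i.e.\ $2\sum_{v\in V_\diff}\deg v$, invokes the finite-rank counting-function bound of \cite[Lemma~9]{glv:07} for~\eqref{eq:ssfineq}, and deduces~\eqref{eq:ssftrineq} from Krein's trace identity exactly as you do. The only cosmetic difference is that the paper compares the two domains directly rather than passing through an intermediate decoupled Dirichlet operator, which sidesteps the factor-of-two bookkeeping you yourself flag as the delicate point.
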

\begin{proof} 
  Let ${\mathcal D}_0 = \dom H_1 \cap \dom H_2$. Then ${\mathcal D}_0$
  has finite index in $\dom H_i$, bounded above by twice the number of
  all edges adjacent to vertices $v \in V_\diff$. This implies $\dim
  (\dom H_i / \mc D_0) \le 2 \sum_{v \in V_\diff} \deg v$.
  Inequality~\eqref{eq:ssfineq} follows now from~\cite[Lemma
  9]{glv:07}. The second inequality~\eqref{eq:ssftrineq} follows
  readily from Krein's trace identity
  \begin{equation}
    | \tr \rho(H_1) - \rho(H_2) |
    \le \int_0^\infty \vert \rho'(\lambda) \vert
        \cdot | \xi(H_1,H_2,\lambda) | \dd \lambda.
  \end{equation}
\end{proof}

The following uniform resolvent boundedness holds in every random
length covering model:

\begin{lemma} \label{lem:trest} Let $(X,\Omega,\Prob,\ell,L,q)$ be a
  random length covering model with covering group $\Gamma$, as
  described in \Ass{covgraph}, and $\lambda > 0$. Then there is a
  constant $C_\lambda > 0$ such that we have
  \begin{equation*}
    \tr (H_\omega^\Lambda + \lambda)^{-1}
    \le C_\lambda\, \vol(\Lambda,\ell_0)
  \end{equation*}
  for all compact subgraphs $\Lambda \subset (X,\ell)$ and all $\omega
  \in \Omega$.
\end{lemma}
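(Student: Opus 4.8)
The strategy is to bound $H_\omega^\Lambda$ from below by a completely decoupled operator and then estimate the resolvent trace edge by edge. Since $0 \le B \le H$ implies $(H+\lambda)^{-1} \le (B+\lambda)^{-1}$, a \emph{lower} bound on $H_\omega^\Lambda$ yields an \emph{upper} bound on $\tr (H_\omega^\Lambda + \lambda)^{-1}$.

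First I would iterate the Neumann-type bracketing of \Lem{brack.op}, cutting the subgraph $\Lambda$ at every interior vertex. At each vertex $v$ the coupling constraint $\ul f(v) \in \mc G_v$ is dropped and the vertex term $\iprod[\mc G_v]{R_v \ul f(v)}{\ul f(v)}$ is replaced by $-C_R \abssqr{\ul f(v)}$; since $R_v \ge -C_R$ this can only decrease the form while enlarging its domain. After adding the global shift $C_0$ making all operators non-negative (as in \Lem{lower.bd}), this produces
\[
H_\omega^\Lambda \ge \bigoplus_{e \in E(\Lambda)} H_e \ge 0,
\]
where $H_e$ is the single-edge operator on $I_e = [0,\ell_\omega(e)]$ given by $-d^2/dx^2 + q + C_0$ with the Neumann-type condition $-C_R$ at both endpoints. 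Hence
\[
\tr (H_\omega^\Lambda + \lambda)^{-1}
\le \sum_{e \in E(\Lambda)} \tr (H_e + \lambda)^{-1},
\]
and it remains to bound each summand uniformly in $e$ and $\omega$.

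For the single-edge estimate I would use the Sobolev trace inequality from the proof of \Lem{lower.bd}: for any $\eta > 0$ there is $C_\eta$ (depending only on $\eta$, $C_R$, $\ell_{\min}$) with $\abssqr{f(0)} + \abssqr{f(\ell_\omega(e))} \le \eta \normsqr{f'} + C_\eta \normsqr{f}$. Choosing $\eta$ so small that $C_R \eta \le 1/2$ gives, on the form domain $\Sob[1]{I_e}$,
\[
\qf h_e(f) \ge \tfrac12 \normsqr[\Lsqr{I_e}]{f'} - C'' \normsqr[\Lsqr{I_e}]{f},
\]
with $C''$ depending only on $C_\pot$, $C_R$, $\ell_{\min}$ and $C_0$. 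The comparison form on the right is the shifted Neumann Laplacian on an interval, whose $k$-th eigenvalue is $\tfrac12 (k\pi/\ell_\omega(e))^2 - C''$. By the min-max principle and $\ell_\omega(e) \le \ell_{\max}$,
\[
\lambda_k(H_e) \ge \tfrac12 \Bigl(\frac{k\pi}{\ell_{\max}}\Bigr)^2 - C'',
\qquad k = 0,1,2,\dots.
\]
Together with $\lambda_k(H_e) \ge 0$ this yields a uniform bound: the finitely many indices $k$ with $\tfrac12(k\pi/\ell_{\max})^2 \le C''$ contribute at most $\lambda^{-1}$ each, while the remaining tail is dominated by the convergent series $\sum_k \bigl(\tfrac12(k\pi/\ell_{\max})^2 - C'' + \lambda\bigr)^{-1}$. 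Thus $\tr(H_e + \lambda)^{-1} \le C_\lambda$ with $C_\lambda$ depending only on $\lambda$, $\ell_{\min}$, $\ell_{\max}$, $C_R$, $C_\pot$. Summing over edges and using $\vol(\Lambda,\ell_0) = \abs{E(\Lambda)}$ (because $\ell_0 = \1_E$) gives the claim.

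The main obstacle is the uniformity of the single-edge bound across all $\omega$: controlling the negative boundary term $-C_R \abssqr{\ul f(v)}$ without letting the low eigenvalues of $H_e$ drift downward as $\ell_\omega(e)$ ranges over $[\ell_{\min},\ell_{\max}]$. This is exactly where the two-sided length bound enters, via $\ell_{\min}$ in the Sobolev trace constant $C_\eta$ and via $\ell_{\max}$ in the lower bound on the eigenvalue asymptotics.
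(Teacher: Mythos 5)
Your proof is correct, but it takes a genuinely different route from the paper's. The paper decouples in the \emph{opposite} direction: it imposes Dirichlet conditions at \emph{all} vertices of $\Lambda$, so that $H_\omega^{\Lambda,0}=\bigoplus_{e\in E(\Lambda)}H_\omega^{e,\Dir}$, and since this raises the operator (the wrong direction for an upper bound on the resolvent trace of $H_\omega^\Lambda$), it controls the two-sided error $\bigl|\tr(H_\omega^{\Lambda,0}+\lambda)^{-1}-\tr(H_\omega^{\Lambda}+\lambda)^{-1}\bigr|\le \tfrac4\lambda\,|E(\Lambda)|$ by the spectral shift function bound of \Lem{ssf} (Krein's trace identity applied to $\rho=-f_\lambda$, the vertex set where the conditions differ being all of $V(\Lambda)$, so $2\sum_v\deg v=4|E(\Lambda)|$); the single-edge Dirichlet traces are then trivially uniformly bounded because the Dirichlet eigenvalues of an interval of length at most $\ell_{\max}$ are explicit and non-negative. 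Your Robin (Neumann-type) decoupling from \emph{below} avoids the SSF machinery entirely: operator monotonicity of the inverse immediately gives $\tr(H_\omega^\Lambda+\lambda)^{-1}\le\sum_e\tr(H_e+\lambda)^{-1}$, at the price of having to show that the negative boundary terms $-C_R$ of the single-edge Robin operators do not push eigenvalues down non-uniformly in $\omega$ --- which you handle correctly with the Sobolev trace inequality and min--max against the Neumann Laplacian. Both arguments produce a constant depending only on $\lambda$, $\ell_{\min}$, $\ell_{\max}$, $C_R$ and $C_\pot$; yours is self-contained at the level of quadratic forms, while the paper's is shorter because \Lem{ssf} is already in place (it is needed elsewhere in \Sec{approx}) and the Dirichlet pieces have explicit, manifestly non-negative spectrum.
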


\begin{proof}
  Let $H_\omega^{\Lambda,0}$ denote the restriction on $\Lambda$ with
  Dirichlet vertex conditions \emph{at all vertices}. Then
  $H_\omega^{\Lambda,0} = \bigoplus_{e \in E(\Lambda)}
  H_\omega^{e,\Dir}$, where we identify the edge $e$ with the
  topological subgraph consisting of this edge and its end vertices in
  $X$. From~\eqref{eq:ssfineq} of \Lem{ssf} we conclude that
  \begin{equation*}
    | \tr \, (H_\omega^{\Lambda,0}+\lambda)^{-1} -
    (H_\omega^\Lambda+\lambda)^{-1} | \le \frac{4}{\lambda}\,
    |E(\Lambda)| = \frac{4}{\lambda} \vol(\Lambda,\ell_0).
  \end{equation*}
  Since $(H_\omega^{e,\Dir} +\lambda)^{-1}$ is bounded from above by
  $(\lapl [e,\Dir] {\omega} + \lambda)^{-1}$, and since the edges are
  uniformly bounded from above by $\ell_{\max}$, there is a constant
  $c_\lambda > 0$ such that $\tr\, (H_\omega^{e,\Dir} + \lambda)^{-1}
  \le c_\lambda$ for all $e \in E(\Lambda)$ and $\omega \in \Omega$.
  This implies the desired estimate with constant
  $C_\lambda=4\lambda^{-1}+ c_\lambda$.
\end{proof}

The proof of \Thm{ids} will now be given in four lemmata. All of these
lemmata are based on a given random length covering model
$(X,\Omega,\Prob,\ell,L,q)$ with an \emph{amenable} covering group
$\Gamma$ and a fixed \emph{tempered} F\o lner sequence $I_n$ with
associated compact topological graphs $\Lambda_n := \Lambda(I_n)$.

In the first lemma, we prove the convergence~\eqref{eq:idsmeasconv}
for a special family of functions $f_\lambda$ associated to resolvents
of the operators. Here, we need to apply an ergodic theorem of
Lindenstrauss~\cite{lindenstrauss:01}.

In later lemmata we show that the convergence \eqref{eq:idsmeasconv}
carries over to the uniform closure of finite linear combinations of
the functions $f_\lambda$, identify this closure with the help of the
Stone-Weierstrass Theorem, and finally conclude the desired
convergence for characteristic functions $\1_{[0,\lambda]}$ at
continuity points $\lambda > 0$ of the IDS.

\begin{lemma} \label{lem:Konvergenz-Resolventen} 
  Let $\lambda>0$ and $\map {f_\lambda} {[0 , \infty)} {\R}$, $f_\lambda (x) =
  \frac{1}{x+\lambda}$. Then there exists a subset $\Omega_0 \subset \Omega$
  of full $\Prob$-measure such that
  \begin{equation*}
    \lim_{n \to \infty} \frac 1 {\vol(\Lambda_n,\ell_\omega)}
      \tr [f_\lambda(H_\omega^{n,\Dir})]
    = \frac 1 {\Exp{\vol (\mc F,\ell_\bullet)}}
               \Exp{\tr [\1_{\mathcal F} f_\lambda(H_\bullet)]}
  \end{equation*}
  for all $\omega \in \Omega_0$.
\end{lemma}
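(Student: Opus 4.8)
The plan is to rewrite the normalised Dirichlet trace as a spatial ergodic average over the $\Gamma$-translates of the fundamental domain $\mc F$ and then invoke the pointwise ergodic theorem of Lindenstrauss~\cite{lindenstrauss:01}. Writing $\tr[f_\lambda(H_\omega^{n,\Dir})]=\tr[\1_{\Lambda_n} f_\lambda(H_\omega^{n,\Dir})]$, I would compare it with the \emph{bulk} quantity $\tr[\1_{\Lambda_n} f_\lambda(H_\omega)]$ built from the infinite-volume operator $H_\omega$ on $\Lsqr{X,\ell_\omega}$, splitting the argument into a boundary correction and a bulk average.

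The boundary correction is the step I expect to be the main obstacle, precisely because $H_\omega$ has no discrete spectrum and \Lem{ssf} cannot be applied to it directly. To circumvent this I would use the bracketing~\eqref{eq:brack.op} of \Lem{brack.op} with $\Lambda=\Lambda_n$: since $f_\lambda(x)=(x+\lambda)^{-1}$ is operator monotone decreasing, applying $f_\lambda$ and compressing with $\1_{\Lambda_n}$ (using that the decoupled operators are block diagonal) gives
\begin{equation*}
  \tr[f_\lambda(H_\omega^{\Lambda_n,\Dir})]
  \le \tr[\1_{\Lambda_n} f_\lambda(H_\omega)]
  \le \tr[f_\lambda(H_\omega^{\Lambda_n})].
\end{equation*}
The two bounding operators both live on the compact subgraph $\Lambda_n$ and differ only in their vertex conditions on $\bd\Lambda_n$, so \Lem{ssf} (with $\rho=f_\lambda$, $\abs{f_\lambda(\infty)-f_\lambda(0)}=1/\lambda$) bounds their trace difference by $\tfrac2\lambda\sum_{v\in\bd\Lambda_n}\deg v\le \tfrac{2d_{\max}}\lambda\abs{\bd\Lambda_n}$. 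Consequently $\tr[f_\lambda(H_\omega^{n,\Dir})]$ and $\tr[\1_{\Lambda_n} f_\lambda(H_\omega)]$ differ by $O(\abs{\bd\Lambda_n})$; dividing by $\vol(\Lambda_n,\ell_\omega)\ge\ell_{\min}\vol(\Lambda_n,\ell_0)$ and using the van Hove property~\eqref{eq:isop0}, this contribution vanishes as $n\to\infty$, uniformly in $\omega$.

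For the bulk term I would use covariance. As $\mc F$ is a fundamental domain, $\1_{\Lambda_n}=\sum_{\gamma\in I_n}\1_{\gamma\mc F}$, and since $U_{(\omega,\gamma)}$ implements the geometric action of $\gamma$ one has $U_{(\omega,\gamma)}\1_{\mc F}U_{(\omega,\gamma)}^*=\1_{\gamma\mc F}$; combined with~\eqref{eq:opcons} and unitary invariance of the trace this yields $\tr[\1_{\gamma\mc F}f_\lambda(H_\omega)]=F(\gamma^{-1}\omega)$ with $F(\omega):=\tr[\1_{\mc F}f_\lambda(H_\omega)]$, and likewise $\vol(\gamma\mc F,\ell_\omega)=g(\gamma^{-1}\omega)$ with $g(\omega):=\vol(\mc F,\ell_\omega)$ by the metric consistency~\eqref{eq:ell.cons}. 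Both $F$ and $g$ are bounded: $F(\omega)\le C_\lambda\vol(\mc F,\ell_0)$ by \Lems{brack.op}{trest} applied to $\mc F$, and $g(\omega)\le\ell_{\max}\abs{E(\mc F)}$, so both belong to $\Lp[1]\Omega$. Applying the ergodic theorem of~\cite{lindenstrauss:01} along the tempered F\o lner sequence $I_n$ to $F$ and to $g$ produces a full-measure set $\Omega_0$ on which
\begin{equation*}
  \frac1{\abs{I_n}}\sum_{\gamma\in I_n}F(\gamma^{-1}\omega)\to\Exp F
  \Und
  \frac1{\abs{I_n}}\sum_{\gamma\in I_n}g(\gamma^{-1}\omega)\to\Exp{\vol(\mc F,\ell_\bullet)}.
\end{equation*}
Since $g\ge\ell_{\min}\abs{E(\mc F)}>0$ the limit of the denominator is strictly positive, so on $\Omega_0$ the ratio converges to $\Exp F/\Exp{\vol(\mc F,\ell_\bullet)}$. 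Together with the vanishing of the boundary correction this gives the claimed limit for every $\omega\in\Omega_0$.
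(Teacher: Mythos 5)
Your proposal is correct and follows essentially the same route as the paper: bracketing via \Lem{brack.op} together with operator monotonicity of the resolvent to sandwich the compressed quantity $\tr[\1_{\Lambda_n} f_\lambda(H_\omega)]$ between the Dirichlet and Neumann-type traces, the spectral-shift bound of \Lem{ssf} plus the van Hove property to kill the boundary term, and covariance plus \Lem{trest} to reduce the bulk term to a Lindenstrauss ergodic average applied separately to numerator and denominator. The only cosmetic difference is that the paper phrases the middle term as the compression $p_{\Lambda_n}(H_\omega+\lambda)^{-1}i_{\Lambda_n}$, which coincides with your $\1_{\Lambda_n}f_\lambda(H_\omega)$ under the trace.
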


\begin{proof} 
  We first consider a fixed $\omega \in \Omega$ and a fixed $\Lambda =
  \Lambda(I_n)$ and suppress the parameters $\omega$ and $n$ in the
  notation. Recall the definitions of $H^{\Lambda,\Dir}$ and
  $H^\Lambda$ with quadratic form domains given below. Let $\Lambda'$
  denote the closure of the complement $\Lambda^c$ in the metric graph
  $(X,\ell)$. By \Lem{brack.op} we have~\eqref{eq:brack.op} in the
  sense of quadratic forms. Since taking inverses is operator
  monotone, this implies
  \begin{equation*}
    (H^{\Lambda,\Dir} \oplus H^{\Lambda',\Dir} + \lambda)^{-1} 
    \le (H + \lambda)^{-1}
    \le (H^\Lambda \oplus H^{\Lambda'} + \lambda)^{-1}
  \end{equation*}
  for all $\lambda >0$.  In particular, we obtain inequalities for the
  following restricted quadratic forms: Set $(H +
  \lambda)^{-1}_\Lambda = p_\Lambda (H + \lambda)^{-1} i_\Lambda$,
  where $i_\Lambda$ and $p_\Lambda$ denote the canonical inclusions
  and projections between $\Lsqr {\Lambda,\ell} $ and $\Lsqr
  {X,\ell}$. Then
  \begin{equation}
    \label{eq:mon}
    (H^{\Lambda,\Dir} + \lambda)^{-1}
    \le (H + \lambda)_\Lambda^{-1}
    \le (H^\Lambda + \lambda)^{-1}.
  \end{equation}
  Consequently, $(H + \lambda)_\Lambda^{-1} - (H^{\Lambda,\Dir} +
  \lambda)^{-1}$ is non-negative and we have
  \begin{multline*}
    0 \le \tr_{\Lsqr {\Lambda,\ell}}
    \bigl[ 
       (H+\lambda)^{-1}_\Lambda -(H^{\Lambda,\Dir} +\lambda)^{-1}
    \bigr]\\
    \le \tr_{\Lsqr {\Lambda,\ell}}
    \bigl[
       f_\lambda(H^\Lambda) - f_\lambda(H^{\Lambda,\Dir})
    \bigr]
    \le \frac{2}{\lambda} d_{\max} |\bd \Lambda|,
  \end{multline*}
  using \Lem{ssf}, where $d_{\max}$ is a finite upper bound on the
  vertex degree of $X$, which exists due to the $\Gamma$-periodicity
  of $X$. Using the van Hove property~\eqref{eq:isop0} and the
  estimate
  \begin{equation*}
    \ell_{\min} \vol(\Lambda,\ell_0)
    \le \vol(\Lambda,\ell_\omega)
    \le \ell_{\max} \vol(\Lambda,\ell_0),
  \end{equation*}
  we conclude that
  \begin{equation} \label{eq:resdiff} 
  \lim_{n \to \infty} \frac{1}{\vol(\Lambda_n,\ell_\omega)} \left( \tr
    [(H_\omega+\lambda)^{-1}_{\Lambda_n}] - \tr [f_\lambda(H_\omega^{n,\Dir})]
  \right) = 0.
  \end{equation}
  Using additivity of the trace and the operator
  consistency~\eqref{eq:opcons}, we obtain
  \begin{equation*}
    \tr_{\Lsqr {\Lambda_n,\ell_\omega}}
    (H_\omega+\lambda)^{-1}_{\Lambda_n}
    = \sum_{\gamma \in I_n}
    \tr_{\Lsqr {\gamma \mc F,\ell_\omega}}
    (H_\omega+\lambda)^{-1}_{\gamma \mc F}
    = \sum_{\gamma \in I_n^{-1}} g_\lambda(\gamma \omega),
  \end{equation*}
  where
  \begin{equation}
    \label{eq:glambda}
    g_\lambda(\omega)
    = \tr_{\Lsqr {\mc F,\ell_\omega}}
       [(H_\omega+\lambda)^{-1}_{\mathcal F}]
    = \tr[\1_{\mathcal F} f_\lambda(H_\omega)].
  \end{equation}
  Since, by monotonicity~\eqref{eq:mon} and \Lem{trest},
  \begin{equation*}
    0 \le g_\lambda(\omega)
    \le \tr_{\Lsqr {\mc F,\ell_\omega}} 
           [(H_\omega^{\mathcal F}+\lambda)^{-1}]
    \le C_\lambda \vol(\mc F,\ell_0),
  \end{equation*}
  we conclude that $g_\lambda \in \Lp [1] \Omega$. Now, we argue as in
  the proof of Theorem 7 in~\cite{lpv:04}: Applying Lindenstrauss'
  ergodic theorem separately to both expressions
  \begin{equation*}
    \frac 1 {| I_n |} \sum_{\gamma \in I_n^{-1}} 
            g_\lambda(\gamma \omega)
            \qquad \text{and} \qquad
    \frac 1 {| I_n |} \sum_{\gamma \in I_n^{-1}} 
            \vol({\mathcal F},\ell_{\gamma \omega}),
  \end{equation*}
  we conclude that
  \begin{equation}
    \label{eq:limids}
    \lim_{n \to \infty} \frac{1}{\vol(\Lambda_n,\ell_\omega)} \tr
    [(H_\omega+\lambda)_{\Lambda_n}^{-1}]
    = \frac{1}{\Exp{\vol ({\mathcal
          F},\ell_\bullet)}} \Exp{\tr [\1_{\mathcal F}
      f_\lambda(H_\bullet)]}
  \end{equation}
  for almost all $\omega \in \Omega$.  The lemma follows now
  immediately from~\eqref{eq:resdiff} and~\eqref{eq:limids}.
\end{proof}

Let us denote by $\mathcal L$ the set of functions $\set{x \mapsto
  f_\lambda(x) = (x+\lambda)^{-1}}{\lambda > 0}$ and by $\mathcal A$
the $\norm[\infty]\cdot$-closure of the linear span of $\mathcal L$
and the constant function $\map \1 {[0,\infty)} \R$, $\1(x) = 1$. Note
that, by monotonicity~\eqref{eq:mon} and \Lem{trest}, both expressions
$\mu_\omega^n(f_1)$ and $\mu(f_1) = ( \Exp{ \vol({\mathcal
    F},\ell_\bullet) } )^{-1} \Exp {g_1}$ (with $g_1$ defined
in~\eqref{eq:glambda}) are bounded by a constant $K > 0$, independent
of $\omega$ and $n$. Let $\Omega_0 \subset \Omega$ be the set of full
$\Prob$-measure from \Lem{Konvergenz-Resolventen}.

\begin{lemma}
  \label{lem:Konvergenz-Abschluss}
  Let $\omega \in \Omega_0$. Set $ \nu^n = f_1 \cdot \mu_\omega^n$
  (for $n\in \N$) and $ \nu= f_1 \cdot \mu$. Then we have, for all $g
  \in {\mathcal A}$,
  \begin{equation*}
    \lim_{n\to\infty} \nu^n (g) = \nu(g).
  \end{equation*}
\end{lemma}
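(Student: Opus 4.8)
The plan is to reduce the statement to \Lem{Konvergenz-Resolventen} by a soft approximation argument, with no new probabilistic input. Writing $\mu_\omega^n(h) = \vol(\Lambda_n,\ell_\omega)^{-1}\tr_\omega[h(H_\omega^{n,\Dir})]$, the definitions $\nu^n = f_1\cdot\mu_\omega^n$ and $\nu = f_1\cdot\mu$ give $\nu^n(g) = \mu_\omega^n(f_1 g)$ and $\nu(g) = \mu(f_1 g)$ for every bounded $g$. Both $\nu^n$ and $\nu$ are \emph{positive} measures on $[0,\infty)$ whose total masses $\nu^n(\1) = \mu_\omega^n(f_1)$ and $\nu(\1) = \mu(f_1)$ are bounded by the constant $K$ fixed before the lemma (coming from monotonicity~\eqref{eq:mon} and \Lem{trest}). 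I would therefore first prove $\nu^n(g)\to\nu(g)$ for $g$ in a convenient set of generators whose uniform closure is $\mathcal A$, and then use this uniform mass bound to pass to all of $\mathcal A$.

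For the generators I work with $\mathcal D := \operatorname{span}\bigl(\{\1\}\cup\{f_\lambda : \lambda > 0,\ \lambda \neq 1\}\bigr)$. Fixing $\omega\in\Omega_0$, \Lem{Konvergenz-Resolventen} yields $\mu_\omega^n(f_\lambda)\to\mu(f_\lambda)$; to have this on a single full-measure set valid for all $\lambda$ simultaneously, one intersects the exceptional sets over a countable dense set of parameters and uses monotonicity of $\lambda\mapsto\mu_\omega^n(f_\lambda)$. For $g=\1$ the claim $\nu^n(\1)=\mu_\omega^n(f_1)\to\mu(f_1)=\nu(\1)$ is exactly \Lem{Konvergenz-Resolventen} at $\lambda=1$. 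For $g=f_\lambda$ with $\lambda\neq1$ the partial fraction identity
\begin{equation*}
  f_1(x) f_\lambda(x)
  = \frac{1}{(x+1)(x+\lambda)}
  = \frac{1}{1-\lambda}\bigl(f_\lambda(x) - f_1(x)\bigr)
\end{equation*}
reduces $\nu^n(f_\lambda)$ to a linear combination of $\mu_\omega^n(f_\lambda)$ and $\mu_\omega^n(f_1)$, so its limit is $\tfrac{1}{1-\lambda}\bigl(\mu(f_\lambda)-\mu(f_1)\bigr)=\mu(f_1 f_\lambda)=\nu(f_\lambda)$. By linearity, $\nu^n(g)\to\nu(g)$ for every $g\in\mathcal D$.

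The reason for excluding $\lambda=1$ is that $f_1 f_1 = (x+1)^{-2}$ is \emph{not} a resolvent and hence not covered by \Lem{Konvergenz-Resolventen}; this is the one delicate point, and it is the part I expect to require the most care. It is circumvented by observing that $\norm[\infty]{f_\lambda-f_1}=|1-\lambda|/\lambda\to0$ as $\lambda\to1$, so $f_1$ lies in the uniform closure of $\mathcal D$. Consequently the closure of $\mathcal D$ already contains $\{\1\}\cup\mathcal L$ and hence equals $\mathcal A$, and one never has to evaluate $\nu^n$ or $\nu$ on $f_1^2$.

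Finally I extend the convergence from $\mathcal D$ to $\mathcal A = \overline{\mathcal D}$ using the uniform mass bound. Given $g\in\mathcal A$ and $\eps>0$, choose $g_0\in\mathcal D$ with $\norm[\infty]{g-g_0}<\eps$ and estimate
\begin{equation*}
  |\nu^n(g)-\nu(g)|
  \le |\nu^n(g-g_0)| + |\nu^n(g_0)-\nu(g_0)| + |\nu(g_0-g)|
  \le 2K\eps + |\nu^n(g_0)-\nu(g_0)|,
\end{equation*}
where I used $|\nu^n(h)|\le K\norm[\infty]{h}$ and $|\nu(h)|\le K\norm[\infty]{h}$, valid since $\nu^n,\nu$ are positive measures of mass at most $K$. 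The middle term tends to $0$ by the previous step, so $\limsup_n|\nu^n(g)-\nu(g)|\le 2K\eps$, and letting $\eps\to0$ finishes the proof. All the spectral and ergodic content is already contained in \Lem{Konvergenz-Resolventen}; the only genuine obstacles here are the bookkeeping ones of securing a single null set valid for all $\lambda$ and of handling the escape of $f_1^2$ from the resolvent family, both settled by the density and uniform-mass arguments above.
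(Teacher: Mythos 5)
Your proof is correct and takes essentially the same route as the paper's: the partial-fraction identity $f_1 f_\lambda = \tfrac{1}{1-\lambda}(f_\lambda-f_1)$ (which you state with the correct sign; the paper's displayed version has a sign typo) reduces the generators to the resolvent traces covered by \Lem{Konvergenz-Resolventen}, and the uniform mass bound $K$ carries the convergence over to the closure $\mathcal A$. The only differences are organisational: you absorb the delicate case $g=f_1$ into the closure step by noting $f_1\in\overline{\mathcal D}$, where the paper first treats it separately via the uniform approximation $f_{1+\eps}\to f_1$, and you are more explicit than the paper about the fact that the full-measure set in \Lem{Konvergenz-Resolventen} a priori depends on $\lambda$ and must be handled by intersecting over a countable dense set of parameters.
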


\begin{proof}
  By \Lem{Konvergenz-Resolventen} we know that the statement
  holds for the function $g = \1$. We note that $f_\lambda \cdot f_1 =
  \frac{1}{\lambda-1}(f_\lambda-f_1)$ for $\lambda \neq 1$. Thus, by
  linearity and \Lem{Konvergenz-Resolventen}, the
  convergence holds also for all functions $g = f_\lambda$ with
  $\lambda >0$, $\lambda \neq 1$. To deal with the case $\lambda=1$
  note that $f_{1 +\eps}$ converges to $f_1$ uniformly, as $\eps \to
  0$. Thus
  \begin{equation*}
    |\nu^n(f_1) - \nu^n(f_{1 +\eps})|
    \le \| f_1-f_{1 +\eps}\|_\infty \, \nu^n(\1)
    \le K \eps.
  \end{equation*}
  An analogous statement holds for $\nu^n$ replaced by $\nu$. Thus
  \begin{equation}
    \label{eq:uniform-approximand}
    |\nu(f_1)-\nu^n(f_1)| \le 2 K \eps + \big| \nu(f_{1 +\eps})
    - \nu^n(f_{1 +\eps}) \big| \to 2 K \eps,
  \end{equation}
  as $n\to\infty$. Since $\eps > 0$ was arbitrary, we conclude that
  $\lim_{n\to\infty} \nu^n (f_1) = \nu(f_1)$. By linearity, the
  convergence statement of the Lemma holds for all functions $g$ in
  the linear span of $\mathcal L \cup \{ \1 \}$. To show that is holds
  for all functions in the closure $\mathcal A$, as well, one uses
  uniform approximation and an estimate of the same type as
  in~\eqref{eq:uniform-approximand}.
\end{proof}

The next lemma identifies the space $\mathcal A$ explicitly:

\begin{lemma}
  The function space $\mathcal A$ coincides with the set of continuous
  functions on $[0,\infty)$ which converge at infinity.
\end{lemma}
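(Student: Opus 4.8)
The plan is to recognise the target space as $C(K)$ for the one-point compactification $K=[0,\infty]$ and then invoke the Stone--Weierstrass theorem. A real-valued function on $[0,\infty)$ is continuous and possesses a limit at infinity precisely when it extends continuously to $K$; thus the set in question is exactly the real Banach algebra $C(K)$ under pointwise multiplication and the sup-norm $\|\cdot\|_\infty$. The first, routine, step is the inclusion $\mathcal A \subseteq C(K)$: each generator $f_\lambda$ extends continuously to $K$ with $f_\lambda(\infty)=0$, the constant $\1$ extends trivially, and $C(K)$ is closed under uniform limits, so the $\|\cdot\|_\infty$-closed linear span $\mathcal A$ lands inside $C(K)$.

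For the reverse inclusion I would apply Stone--Weierstrass, for which $\mathcal A$ must be verified to be a closed \emph{subalgebra} that contains the constants and separates the points of $K$. The constants are present by construction. Point separation is immediate: for distinct $x_1,x_2 \in [0,\infty)$ the strict monotonicity of $f_\lambda$ gives $f_\lambda(x_1) \neq f_\lambda(x_2)$, while $f_\lambda(x) > 0 = f_\lambda(\infty)$ separates any finite point from $\infty$. Granting the subalgebra property, Stone--Weierstrass then yields $\mathcal A = C(K)$, which is the asserted identification.

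The one point needing genuine care, and the main obstacle, is that $\mathcal A$, defined merely as a closed linear span, is actually closed under multiplication. Here the resolvent identity $f_\lambda f_\mu = (\mu-\lambda)^{-1}(f_\lambda - f_\mu)$ for $\lambda \neq \mu$ already places all off-diagonal products of generators in the linear span of $\mathcal L$, while the diagonal product $f_\lambda^2$ is the uniform limit of $f_\lambda f_\mu$ as $\mu \to \lambda$ (the difference being bounded by $|\lambda-\mu|/(\lambda^2\mu)$), hence lies in $\mathcal A$. Since multiplication is jointly continuous in $\|\cdot\|_\infty$, namely $\|fg - f_0 g_0\|_\infty \le \|f\|_\infty \|g - g_0\|_\infty + \|g_0\|_\infty \|f - f_0\|_\infty$, products of arbitrary elements of $\mathcal A$ are approximated uniformly by products of elements of the linear span, which by the previous remark already lie in $\mathcal A$; closedness of $\mathcal A$ then forces $fg \in \mathcal A$. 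The difficulty is thus not Stone--Weierstrass itself but this verification of the algebra structure, and in particular treating the diagonal case $\lambda=\mu$ with uniform rather than merely pointwise control.
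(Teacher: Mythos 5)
Your proof is correct and follows essentially the same route as the paper: identify the target space with $\Contsymb([0,\infty])$ for the one-point compactification, verify separation of points and the presence of constants, and establish the algebra property via the resolvent identity $f_{\lambda} f_{\mu} = (\mu-\lambda)^{-1}(f_{\lambda}-f_{\mu})$ together with a uniform approximation argument for the diagonal case $f_\lambda^2$ and for products of limit points. Your write-up merely makes explicit the quantitative bound $|\lambda-\mu|/(\lambda^2\mu)$ and the joint continuity of multiplication, which the paper leaves implicit.
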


\begin{proof}
  The statement of the lemma is equivalent to $\mathcal A=
  \Cont{[0,\infty]}$, where $[0,\infty]$ is the
  one-point-compactification of $[0,\infty)$. We want to apply the
  Stone-Weierstrass Theorem. Any $f_\lambda$ with $\lambda > 0$
  separates points and $\1$ is nowhere vanishing in $[0,\infty]$. By
  definition $\mathcal A$ is a linear space. To show that it is an
  algebra we use again the formula $f_{\lambda_1} \cdot f_{\lambda_2}
  =\frac{1}{\lambda_2 - \lambda_1}(f_{\lambda_1}-f_{\lambda_2})$,
  which shows that $f_{\lambda_1}\cdot f_{\lambda_2} \in {\mathcal A}$
  for $\lambda_1 \neq \lambda_2$. Since $\mathcal A$ is closed in the
  sup-norm, we can use an approximation as in the proof of the
  \Lem{Konvergenz-Abschluss} to show $f_\lambda^2 \in {\mathcal A}$. A
  similar argument shows that the product of two limit points $f,g$ of
  the linear span of ${\mathcal L} \cup \{ \1 \}$ is in $\mathcal A$.
 \end{proof}

 We have established the convergence $\mu_\omega^n(g) \to \mu(g)$ for
 all functions of the form $g \cdot f_1$ with $g \in {\mathcal A}$.
 The following lemma shows that this is sufficient to conclude the
 almost sure convergence $N_\omega^n (\lambda) \to N(\lambda)$ at
 continuity points $\lambda$, finishing the proof of \Thm{ids}. One
 has only to observe that every continuous function of compact support
 on $\R^+ = [0,\infty)$ can be written as $g \cdot f_1$, with an
 element $g \in {\mathcal A}$.

\begin{lemma}
  For $n \in \N$, let $\rho^n, \rho$ be locally finite measures on
  $\R^+$. Then
  \begin{equation*}
    \lim_{n\to\infty} \rho^n(g) = \rho(g)
  \end{equation*}
  for all continuous functions $g$ of compact support implies that
  \begin{equation*}
    \lim_{n\to\infty} \rho^n( [0,\lambda] )
    = \rho( [0,\lambda] )
  \end{equation*}
  for all $\lambda>0$ which are not atoms of $\rho$.
\end{lemma}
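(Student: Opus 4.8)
The plan is to use a standard squeezing argument: approximate the indicator $\1_{[0,\lambda]}$ from above and below by continuous functions of compact support, to which the hypothesis applies directly, and then close the gap using the non-atom condition. Fix $\lambda>0$ with $\rho(\{\lambda\})=0$. For small $\delta>0$ I would choose continuous functions $g_-,g_+$ on $\R^+$ with $0\le g_-\le \1_{[0,\lambda]}\le g_+\le 1$ such that $g_+\equiv 1$ on $[0,\lambda]$ with $\supp g_+\subset[0,\lambda+\delta]$, while $g_-\equiv 1$ on $[0,\lambda-\delta]$ with $\supp g_-\subset[0,\lambda]$ (for instance the obvious piecewise-linear interpolations). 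Both $g_\pm$ are continuous of compact support, so the hypothesis yields $\rho^n(g_\pm)\to\rho(g_\pm)$.

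First I would extract a two-sided bound on the limit. From $g_-\le\1_{[0,\lambda]}\le g_+$ we get $\rho^n(g_-)\le\rho^n([0,\lambda])\le\rho^n(g_+)$ for every $n$, and combining this with the monotonicity of $\rho$ applied to $g_-\ge\1_{[0,\lambda-\delta]}$ and $g_+\le\1_{[0,\lambda+\delta]}$ gives
\[
\rho([0,\lambda-\delta])\le\lim_n\rho^n(g_-)\le\liminf_n\rho^n([0,\lambda])\le\limsup_n\rho^n([0,\lambda])\le\lim_n\rho^n(g_+)\le\rho([0,\lambda+\delta]).
\]

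It then remains to let $\delta\to0$. Since $\rho$ is locally finite, $\rho([0,\lambda+1])<\infty$, so continuity of the measure from above yields $\rho([0,\lambda+\delta])\to\rho([0,\lambda])$, while continuity from below yields $\rho([0,\lambda-\delta])\to\rho([0,\lambda))$. This is the only point where the hypothesis on $\lambda$ enters: because $\lambda$ is not an atom, $\rho([0,\lambda))=\rho([0,\lambda])$. Hence both the lower and the upper bound above converge to the single value $\rho([0,\lambda])$, which forces $\lim_n\rho^n([0,\lambda])=\rho([0,\lambda])$, as claimed.

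I do not expect a genuine obstacle here; the argument is routine measure theory. The one step that must be handled with care is the passage $\delta\to0$, which relies on continuity of $\rho$ from above — requiring finiteness of $\rho$ on a neighbourhood of $[0,\lambda]$, exactly what local finiteness provides — together with the non-atom hypothesis to identify the one-sided limit $\rho([0,\lambda))$ with $\rho([0,\lambda])$. Everything else is bookkeeping with the approximating functions $g_\pm$.
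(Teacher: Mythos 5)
Your proposal is correct and follows essentially the same route as the paper: both sandwich $\1_{[0,\lambda]}$ between compactly supported continuous functions $g_\eps^\pm$ with $\1_{[0,\lambda-\eps]} \le g_\eps^- \le \1_{[0,\lambda]} \le g_\eps^+ \le \1_{[0,\lambda+\eps]}$, apply the hypothesis to $g_\eps^\pm$, and use local finiteness together with the non-atom condition to make $\rho([\lambda-\eps,\lambda+\eps])$ small. The paper merely packages the final limit as an explicit $\liminf$/$\limsup$ estimate rather than invoking continuity of the measure from above and below, which is a cosmetic difference.
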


\begin{proof}
  The proof is standard. First note that locally finiteness of $\rho$
  implies 
  \begin{equation*}
    \lim_{\eps \to 0} \rho([\lambda-\eps,\lambda +\eps])
    = \rho(\{\lambda\}) = 0.
  \end{equation*}
  Now choose monotone functions $g_\eps^-, g_\eps^+ \in \Contc{\R^+}$
  satisfying
  \begin{equation*}
    \1_{[0,\lambda -\eps]} 
    \le g_\eps^-
    \le \1_{[0,\lambda]}
    \le g_\eps^+
    \le \1_{[0,\lambda +\eps]}.
  \end{equation*}
  Then
  \begin{align*}
    \rho([0,\lambda]) - \rho^n([0,\lambda]) &\le \rho(g_\eps^+) -
    \rho(g_\eps^-) + \rho(g_\eps^-) -
    \rho^n(g_\eps^-) \\
    &\le \rho([\lambda-\eps,\lambda +\eps]) + \rho(g_\eps^-) -
    \rho^n(g_\eps^-).
  \end{align*}
  For any $\delta >0$ one can choose $\eps >0$ such that
  $\rho([\lambda-\eps,\lambda +\eps]) < \delta$. Since $\delta>0$ was
  arbitrary, we have shown $\rho([0,\lambda]) \le \liminf_{n\to\infty}
  \rho^n([0,\lambda])$.  The opposite inequality is shown similarly.
\end{proof}

\section{Proof of the Wegner estimate}
\label{sec:wegner}

This section is devoted to the proof of \Thm{wegner}. Let
$(X,\Omega,\Prob,\ell)$ be a random length model satisfying
\Ass{wegner}. We first introduce a new measurable map $\map \alpha
{\Omega \times E} {[\omega_-,\omega_+]}$ with $\omega_- = \ln
\ell_{\min}$, $\omega_+ = \ln \ell_{\max}$, defined by
$\alpha_\omega(e) := \alpha(\omega,e) = \ln \ell_\omega(e)$. The
random variables $\alpha(\cdot,e)$, $e \in E$, are independently
distributed with density functions $g_e(x) = \e^x h_e(\e^x)$, and we
have
\begin{equation}
  \label{eq:tihediff}
  \norm[\infty] {g_e'} \le
  \ell_{\max} \norm[\infty]{h_e}
  + \ell_{\max}^2 \norm[\infty]{h_e'}
  \le (\ell_{\max} + \ell_{\max}^2) C_h
  =: D_h < \infty.
\end{equation}
Thus, we can re-identify $\Omega$ with the Cartesian product $\prod_{e
  \in E} [\omega_-,\omega_+]$, and the maps $\alpha(\cdot,e)$ are
simply projections to the component with index $e$. The measure
$\Prob$ is now given as the product $\bigotimes_{e \in E} \wt \Prob_e$
of marginal measures $\wt \Prob_e$ with density functions $g_e \in
\Cont[1] \R$ satisfying the above estimate~\eqref{eq:tihediff}. The
advantage of the new ``rescaled'' identification $\Omega = \prod_{e
  \in E} [\omega_-,\omega_+]$ is the following property of the
eigenvalues of the Laplacian on any compact subgraph
$(\Lambda,\ell_\omega)$:
\begin{equation}
  \label{eq:lambdai.scal}
  \lambda_i(\Delta_{\omega+s \1}^{\Lambda,\Dir})
  = \e^{-2 s}\lambda_i(\Delta_{\omega}^{\Lambda,\Dir}).
\end{equation}
Here, the eigenvalues $\lambda_i$ are counted with multiplicity and
$\omega + s \1$ denotes the element $\{\omega_e + s\}_{e \in
  E(X)} \in \Omega$.  Property~\eqref{eq:lambdai.scal} is an
immediate consequence of~\eqref{eq:D.resc}, $\ell_{\omega+s \1}(e) =
\e^{\alpha_\omega(e)+s} = \e^s \ell_\omega(e)$, and the fact that a
rescaling of all lengths by a fixed multiplicative constant does not
change the domain of the Kirchhoff Laplacian with Dirichlet boundary
conditions on $\bd \Lambda$. Property~\eqref{eq:lambdai.scal} is of
crucial importance for the proof of the Wegner estimate.

Henceforth, we use this new interpretation of $\Omega$ and rename
$\wt \Prob_e$ by $\Prob_e$, for simplicity. 

Let $\Lambda \subset X$ be a compact topological subgraph, $\lambda
\in \R$ and $\eps > 0$. We write the interval $I$ as
$[\lambda-\epsilon,\lambda+\epsilon]$ and start with a smooth function
$\map \rho \R {[-1,0]}$ satisfying $\rho \equiv -1$ on
$(-\infty,-\eps]$, $0 \le \rho' \le 1/\eps$, $\rho \equiv 0$ on
$[\eps,\infty)$.  Moreover, we set $\rho_\lambda(x) =
\rho(x-\lambda)$. Then we have
\begin{equation*}
  \1_{[\lambda-\eps,\lambda+\eps]}(x) \le
    \rho_\lambda(x+2\eps) -
    \rho_\lambda(x-2\eps) = \int_{-2\eps}^{2\eps} \rho_\lambda'(x+t) 
\dd t.
\end{equation*}
Using the spectral theorem, we obtain
\begin{equation*}
  P^{\Lambda,\Dir}_\omega([\lambda-\eps,\lambda+\eps])
  =
    \1_{[\lambda-\eps,\lambda+\eps]}(\Delta_\omega^{\Lambda,\Dir})
    \le \int_{-2\eps}^{2\eps}
        \rho_\lambda'(\Delta_\omega^{\Lambda,\Dir}+t) \dd t,
\end{equation*}
and, consequently,
\begin{equation*}
  \tr P^{\Lambda,\Dir}_\omega([\lambda-\eps,\lambda+\eps])
 \le
   \int_{-2\eps}^{2\eps} \tr
   \rho_\lambda'(\Delta_\omega^{\Lambda,\Dir}+t) \dd t.
 \end{equation*}
 Denote by $(\Omega(\Lambda),\Prob_\Lambda)$ the space
 $\Omega(\Lambda) = \prod_{e \in E(\Lambda)} [\omega_-,\omega_+]$ with
 probability measure $\Prob_\Lambda = \bigotimes_{e \in E(\Lambda)}
 \Prob_e$, and $\Exp[\Lambda] \cdot$ denote the associated
 expectation. $\Exp \cdot$ means expectation with respect to the full
 space $(\Omega,\Prob)$.  Applying expectation yields
\begin{multline}
   \label{eq:exptr} 
  \Exp {\tr
    P^{\Lambda,\Dir}_\bullet([\lambda-\eps,\lambda+\eps])}
  = \Exp[\Lambda] 
      {\tr P^{\Lambda,\Dir}_\bullet([\lambda-\eps,\lambda+\eps])}\\
  \le \int_{\Omega(\Lambda)}
  \int_{-2\eps}^{2\eps}
    \tr \rho_\lambda'(\Delta_\omega^{\Lambda,\Dir}+t) \dd t
     \dd \Prob_\Lambda(\omega).
\end{multline}
Using the chain rule and scaling property~\eqref{eq:lambdai.scal}, we obtain
\begin{multline*}
  \sum_{e \in E(\Lambda)} \frac{\partial}{\partial \omega_e}
  \rho_\lambda(\lambda_i(\Delta_\omega^{\Lambda,\Dir})+t)
  = \rho_\lambda'(\lambda_i(\Delta_\omega^{\lambda,\Dir})+t) \, 
       \frac d{ds}\Bigl\vert_{s = 0}\Bigr. 
       \bigl(
         s \mapsto \lambda_i(\Delta_{\omega+s \1}^{\Lambda,\Dir}) 
       \bigr)\\  
  = -2 \rho_\lambda'(\lambda_i(\Delta_\omega^{\lambda,\Dir})+t)
  \lambda_i(\Delta_{\omega}^{\Lambda,\Dir}) \le 0.
\end{multline*}
Now, we use that $[\lambda-\eps,\lambda+\eps] \subset J_u = [1/u,u]$.
Since $\supp \rho_\lambda' \subset [\lambda-\eps,\lambda+\eps]$, we
derive
\begin{equation} \label{eq:rhopest} 0 \le
  \tr \rho_\lambda'(\Delta_\omega^{\Lambda,\Dir}+t) \le -
  \frac u 2 \Bigl(\sum_{e \in E(\Lambda)} 
                    \frac{\partial}{\partial \omega_e}
                    \tr \rho_\lambda(\Delta_\omega^{\Lambda,\Dir}+t)
            \Bigr).
\end{equation}

For $e \in E(\Lambda)$, denote by $\Lambda_e$ the topological subgraph
with vertex set $V_e := V(\Lambda)$ and edge set $E_e := E(\Lambda)
\setminus \{ e \}$. Using the estimate~\eqref{eq:rhopest}, we obtain
from~\eqref{eq:exptr}
\begin{multline}
  \label{eq:expcalc}
  \Exp {\tr P_\bullet^{\Lambda,\Dir}([\lambda-\eps,\lambda+\eps])}\\
  \le - \frac{u}{2} \sum_{e\in E(\Lambda)} \int_{\Omega(\Lambda_e)}
    \int_{-2\eps}^{2\eps} \int_{\omega_-}^{\omega_+}
    \Bigl(
      \frac \partial {\partial \omega_e}
      \tr \rho_\lambda(\Delta_{(\omega',x)}^{\Lambda,\Dir} + t)
    \Bigr)
    g_e(x) \dd x \dd t \dd \Prob_{\Lambda_e}(\omega')
\end{multline}
with $(\omega',x) \in \Omega(\Lambda_e) \times [\omega_-,\omega_+] =
\Omega(\Lambda)$. Next, we want to carry out partial integration with respect
to $x$ in~\eqref{eq:expcalc}. Before doing so, it is useful to observe, for
fixed $c \in [\omega_-,\omega_+]$,
\begin{equation}
  \label{eq:lapldiff}
  \frac{\partial}{\partial \omega_e}
  \tr \rho_\lambda(\Delta_{(\omega',x)}^{\Lambda,\Dir} + t) =
  \frac{\partial}{\partial \omega_e} \left( 
    \tr \rho_\lambda(\Delta_{(\omega',x)}^{\Lambda,\Dir} + t) -
    \tr \rho_\lambda(\Delta_{(\omega',c)}^{\Lambda,\Dir} + t)
  \right).
\end{equation}
Using~\eqref{eq:lapldiff} and applying partial integration, we obtain
\begin{multline}
  \label{eq:expcalc2}
  \Bigabs{
    \int_{\omega_-}^{\omega_+} 
         \Bigl( \frac \partial {\partial \omega_e} 
           \tr \rho_\lambda(\Delta_{(\omega',x)}^{\Lambda,\Dir} + t)
         \Bigr) g_e(x) \dd x }\\
  \le \norm[\Lsymb^1]{g_e'} \sup_{c' \in [\omega_-,\omega_+]}
  \bigabs{\tr \rho_{\lambda-t}(\lapl[\Lambda,\Dir]{(\omega',c')}) - \tr
    \rho_{\lambda-t}(\lapl[\Lambda,\Dir]{(\omega',c)}) }.
\end{multline}

For notational convenience, we identify the compact topological graph
consisting only of the edge $e$ and its end-points with $e$, and we
denote by $\Delta_c^{e,\Dir}$ be the Dirichlet-Laplacian on the metric
graph $(e,\ell_c)$ defined by $\ell_c(e) = \exp(c)$.
Using~\eqref{eq:ssftrineq} in \Lem{ssf}, we conclude that
\begin{equation*}
  \bigabs{\tr \rho_{\lambda-t}(\lapl[\Lambda,\Dir]{(\omega',c)}) - 
      \tr \rho_{\lambda-t}(\lapl[\Lambda_e,\Dir]{\omega'} 
              \oplus \lapl[e,\Dir]{c})}
  \le  2\, | \rho(\infty) - \rho(t-\lambda) |\, 2 d_{\max} 
  \le 4 d_{\max},
\end{equation*}
for all values $c \in [\omega_-,\omega_+]$. Consequently, $\sup
\bigabs{\tr \rho_{\lambda-t}(\lapl[\Lambda,\Dir]{(\omega',c')}) - \tr
  \rho_{\lambda-t}(\lapl[\Lambda,\Dir]{(\omega',c)}) }$
in~\eqref{eq:expcalc2} can be estimated from above by
\begin{equation*}
  8 d_{\max} + \bigabs{\tr \rho(\lapl[e,\Dir]{c'} +t-\lambda) - \tr
  \rho(\lapl[e,\Dir]{c} +t-\lambda)}.
\end{equation*}
Note that all eigenfunctions of the Dirichlet operator $\lapl[e,\Dir]{c}$ are
explicitly given sine functions. Therefore, since $\lambda \in
[1/u+\eps,u-\eps]$ and $t \in [-2\eps,2\eps]$, there is a constant
$C_{u,\ell_{\max}} > 0$, depending only on $u,\ell_{\max}$, such that
\begin{equation*}
  \bigabs{\tr \rho(\lapl[e,\Dir]{c} +t-\lambda)} \le C_{u,\ell_{\max}},
\end{equation*}
for all $\exp(c) \in [\ell_{\min},\ell_{\max}]$. This implies
\begin{equation*}
  \Bigabs{
    \int_{\omega_-}^{\omega_+} \left( \frac{\partial}{\partial
        \omega_e} \tr \rho_\lambda(\Delta_{(\omega',x)}^{\Lambda,\Dir} +
      t) \right) g_e(x) \, dx } \le (8 d_{\max} + 2
  C_{u,\ell_{\max}})\, \norm[{\Lp[1]{[\omega_-,\omega_+]}}] {g_e'}.
\end{equation*}
Plugging this into inequality~\eqref{eq:expcalc}, we finally obtain
\begin{equation*}
  \Exp {\tr P_\bullet^{\Lambda,\Dir}([\lambda-\eps,\lambda+\eps])}
  \le u\, (4 d_{\max} + C_{u,\ell_{\max}})\, D_h\, \ln
  \frac{\ell_{\max}}{\ell_{\min}}\, 4 \eps\, \abs{E(\Lambda)},
\end{equation*}
finishing the proof of \Thm{wegner}.

\newcommand{\etalchar}[1]{$^{#1}$}
\providecommand{\bysame}{\leavevmode\hbox to3em{\hrulefill}\thinspace}
\renewcommand{\MR}[1]{}
\renewcommand{\MRhref}[2]{}
\providecommand{\href}[2]{#2}


\end{document}